% !TeX spellcheck = en_GB
\documentclass[11pt]{article}
\def\thetitle{The wired  minimal spanning forest on \\the Poisson-weighted infinite tree}

%%%%% ----- Symbols, Colours, Links ----- %%%%%
% AMS math packages
% - amsmath: environment such as equation*
% - amssymb: symbols such as \mathbb
\usepackage{amsmath,amssymb}
\usepackage{stackrel}
\usepackage{mathrsfs}
% Colour package
\usepackage[usenames,dvipsnames,svgnames,table]{xcolor}
\definecolor{CombinatoricaAqua}{HTML}{00698C}
\definecolor{CombinatoricaBlue}{HTML}{3A3293}
\definecolor{CombinatoricaBrown}{HTML}{66220C}
\definecolor{CombinatoricaRed}{HTML}{DF2A27}
\definecolor{HarvardCrimson}{rgb}{0.6471, 0.1098, 0.1882}

% Colour equation numbers
\makeatletter
\let\reftagform@=\tagform@	
\def\tagform@#1{\maketag@@@
	{(\ignorespaces\textcolor{CombinatoricaBrown}{#1}\unskip\@@italiccorr)}}
\renewcommand{\eqref}[1]{\textup{\reftagform@{\ref{#1}}}}
\makeatother

% This creates hyperlinks, but does not colour them with annoying boxes
% The backref=page option must be given at the beginning
\usepackage[backref=page]{hyperref}
\hypersetup{
	unicode,
	pdfencoding=auto,
	pdfauthor={Asaf Nachmias and Pengfei Tang},
	pdftitle={\thetitle},
	pdfsubject={},
	pdfkeywords={},
	colorlinks=true,
	citecolor=CombinatoricaBlue,
	linkcolor=CombinatoricaAqua,
	anchorcolor=CombinatoricaBrown,
	urlcolor=HarvardCrimson}

%%%%% ----- Theorems ----- %%%%%
\usepackage{amsthm}
\usepackage{thmtools}
\usepackage{bbm}
\usepackage{enumitem}
% References to theorems; needs to be called before declarations
\usepackage[capitalize]{cleveref}
\Crefname{fact}{Fact}{Facts}
\Crefname{claim}{Claim}{Claims}
%\Crefname{assumption}{Assumption}{Assumptions}

% remove a spurious warning (caused by \cref in section names)
\makeatletter
\pdfstringdefDisableCommands{\let\cref\@firstofone}
\makeatother

% Colour theorem headings
% Style for Theorem, Lemma, Corollary, Proposition, Conjecture, Criterion and 
% Assertion
% We also add Claim to this list
\declaretheoremstyle[
spaceabove=\topsep, spacebelow=\topsep,
headfont=\color{CombinatoricaBrown}\normalfont\bfseries,
bodyfont=\itshape,
]{thm}
% Style for Definition, Condition, Problem, Example, Exercise, Algorithm,
% Question, Axiom, Property, Assumption and Hypothesis
\declaretheoremstyle[
spaceabove=\topsep, spacebelow=\topsep,
headfont=\color{CombinatoricaBrown}\normalfont\bfseries,
bodyfont=\normalfont,
]{dfn}
% Style for Remark, Note, Notation, Claim, Summary, Acknowledgement, Case and
% Conclusion
% We remove Claim from this list
\declaretheoremstyle[
spaceabove=0.5\topsep, spacebelow=0.5\topsep,
%headfont=\color{CombinatoricaBrown}\normalfont\itshape,
headfont=\color{CombinatoricaBrown}\normalfont\bfseries,
bodyfont=\normalfont,
]{rmk}

% Theorem declarations
\declaretheorem[style=thm,parent=section]{theorem}
\declaretheorem[style=thm,sibling=theorem]{lemma}

\declaretheorem[style=thm,sibling=theorem]{observation}

\declaretheorem[style=thm,sibling=theorem]{proposition}
\declaretheorem[style=rmk,sibling=theorem]{remark}

\declaretheorem[style=definition,numbered=no]{acknowledgements}
\declaretheorem[style=definition,sibling=theorem]{definition}

%%%%% ----- References ----- %%%%%
\usepackage[nobysame,msc-links]{amsrefs}

% https://tex.stackexchange.com/a/429042/46188

% Book titles should appear in bold
\BibSpec{book}{%
	+{}  {\PrintPrimary}                {transition}
	+{,} { \textbf}                     {title} % was \textit
	+{.} { }                            {part}
	+{:} { \textit}                     {subtitle}
	+{,} { \PrintEdition}               {edition}
	+{}  { \PrintEditorsB}              {editor}
	+{,} { \PrintTranslatorsC}          {translator}
	+{,} { \PrintContributions}         {contribution}
	+{,} { }                            {series}
	+{,} { \voltext}                    {volume}
	+{,} { }                            {publisher}
	+{,} { }                            {organization}
	+{,} { }                            {address}
	+{,} { \PrintDateB}                 {date}
	+{,} { }                            {status}
	+{}  { \parenthesize}               {language}
	+{}  { \PrintTranslation}           {translation}
	+{;} { \PrintReprint}               {reprint}
	+{.} { }                            {note}
	+{.} {}                             {transition}
	+{}  {\SentenceSpace \PrintReviews} {review}
}

% Getting rid of the Oxford comma
\makeatletter
\renewcommand{\PrintNames@a}[4]{%
	\PrintSeries{\name}
	{#1}
	{}{ and \set@othername}
	{,}{ \set@othername}
	{}{ and \set@othername}
	{#2}{#4}{#3}%
}
\makeatother

% Math color
% https://tex.stackexchange.com/a/261480/46188
\makeatletter
\def\mathcolor#1#{\@mathcolor{#1}}
\def\@mathcolor#1#2#3{%
	\protect\leavevmode
	\begingroup
	\color#1{#2}#3%
	\endgroup
}
\makeatother
\definecolor{Red}{rgb}{0.618,0,0}
\definecolor{Blue}{rgb}{0,0,1}
\definecolor{Green}{rgb}{0,0.298,0}

\newcommand{\eps}{\varepsilon}

%%%%% ----- Header Styling ----- %%%%%
\usepackage{sectsty}
\sectionfont{\color{CombinatoricaBrown}}
\subsectionfont{\color{CombinatoricaBrown}}
\subsubsectionfont{\color{CombinatoricaBrown}}

%%%%% ----- Debug Mode ----- %%%%%
\usepackage{soul}
\soulregister\ref7
%\usepackage{showlabels}

% Show the labels in the print
%\usepackage[inline]{showlabels}
%\renewcommand{\showlabelfont}{\tiny\color{black!50}}
\renewcommand{\P}{\mathbb{P}}

% TODOs
\usepackage[textsize=tiny,backgroundcolor=black!10]{todonotes}
\presetkeys{todonotes}{noline}{}

\usepackage{pifont}

%%%%% ----- Geometry ----- %%%%%
% Double circle canon
\usepackage[a4paper]{geometry}
\geometry{
	a4paper,
	left=25.63mm,
	right=25.63mm,
	top=36.25mm,
	bottom=36.25mm
}
%\parindent 0in
%\parskip 1ex

%%%%% ----- Title ----- %%%%%
\title{\thetitle}
\author{Asaf Nachmias \and Pengfei Tang}

% Named labels
% Taken from: https://texblog.org/2012/03/21/cross-referencing-list-items/
\makeatletter
\def\namedlabel#1#2{\begingroup
  #2%
  \def\@currentlabel{#2}%
  \phantomsection\label{#1}\endgroup
}
\makeatother

%%%%% ----- Commands ----- %%
\usepackage{mleftright}
\mleftright

% General

% Fonts
%\newcommand{\NN}{\mathbb{N}}
%\newcommand{\RR}{\mathbb{R}}

% Functions

\newcommand{\din}{\ensuremath{\mathrm{d}}} %% d in integral

% Sets

\newcommand{\iact}{\mathcal{I}}

% Graphs
\newcommand{\scom}{\mathscr{C}}

\DeclareMathOperator{\dist}{dist}

\newcommand{\Gb}{\mathcal{G}_\bullet}
\newcommand{\Gs}{\mathcal{G}_\star}
\newcommand{\Gss}{\mathcal{G}_{\star\star}}

% Probability

%\DeclareMathOperator{\proband}{and}
%\DeclareMathOperator{\probor}{or}
%\DeclareMathOperator{\loops}{loops}
%\newcommand{\pand}[0]{\ \proband \ }
%\newcommand{\por}[0]{\ \probor \ }
%\newcommand{\whp}[0]{\textbf{whp}}
%\newcommand{\dtv}{d_{\mathrm{TV}}}
%\newcommand{\ind}{\mathbf{1}}

% Random walks

\newcommand{\reff}{\mathcal{R}_{\mathrm{eff}}}

%\newcommand{\tmix}{t_{\mathrm{mix}}}
%\newcommand{\trel}{t_{\mathrm{rel}}}
%\newcommand{\thit}{t_{\mathrm{hit}}}
%\newcommand{\target}{t_{\odot}}
%\newcommand{\LE}{\mathsf{LE}}
%\newcommand{\CT}{\mathsf{CT}}
%\newcommand{\CP}{\mathsf{CP}}

% DRAFT

% Probability distributions

% Paper specific
%\newcommand{\sun}[1]{#1^*}
%\newcommand{\bub}{\cB}
%\DeclareMathOperator{\pbub}{bub}
%\newcommand{\fut}{\mathcal{F}}
%\newcommand{\past}{\mathcal{P}}
%\newcommand{\height}{\mathfrak{h}}
%\DeclareMathOperator{\Vol}{Vol}
%\let\Cap\relax
%\DeclareMathOperator{\Cap}{Cap}
%\DeclareMathOperator{\Close}{Close}
%\newcommand{\ball}{\mathfrak{B}}
%\newcommand{\weight}{\mathbf{w}}
%\newcommand{\Weight}{\mathbf{W}}
%\newcommand{\partition}{\mathbf{Z}}
%\newcommand{\lcevent}{F^{\star}}
%\newcommand{\scevent}{F_{\star}}

\usepackage{nicefrac}
\newtheorem{example}[theorem]{Example}

\newcommand{\be}{\begin{equation}}
	\newcommand{\ee}{\end{equation}}
\newcommand\ba{\begin{align}}
	\newcommand\ea{\end{align}}

\usepackage[refpage,noprefix]{nomencl}

\makenomenclature

\AtEndDocument{\bigskip{\footnotesize%

\par

\textsc{Asaf Nachmias} \par
\textsc{School of Mathematical Sciences, Tel Aviv University, Tel Aviv 69978, Israel} \par
\textit{Email:} 
\href{mailto:asafnach@tauex.tau.ac.il}{\texttt{asafnach@tauex.tau.ac.il}} \par
\addvspace{\medskipamount}

\par

\textsc{Pengfei Tang} \par
\textsc{Center for applied mathematics, Tianjin University, Tianjin, China} \par
\textit{Email:}
\href{mailto:pengfei\_tang@tju.edu.cn}{\texttt{pengfei\_tang@tju.edu.cn}} \par
\addvspace{\medskipamount}

}}

\begin{document}
	\date{}
\maketitle

%%%%%%%%%%%%%%%%%%%%%%%%%%%%%%%%%%%%%%%%%%%%%%%%%%%%%%%%%%%%%%%%%%%%%%%%%%%%%%%
% Abstract
\begin{abstract}
	We study the spectral and diffusive properties of the wired minimal spanning forest (WMSF) on the Poisson-weighted infinite tree (PWIT). Let  $M$ be the tree containing the root  in the WMSF on the PWIT 	and $(Y_n)_{n\geq0}$ be a simple random walk on $M$ starting from the root. We show that almost surely $M$ has $\mathbb{P}[Y_{2n}=Y_0]=n^{-3/4+o(1)}$ and $\dist(Y_0,Y_n)=n^{1/4+o(1)}$ with high probability. That is, the spectral dimension of $M$ is $\nicefrac{3}{2}$ and its typical displacement exponent is $\nicefrac{1}{4}$, almost surely. These confirm Addario-Berry's predictions \cite{Addario2013local_limit_MST_complete_graphs}. %We also point out that the WMSF on the PWIT is universal in the sense that it is the local limit of the minimal spanning tree on any finite, simple, connected, regular graph sequence with degree tending to infinity. 
\end{abstract}

%%%%%%%%%%%%%%%%%%%%%%%%%%%%%%%%%%%%%%%%%%%%%%%%%%%%%%%%%%%%%%%%%%%%%%%%%%%%%%%

%%%%%%%%%%%%%%%%%%%%%%%%%%%%%%%%%%%%%%%%%%%%%%%%%%%%%%%%%%%%%%%%%%%%%%%%%%%%%%%
\section{Introduction and main results}\label{sec:intro:new} 

%\subsection{Motivation}

The minimal spanning tree (MST) is a fundamental combinatorial optimization problem that has attracted the attention of the computer science, combinatorics and probability communities. We refer the reader to the lovely introduction presented in Section 1 of \cite{Addario-Berry_etal2017_scaling_limit}. 

A classical problem of a probabilistic flavour is the MST on the complete graph with random edge weights. Assign to each edge of the complete graph on $n$ vertices an i.i.d.~random positive weights drawn according to an atomless distribution. The MST is then the almost sure unique spanning tree of minimal total edge weights. 
Several central results on the MST in this setting are as follows: Frieze \cite{Frieze1985value_MST} found the limiting value of its total edge weight; Addario-Berry, Broutin and Reed \cite{Addario-Berry_Broutin_Reed2009diamater_MST} proved that its diameter is of order $n^{1/3}$ with high probability; its scaling limit was identified by Addario-Berry, Broutin, Goldschmidt and Miermont \cite{Addario-Berry_etal2017_scaling_limit} and its local weak limit was identified in \cite{Aldous_Steele2004objective_method,Addario2013local_limit_MST_complete_graphs}. In this paper we are concerned mostly with the last result.% mentioned. 

In \cite{Addario2013local_limit_MST_complete_graphs}, Addario-Berry identifies the local limit of the MST on the complete graph to be the component of the root of the \emph{wired minimal spanning forest} (WMSF) on the \emph{Poisson-weighted infinite tree} (PWIT), see the precise definitions in \cref{sec: intro: MSF on PWIT}. In fact this result follows from an earlier more general result of Aldous and Steele \cite[Theorem 5.4]{Aldous_Steele2004objective_method}. The latter shows in particular that the local limit of the MST on any regular graph sequence with degree tending to $\infty$ is again the component of the root of the WMSF on the PWIT. Both \cite{Aldous_Steele2004objective_method} and \cite{Addario2013local_limit_MST_complete_graphs} prove convergence results under a \emph{weighted} topology which does not imply convergence in the more common unweighted topology. In \cref{thm: unweight local limit for regular graph sequence} presented in \cref{sec: local limit thm for MSTs} we point out how to obtain such a convergence. For the main results of this paper this is not relevant as we work directly on the limiting object.

Due to the above, the WMSF on the PWIT is a \emph{universal} limiting object and its behaviour provides information about the local structure of the MST on a large class of finite graphs. Addario-Berry \cite{Addario2013local_limit_MST_complete_graphs} showed that the volume growth rate of the component of the root of the WMSF on the PWIT is cubic (up to lower order fluctuations) and predicted that almost surely its spectral dimension is $\nicefrac{3}{2}$ and that the random walk's typical displacement exponent is $\nicefrac{1}{4}$, see the last paragraph of Section 1.1 in \cite{Addario2013local_limit_MST_complete_graphs}. The main results of this paper confirm these predictions.

%As the universal local limit of MSTs on many sequences of graphs, it is beneficial to study the limit object $M$ itself.  It is also   showed in \cite{Addario2013local_limit_MST_complete_graphs} that $M$ has cubic volume growth, up to lower order fluctuations and predicted that for simple random walk $(Y_n)_{n\geq0}$ on $M$ starting from the root, one should have $\mathbb{P}[Y_{2n}=Y_0]=n^{-3/4+o(1)}$ and  $d(Y_0,Y_n)=n^{1/4+o_p(1)}$ (see the last paragraph of Section 1.1 in \cite{Addario2013local_limit_MST_complete_graphs}). 

%Our main results confirm  Addario-Berry's predictions.  We also show that the upper bound on the volume growth is cubic  up to logarithmic corrections, improving the correction term in \cite{Addario2013local_limit_MST_complete_graphs} and provide a simpler proof for the  lower bound of volume growth.  

\subsection{The WMSF on the PWIT}\label{sec: intro: MSF on PWIT}
Let $U$ be the Ulam--Harris tree, that is, the tree with vertex set $V(U)=\bigcup_{k\geq0}\mathbb{N}^k$ ($\mathbb{N}^0=\{\emptyset\}$), in which for each $k\geq 1$ and each vertex $v=(n_1,\ldots,n_k)\in\mathbb{N}^k$ we form an edge between $v$ and its parent $(n_1,\ldots,n_{k-1})$.
\nomenclature[U]{$U$}{The Ulam-Harris Tree}
\nomenclature[Uow]{$(U,\emptyset,W)$}{The Poisson-weighted infinite tree (PWIT)}
For each $v=(n_1,\ldots,n_k)\in V(U)$, let $(w_i,i\geq1)$ be an independent homogeneous rate one Poisson point process on $[0,\infty)$. For each $i\geq1$ we assign the weight $w_i$ to the edge connecting $v$ to its child $(n_1,\ldots,n_k,i)$. We write $W=\{W(e)\colon e\in E(U)\}$ for these weights on $U$ and root the tree at $\emptyset$. The resulting rooted weighted  tree $(U,\emptyset,W)$ is known as the {\bf Poisson-weighted infinite tree} (PWIT).

%the weight $W((v,v'))=w_i$. Writing $W=\{W(e)\colon e\in E(U)\}$ and rooting the tree $U$ at $\emptyset$, we get a  random rooted weighted graph $\mathbf{U}=(U,\emptyset,W)$, called the Poisson-weighted infinite tree (PWIT).

Given a connected graph $G=(V,E)$ and weight function $w:E\to (0,\infty)$ the {\bf wired minimal spanning forest} (WMSF) is obtained from $G$ by deleting any edge with a label that is maximal in an \emph{extended} cycle it belongs to, that is, either a cycle or two disjoint infinite simple paths starting from a vertex. %at the endpoints of the edge. 

It will be convenient for us to work with an equivalent definition of the WMSF (\cite[Proposition 3.3]{Lyons_Peres_Schramm2006minimal_spanning_forests}). For this definition we first need to recall the invasion percolation on a PWIT. Given a PWIT sample $(U,\emptyset,W)$, for each $u\in V(U)$, consider the following procedure. Let $I_0(u)$ be the graph with vertex set $\{u\}$ and no edges. Let $I_1(u)$ be the graph formed by the lowest edge incident to $u$ and define $I_n(u)$ recursively thereafter by setting $I_{n+1}(u)=I_n(u)\cup\{e\}$, where $e$ is the lowest edge among the edges not in $I_n(u)$ but incident to a vertex in $ I_n(u)$. Finally the tree $T^{(u)}=\bigcup_{n\geq0} I_n(u)$ is called the {\bf invasion percolation cluster} of $u$. For simplicity we will write $T=T^{(\emptyset)}$ for the invasion percolation cluster of the root $\emptyset$.
\nomenclature[T]{$T^{(u)}$}{The invasion percolation cluster of $u$ on the PWIT $(U,\emptyset,W)$; $T=T^{(\emptyset)}$}

By \cite[Proposition 3.3]{Lyons_Peres_Schramm2006minimal_spanning_forests} it follows that the WMSF is distributed as the union of all the trees in $\big\{T^{(u)}\colon u\in V(U)\big\}$. Let $M$ be the connected component of the WMSF containing the root $\emptyset$. 
\nomenclature[M]{$M$}{The tree of $\emptyset$ in the WMSF on the PWIT} We root $M$ at $\emptyset$ and let it inherit its weights from the weight function $W$ of the PWIT. We thus obtain a random rooted weighted tree $(M, \emptyset, W)$. This is the local weak limit of the MST on complete graphs (and regular graphs of large degree) mentioned earlier and is the central object of study in our paper.

\subsection{Main results}\label{sec: intro: main results}
For simple random walk $(Y_n)_{n\geq0} $ on a locally finite, connected graph $G$ starting from $x\in V(G)$, define the transition density $p_n(x,y):=P^x[Y_n=y]/\mathrm{deg}(y)$. \nomenclature[Yn]{$(Y_n)_{n\geq0}$}{Simple random walk on $M$}
\nomenclature[pn(x,y)]{$p_n(x,y)$}{The transition density for a simple random walk on $M$, defined as $p_n(x,y):=\mathbb{P}^x[Y_n=y]/\mathrm{deg}(y)$.}
The spectral dimension of a graph $G$ is defined as 
\[
d_s(G):=-2\lim_{n\to\infty}\frac{\log p_{2n}(x,x)}{\log n} \, ,
\]
whenever the limit exist (if the limit exists it does not depend on the choice of the vertex $x$). 
\nomenclature[ds(G)]{$d_s(G)$}{The spectral dimension of a graph $G$}

\begin{theorem}\label{thm: spectral dimension}
	Almost surely $M$ has spectral dimension $3/2$. Moreover, there exist a constant $\beta>0$ and a random variable $C\in[1,\infty)$ that only depends on $M$, such that  almost surely the transition density of $M$ satisfies the following: 
	\[
	C^{-1} (\log n)^{-\beta} n^{-3/4}\leq p_{2n}(\emptyset,\emptyset)\leq C(\log n)^{\beta} n^{-3/4} \, , 
	\] 
	for all $n\geq 2$.
\end{theorem}

For our next result we write $d=d_G$ for the graph distance metric on $V(G)$, i.e., $d(x,y)$ is the length of the shortest path between $x$ and $y$ in $G$. 
\nomenclature[d=dG]{$d=d_G$}{The unweighted graph distance on $G$}
Our next result states that the typical graph distance from the origin after $n$ steps of the random walk is order $n^{1/4+o(1)}$ and additionally, the maximal distance attained up to time $n$ is of the same order. 

\begin{theorem}\label{thm: diffusive property}  There exist a constant $\beta_2>0$ and a random variable $C_2\in[1,\infty)$ that only depends on $M$, such that  for almost surely  $M$, 
	\[
	P^{\emptyset}\Big[  C_2^{-1}(\log n)^{-\beta_2} n^{1/4}\leq d(\emptyset,Y_n)\leq C_2(\log n)^{\beta_2}n^{1/4}  \Big]\geq 1-\frac{C_2}{\log n} \, ,
	\]
	for all $n\geq 2$. 
	% Furthermore, 
	% Also for $\mathbb{P}$-a.e.\ $\omega$, there exist  $N_2=N_2(\omega,\overline{\omega}),C_2=C_2(\omega,\overline{\omega})\geq1$ such that $P^{\emptyset}_\omega\big(N_2<\infty,C_2<\infty\big)=1$ and 
	% \be\label{eq: a.s. behavior for the maximal distance of SRW from the root}
	% C_2^{-1}n^{1/4}(\log n)^{-11}\leq \max_{0\leq k\leq n}d_M(\emptyset,Y_k)\leq C_2n^{1/4}(\log n)^{270},\,\,\,\forall\,n\geq N_2. 
	% \ee
\end{theorem}

\begin{remark} \label{rem: other random walk asymptotics}
	It is by now standard that many other random walk estimates can be proved using the same techniques. For instance, it can be shown that $M$ almost surely satisfied that the number of distinct vertices the random walk visits after $n$ steps is $n^{3/4+o(1)}$, the expected time it takes the walker to reach distance $R$ from the origin is $R^{4+o(1)}$. We omit the details.
\end{remark}

Lastly, we improve the correction terms in  \cite[Theorem 1.2]{Addario2013local_limit_MST_complete_graphs} for the volume growth of $M$ and provide bounds that are tight up to powers of $\log$. Let $B_M(\emptyset,r)=\{v\in M\colon d_M(v,\emptyset)\leq r\}$ denote the ball of radius $r$ centred at the root  $\emptyset$. %and let $|B_M(\emptyset,r)|$ be its size. 
\nomenclature[Bmvr]{$B_{M}(v,r)$}{Ball in $M$ centred at $v$ and with radius $r$.}

\begin{theorem}\label{thm: improved bounds on the volume}
	Almost surely $M$ satisfies
	\be\label{eq: improved upper bound on the volume}
	\limsup_{r\to\infty}\frac{|B_M(\emptyset,r)|}{r^3(\log r)^{18} }<\infty
	\ee
	and  
	\be\label{eq: lower bound on the volume}
	\liminf_{r\to\infty}\frac{|B_M(\emptyset,r)|}{r^3/(\log r)^{18} }>0. 
	\ee
\end{theorem}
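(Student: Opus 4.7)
The plan is to prove both bounds by establishing polynomial tail estimates on $|B_M(\emptyset, r)|$ at a single scale $r$, then running a Borel--Cantelli argument along the dyadic sequence $r_k = 2^k$ and extending to all $r$ by the monotonicity of $r \mapsto |B_M(\emptyset,r)|$. Concretely, I would aim to show that there exist constants $C,\alpha>0$ such that for all $r\geq 2$,
\be\label{eq:plan-tails}
\pr\bigl[|B_M(\emptyset,r)| \geq r^3 (\log r)^{18}\bigr] \leq C r^{-\alpha}, \quad
\pr\bigl[|B_M(\emptyset,r)| \leq r^3 / (\log r)^{18}\bigr] \leq C r^{-\alpha}.
\ee
Since $\sum_k 2^{-k\alpha} < \infty$, Borel--Cantelli applied at $r_k = 2^k$ gives the inequalities a.s.\ for all large $k$, and monotonicity transfers them to every $r$ at the cost of an $O(1)$ factor inside the log, which can be absorbed.

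For the upper tail in \eqref{eq:plan-tails}, I would use the decomposition
$M = \bigcup_{u\in V(U)} T^{(u)} \cdot \mathbbm{1}\{T^{(u)}\cap T \neq \es\}$
of $M$ as a union of invasion clusters whose root lies on the backbone $T = T^{(\emptyset)}$. Two ingredients feed into the count of points in $B_M(\emptyset,r)$: first, the number of cluster-roots $u$ attached to $T$ within $M$-distance $r$ of $\emptyset$, which is $O(r(\log r)^{c_1})$ with high probability because the invasion backbone has roughly linear depth growth with controlled polylog fluctuations; second, for each such cluster, a sharp volume estimate of the form $|B_{T^{(u)}}(u,\rho)|=O(\rho^2(\log \rho)^{c_2})$ at the relevant scale $\rho \leq r$. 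The latter follows from moment bounds on ball volumes in the critical Poisson$(1)$ Galton--Watson tree (more precisely its incipient infinite cluster), since beyond a bounded ``burn-in'' of large-weight edges the invasion cluster is absolutely continuous with respect to this critical branching structure. Multiplying the two estimates gives the desired $r^3 (\log r)^{c_1+c_2}$ upper bound, and one calibrates so that the log exponent is at most $18$.

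For the lower tail in \eqref{eq:plan-tails}, I would isolate a conveniently tractable sub-population of vertices of $M$ known to sit inside $B_M(\emptyset,r)$ and apply a second-moment / Paley--Zygmund argument. The natural choice is the union, over backbone vertices at $T$-depth comparable to $r$, of the portions of their attached invasion clusters at their own depth $\leq cr$; using the branching structure one shows the expected size is $\Omega(r^3/(\log r)^{c_3})$ and the second moment is $O(r^6/(\log r)^{2c_3 - c_4})$, which suffices for a polynomial tail via Paley--Zygmund. Again the exponent of $\log$ is then matched to $18$.

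The main obstacle is the precise book-keeping of the $\log$ exponents: every ingredient (backbone height, per-cluster volume, and the translation between intrinsic PWIT depth and $M$-graph-distance) contributes its own polylog correction, and the tails of the ``bad'' events must be strong enough to survive the union bound implicit in Borel--Cantelli. The heart of the argument is thus a suite of quantitative tail estimates for invasion percolation on the PWIT (sharpening Addario-Berry's bounds from $r^{3\pm o(1)}$ to $r^3(\log r)^{\pm 18}$), and verifying that the dependencies across the decomposition of $M$ are controllable enough for a second moment to close.
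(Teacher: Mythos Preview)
Your overall architecture (tail bounds at a single scale, Borel--Cantelli along dyadics, monotonicity, Paley--Zygmund for the lower bound) matches the paper's, but the decomposition you propose is not the one that works, and your scaling of the pieces is inverted.

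The paper does \emph{not} decompose $M$ as a linear backbone with invasion clusters $T^{(u)}$ of quadratic volume hanging off it. The invasion clusters $T^{(u)}$ for different $u$ overlap heavily, so summing their volumes does not give a clean upper bound, and there is no sense in which ``$r$ cluster-roots times $\rho^2$ per cluster'' is the right arithmetic. Instead, the working decomposition is $M = T \cup \bigcup_{v \in V(T)} M_v$, where $M_v$ is the finite component of $v$ in $M \setminus E(T)$. The key structural input (\cref{thm: dynamic constrution of M}) is that, conditionally on $T$, each $M_v$ is an independent tree sampled from the Poisson Galton--Watson aggregation (PGWA) process with parameter the activation time $x(v)$. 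The scaling is the opposite of what you wrote: $|B_T(\emptyset,r)|$ is of order $r^2$ (quadratic, not linear), while $\E[|M_v| \mid x(v)=\lambda] \asymp 1/(\lambda-1)$ is of order $r$ (linear, not quadratic) when $\lambda \approx 1+1/r$. The product is still $r^3$, but the analysis lives on different objects than you describe.

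The technical heart of the proof is therefore a suite of moment bounds for the PGWA trees, none of which your proposal identifies: $\E_\lambda[|M_v|] \asymp (\lambda-1)^{-1}$ (\cref{prop: expected size of M_v}) for the upper bound, $\E_\lambda[|M_v|^2] \leq C(\lambda-1)^{-4}$ (\cref{prop: second moment of M_v}) for the Paley--Zygmund step, and a height bound $\P_{1+1/r}[\mathrm{Height}(M_v) \geq Cr(\log r)^2] \leq C/r^3$ (\cref{prop: height of M_v}) to ensure the $M_v$'s actually sit inside $B_M(\emptyset,r)$. These are combined with existing pond estimates for $T$ from \cite{Addario2013local_limit_MST_complete_graphs}. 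A secondary point: the tails the paper achieves are only of order $(\log r)^{-c}$ for $c>1$, not $r^{-\alpha}$; this is still summable along $r_k=2^k$, but you should not expect polynomial tails here.
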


\begin{remark} We have not tried to optimize the powers of $\log$ in the three theorems above. In fact, it is quite reasonable that the actual fluctuations are powers of $\log \log$ as in \cite{BarlowKumagaiTrees06}.
\end{remark}

%We haven't tried to optimize the powers of logarithmic terms in Theorem \ref{thm: spectral dimension} and \ref{thm: diffusive property} since the fluctuations terms of the volume bounds in Theorem \ref{thm: improved bounds on the volume} and of the effective resistance bounds in Proposition \ref{prop: effective resistance lower bound} might be far from the real ones. Actually the real fluctuations might be even be log-logarithmic; a related example is that $	\limsup_{r\to\infty}\frac{|B_T(\emptyset,r)|}{r^2\log\log r}<\infty$   (\cite[Theorem 1.3]{Addario2013local_limit_MST_complete_graphs}).

\subsection{Outline of the paper}
We give a quick review of invasion percolation and the WMSF on the PWIT in \cref{sec: preliminaries}. More information can be found in \cite{Addario2013local_limit_MST_complete_graphs} and \cite{Addario_Griffiths_Kang2012invasion_percolation_PWIT}. In Section \ref{sec: volume} we improve the existing bounds on volume growth of $M$ and give the proof of Theorem \ref{thm: improved bounds on the volume}. In Section \ref{sec: resistance} we study the effective resistance from the root to a sphere in $M$ giving an almost linear lower bound in the radius of the sphere; the linear upper bound is trivial. Using these estimates, we use some methods and results of Kumagai and Misumi \cite{Kumagai_Misumi2008} to obtain Theorems \ref{thm: spectral dimension} and \ref{thm: diffusive property}. Lastly, we discuss the universality of the limit, the difference between weighted and unweighted local limits, and show how to use apply a theorem of Aldous and Steele \cite{Aldous_Steele2004objective_method}, to obtain that the component of the root in the WMSF on the PWIT is the (unweighted) local limit of the MST on any sequence of finite simple regular graphs with degree tending to infinity, see \cref{thm: unweight local limit for regular graph sequence}.
Throughout the paper, we will use $c$, $c_i$, $C$ and $C_i$ to represent positive finite constants. The values of these constants may vary from line to line and even within a single string of inequalities.
Finally a list of notation is provided just before the bibliography for readers' convenience.

\section{Preliminaries}\label{sec: preliminaries}

In this section we collect and elaborate on some facts obtained in \cite{Addario2013local_limit_MST_complete_graphs} and \cite{Addario_Griffiths_Kang2012invasion_percolation_PWIT}.

\subsection{The invasion percolation cluster $T$}\label{sec: subsec T}
Invasion percolation on the PWIT has been studied in detail  \cite{Addario_Griffiths_Kang2012invasion_percolation_PWIT} and here we only collect a few results from \cite{Addario_Griffiths_Kang2012invasion_percolation_PWIT} and \cite{Addario2013local_limit_MST_complete_graphs}. Recall that $T=T^{(\emptyset)}$ denotes the invasion percolation cluster of the root $\emptyset$ on the PWIT. 

%Denote by $T$ the invasion percolation cluster of the root $\emptyset$ on PWIT. Write $e_k$ for the $k$-th edge invades in the invasion percolation process starting from the root $\emptyset$, i.e., $\{e_k\}=I_{k+1}(\emptyset)\backslash I_k(\emptyset)$. It is not hard to see that $\limsup_{k\to\infty}W(e_k)=1$ almost surely. 
%Given $v\in V(U)$ and $z>0$, write $U_v(z)$ for the subtree of $U$ and all descendants of $v$ whose path to $v$ contains only edges of weight less than $z$. Obviously this tree is distributed as $\mathrm{PGW}(z)$, a Galton--Watson tree with progeny distribution $\textnormal{Poisson}(z)$. Hence this tree is infinite with probability $\theta(z):=\mathbb{P}\big[|\mathrm{PGW}(z)| =\infty\big]>0$ when $z>1$. 
%So for each fixed $z>1$, each time the invasion percolation process invades a new vertex, say $v$, with probability at least $\theta(z)>0$, it will never invade another edge with weight at least $z$. Hence $\limsup_{k\to\infty}W(e_k)\leq 1$ almost surely. On the other hand, all the edges in $E(U)$ with weight at most $1$ form trees distributed as $\mathrm{PGW}(1)$. Since a $\mathrm{PGW}(1)$ tree is almost surely finite, there are must be infinitely many $k$ such that $W(e_k)>1$. Thus $\limsup_{k\to\infty}W(e_k)\geq 1$ almost surely. 

The first fact we use is \cite[Corollaries 5 and 22]{Addario_Griffiths_Kang2012invasion_percolation_PWIT} stating that $T$ is almost surely one-ended, i.e., the removal of any finite set of vertices leaves precisely one infinite component. Next, $T$ has a natural decomposition into \textit{ponds} and \textit{outlets}. See \cite[Introduction]{van_den_Berg_etal2007size_of_pond_IPC} for a nice interpretation.
We say that an edge $e\in E(T)$ is an {\bf outlet} (also called \emph{forward maximal edge} in \cite{Addario2013local_limit_MST_complete_graphs}) if the removal of $e$ separates the root $\emptyset$ from infinity and if, for any other edge $e'\in E(T)$, if the path from $e'$ to $\emptyset$ contains $e$ then $W(e')<W(e)$. In particular all the outlets are on the unique infinite simple path  that represents the end of $T$ emanating from the root $\emptyset$. 
\nomenclature[Pi]{$P_i$}{The $i$-th pond of $T$}
\nomenclature[RiSi]{$(R_i,S_{i+1})$}{The $i$-th outlet of $T$, where $R_i\in V(P_i)$ and $S_{i+1}\in V(P_{i+1})$}
Write $S_1=\emptyset$, and list the outlets of $T$ in increasing order of distance from the root $\emptyset$ as $\big( (R_i,S_{i+1}), i\geq 1 \big)$. The removal of all outlets separates $T$ into an infinite sequence of finite trees $(P_i,i\geq 1)$,  where for each $i\geq1$, $R_i$ and $S_i$ are vertices of $P_i$. These trees $(P_i,i\geq 1)$ are called the {\bf ponds} of $T$.  

For $\lambda>0$, write $\mathrm{PGW}(\lambda)$ for a branching process tree with progeny distribution of $\textnormal{Poisson}(\lambda)$ (with one particle initially). 
\nomenclature[PGW]{$\mathrm{PGW}(\lambda)$}{Galton-Watson tree with $\mathrm{Poisson}(\lambda)$ progeny distribution.} 
Define $\theta(\lambda):=\mathbb{P}\big[|\mathrm{PGW}(\lambda)| =\infty\big]$. 
%\nomenclature[Tlambda]{$\theta(\lambda)$}{$\theta(\lambda):=\mathbb{P}\big[|\mathrm{PGW}(\lambda)| =\infty\big]$.}
%\nomenclature[Thetalambda]{$\theta(\lambda)$}{$\theta(\lambda):=\mathbb{P}\big[|\mathrm{PGW}(\lambda)| =\infty\big]$.} 
For $\lambda >1$, write $B_\lambda$ for a random variable whose distribution is given by   
\[
\mathbb{P}[B_\lambda=m]=\frac{\theta(\lambda)}{\theta'(\lambda)}\cdot \frac{e^{-\lambda m}(\lambda m)^{m-1}}{(m-1)!},\,\,m= 1,2,3,\ldots. 
\]
\nomenclature[Blambda]{$B_{\lambda}$}{Truncated, size-biased Borel-Tanner random variable.}
%The distribution of $B_\lambda$ is a ``truncated, size-biased" analogue of the Borel--Tanner distribution and 
The fact that $\sum_{m=1}^{\infty}\frac{\theta(\lambda)}{\theta'(\lambda)}\cdot \frac{e^{-\lambda m}(\lambda m)^{m-1}}{(m-1)!}=1$ was proved in  \cite[Corollary 29]{Addario_Griffiths_Kang2012invasion_percolation_PWIT}. 

\begin{definition}\label{def: sample IPC on PWIT}
	Construct a rooted weighted infinite one-ended tree as follows. 
	\begin{enumerate}
		\item[(a)] First sample a Markov chain $(X_n)_{n\geq 1}$ taking positive real values as follows:
		\begin{itemize}
			\item sample a random variable $F$ uniformly distributed in $(0,1)$ and let $X_1=\theta^{-1}(F)$;
			
			\item given $X_n$, sample $X_{n+1}$ using 
			\be\label{eq: transition kernel for X_n}
			\mathbb{P}\big[ X_{n+1}\in dy \mid X_n=x\big]=\frac{\theta'(y)dy}{\theta(x)},\,\,\,1<y<x.
			\ee
		\end{itemize}
		
		\item[(b)]  Given $(X_n)_{n\geq1}$, sample independently positive integers $(Z_n)_{n\geq1}$ such that  $Z_n$ is distributed as $B_{X_n}$.
		
		\item[(c)]  Given $(Z_n)_{n\geq 1}$, sample independently finite trees $(\widetilde{P}_n)_{n\geq1}$ such that for each $n\geq 1$, the tree $\widetilde{P}_n$ is  a uniformly random labelled tree with $Z_n$ vertices. 
		
		\item[(d)] Given $(\widetilde{P}_n)_{n\geq1}$, for each $n\geq1$ pick two vertices $\widetilde{R}_n,\widetilde{S}_n$ uniformly at random from the vertex set of $\widetilde{P}_n$, independently of each other.
		Add an edge $\widetilde{e}_n$ between $\widetilde{R}_n,\widetilde{S}_{n+1}$ for each $n\geq1$ and we get an infinite tree $\widetilde{T}$. Let the root $\rho$ of $\widetilde{T}$ be the vertex $\widetilde{S}_1$. 
		
		\item[(e)] For each $n\geq 1$, assign $\widetilde{e}_n=(\widetilde{R}_n,\widetilde{S}_{n+1})$ with weight $\widetilde{W}(\widetilde{e}_n)=X_n$, and for each edge $e\in E(\widetilde{P}_n)$, let its weight $\widetilde{W}(e)$ be uniform distributed on $[0,X_n]$, independently of each other.

	\end{enumerate}

\end{definition}

Combining Theorem 27, Lemma 28 and Corollary 29 and the remark after Theorem 27 in  \cite{Addario_Griffiths_Kang2012invasion_percolation_PWIT} one has the following  description of the invasion percolation cluster $T$. 
\begin{theorem}\label{thm: outlet weight and pond size of T} Let $(T,\emptyset,W)$ be the invasion percolation cluster $T$ rooted at $\emptyset$ with weights $W$ inherited from the PWIT. Then $(T,\emptyset,W)$ is distributed according to \cref{def: sample IPC on PWIT} and in particular, the ponds $(P_n)_{n\geq1}$ and the weights $\big(W((R_n,S_{n+1}))\big)_{n\geq1}$ of $(T,\emptyset,W)$ have the same distribution as $(\widetilde{P}_n)_{n\geq1}$ and $(X_n)_{n\geq1}$ respectively. 
\end{theorem}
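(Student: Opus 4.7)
The plan is to verify each of the five layers (a)--(e) in the construction of \cref{def: sample IPC on PWIT} separately, by invoking the corresponding statements in \cite{Addario_Griffiths_Kang2012invasion_percolation_PWIT}, since the theorem is essentially a careful repackaging of results already present there.

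For parts (a) and (b), I would first establish the joint law of the outlet-weight sequence $\big(W((R_n,S_{n+1}))\big)_{n\geq1}$ and the pond-size sequence $(|P_n|)_{n\geq1}$. Theorem 27 of \cite{Addario_Griffiths_Kang2012invasion_percolation_PWIT} identifies the outlet weights as a strictly decreasing Markov chain whose initial coordinate satisfies $\theta(W_1)\sim\mathrm{Uniform}(0,1)$ and whose transition kernel coincides with \eqref{eq: transition kernel for X_n}; this matches the construction of $(X_n)$ with $X_1=\theta^{-1}(F)$. Conditional on $W_n=\lambda$, Lemma 28 and Corollary 29 of \cite{Addario_Griffiths_Kang2012invasion_percolation_PWIT} together identify the law of $|P_n|$ as the size-biased, truncated Borel--Tanner law $B_\lambda$ (the size-biasing accounting for the fact that $P_n$ is a pond attached to an outlet, not a typical supercritical $\mathrm{PGW}(\lambda)$ tree conditioned on being finite), and they yield conditional independence of the $|P_n|$ across $n$.

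For parts (c)--(e) I would condition further on the whole sequence of sizes. Each pond $P_n$ is, by definition, what remains after removing the $n$-th outlet, so it is a finite subtree of a PWIT exploration carried out with all weights strictly below $W_n$. The remark after Theorem 27 of \cite{Addario_Griffiths_Kang2012invasion_percolation_PWIT}, together with the standard fact that a Poisson-weighted branching tree conditioned on its size is a uniform labelled tree with internal weights that are conditionally i.i.d.\ $\mathrm{Uniform}(0,W_n)$, yields (c) and (e). Point (d) then follows from the invariance of the uniform labelled tree under relabellings: since $R_n$ and $S_n$ are determined by the invasion process in a way that does not privilege any particular vertex of $P_n$ once its tree structure and weights are revealed, they are conditionally uniform and independent on $V(P_n)$.

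The main obstacle I anticipate is not any single distributional identity --- each is on the shelf --- but rather the bookkeeping needed to stitch together the conditional independences across levels: namely, that conditionally on the outlet weights the ponds are not merely marginally Borel--Tanner but jointly independent as weighted labelled trees, and that the uniform choice of $\widetilde R_n,\widetilde S_n$ in (d) correctly matches the joint law of $(R_n,S_n)$ within the ambient one-ended structure of $T$. Once the one-endedness from \cite[Corollaries 5 and 22]{Addario_Griffiths_Kang2012invasion_percolation_PWIT} is used to decompose $T$ along the backbone of outlets, the remaining pieces are independent PWIT explorations and the claimed factorisation follows.
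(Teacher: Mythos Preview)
Your proposal is correct and takes essentially the same approach as the paper: the paper does not give an independent proof of this theorem but simply states that it follows by combining Theorem~27, Lemma~28, Corollary~29, and the remark after Theorem~27 of \cite{Addario_Griffiths_Kang2012invasion_percolation_PWIT}, which is exactly the set of results you invoke. Your write-up is more detailed about which cited result supplies which layer (a)--(e) and about the conditional-independence bookkeeping, but the substance is identical.
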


\subsection{Poisson Galton--Watson aggregation process and the construction of $M$}\label{sec: pre: construction of M}
Recall that $M$ is the tree containing the root $\emptyset$ in the WMSF on the PWIT. In particular $T$ is a subtree of $M$.
The tree $M$ is also almost surely one-ended (Corollary 7.2 in \cite{Addario2013local_limit_MST_complete_graphs}). So for any $v\in V(M)$, there is a unique infinite simple path $P(v,\infty)$ starting from $v$ in $M$. Write $x(v)=\sup\{W(e)\colon e\in P(v,\infty)\}$ and call it the \emph{activation time} of $v$. 
\nomenclature[X(v)]{$x(v)$}{The largest weight in the unique infinite path in $M$ starting from $v$, called the activation time of $v$.}
If $v\in V(T)$ and $v$ is in the $i$-th pond $P_i$, then by Theorem \ref{thm: outlet weight and pond size of T}  it is easy to see that $x(v)$ equals the weight of the $i$-th outlet.

Since  $T$ and $M$ are one-ended and $T$ is a subtree of $M$, 
the removal of all edges in $T$ (not the vertices) separates $M$ into a forest containing infinitely many finite trees. Each such tree is naturally rooted at some vertex $v\in V(T)$, denoted by $M_v$. We have seen how to sample $(T,\emptyset,W)$ and next we show how to sample $M_v$ for each $v\in V(T)$, conditioned on $(T,\emptyset,W)$. 
\nomenclature[M(v)]{$M_v$}{The component rooted at $v$ in $M\backslash E(T)$ for $v\in V(T)$}

\subsubsection{Poisson Galton--Watson aggregation process}\label{sec:PGWAdef}

%
%\begin{definition}\label{def: definition of M(lambda)}
%	Define $M(1)=T$ and  for $\lambda>1$ define $M(\lambda)$ to be the subtree of $M$ with vertex set 
%	\[
%	V\big(M(\lambda)\big)=V(T)\cup \{v\in V(M)\backslash V(T) \colon x(v)\leq \lambda \}. 
%	\]
%\end{definition}
%Obviously, $M=M(\infty)=\lim_{\lambda\to\infty}M(\lambda)$. b

\nomenclature[Sstar]{$s^*$}{For $s>1$, $s^*=s\big(1-\theta(s)\big)<1$ is the dual parameter of $s$.}
For $s\geq1$, a $\mathrm{PGW}(s)$ tree conditioned on being finite is distributed as $\mathrm{PGW}(s^*)$, where 
\[
s^*=s\big(1-\theta(s)\big)\leq 1. 
\]
This simple fact is well known and can be easily deduced using Proposition 5.28(ii) in \cite{LP2016}. For $s\geq 1$, the number $s^*$ is called the \textit{dual parameter} of $s$ and is the unique number in $(0,1]$ such that
\[
se^{-s}=s^*e^{-s^*}.
\]
As $s$ increases from $1$ to $\infty$, the dual parameter $s^*$ decreases from $1$ to $0$.

\begin{definition}[One-step Poisson Galton--Watson aggregation]\label{def: one step Poisson GW aggregation}
	Given a vertex $v$ and a real number $\lambda>1$, we construct a rooted, weighted random tree $\mathfrak{T}_{v,\lambda}=(\mathfrak{T},v,W)$ as follows, where $W:E(\mathfrak{T})\to(0,\infty)$ is the weight function on edges of $\mathfrak{T}$.
	\begin{enumerate}
		\item 
		Sample a Poisson process on $[\lambda,\infty)$ with rate $(1-\theta(s))$.
		Let $\mathcal{P}=\{\tau_1,\ldots\}$ be the atoms of this Poisson process. Since $1-\theta(s)$ tends to zero exponentially fast (see \eqref{eq: function equation for theta} below),  almost surely $\mathcal{P}$ is a finite set.
		
		\item 
		\begin{itemize}
			\item If $\mathcal{P}$ is empty, then we let $(\mathfrak{T},v,W)$ be the trivial graph with a single vertex $v$ and no edges. 
			
			\item If $\mathcal{P}$ is not empty, then for each $\tau_i\in \mathcal{P}$, 
			\begin{itemize}
				\item sample a $\mathrm{PGW}(\tau_i^*)$ tree $T_i$. Root  $T_i$  at the progenitor and add an edge $e_i$ between $v$ and the root of $T_i$;
				
				\item assign $W(e_i)=\tau_i$ and assign i.i.d.\ uniform $(0,\tau_i)$ weights to the edges of $T_i$; 
				
				%		\item assign $\tau_i$ to be the activation times of all the vertices in $T_i$, i.e, $A(u)=\tau_i,\,\forall\,u\in V(T_i)$.
			\end{itemize}  
			
		\end{itemize}  
		\item Finally 	we set $(\mathfrak{T},v,W)$ to be the rooted weighted random tree obtained after  the above two operations for all $\tau_i\in\mathcal{P}$. The random tree $\mathfrak{T}$ is almost surely a finite tree since  $\mathcal{P}$ is almost surely a finite set.
	\end{enumerate}
	
\end{definition}

% \begin{remark}
% 	The weights assigned to the edges of $T_i$ should be i.i.d.\ uniform $(0,\tau_i)$ instead of truncated exponentials in \cite{Addario2013local_limit_MST_complete_graphs};  
% 	for instance one can use Theorem 2.3.1 on page 67 of \cite{Ross1996stochastic_processes_2nd_edi} to give a simple proof.
% \end{remark}
\nomenclature[PGWA]{PGWA}{Poisson Galton--Watson aggregation process}
\begin{definition}[Poisson Galton--Watson aggregation (PGWA) process]\label{def: Poisson GW aggregation process}
	Given a vertex $v$ and a real number $\lambda>1$, construct a rooted, weighted tree $(\widetilde{M}_v,v,\widetilde{W})$ in the following way.

	\begin{enumerate}
		\item Start with a trivial rooted, weighted graph $G_0$ with a single vertex $v$ and no edges. 
		Set $\iact_0=\{v\}$ and let $A(v)=\lambda$. (Also call $A(v)$  the activation time of $v$.)
		
		\item 	Given $G_n,\iact_n$, we construct $G_{n+1},\iact_{n+1}$ as follows. 
		\begin{enumerate}
			\item If $\iact_n$ is an empty set, then simply set $G_{n+1}=G_n$ and $\iact_{n+1}=\iact_n$.
			
			\item Otherwise, 
			\begin{itemize}
				\item for each vertex $w$ in $\iact_n$, if $w\neq v$, then define its activation time $A(w)$ to be the largest weight on the unique path from $w$ to $v$ in the tree $G_n$;
				\item for each vertex $w$ in $\iact_n$, sample a rooted, weighted tree $\mathfrak{T}_{w,A(w)}$ using the one-step Poisson Galton--Watson aggregation process in Definition \ref{def: one step Poisson GW aggregation} and attach this tree to $G_n$ by identifying the root of $\mathfrak{T}_{w, A(w)}$ with the vertex $w$; 
				\item let $G_{n+1}$ be the rooted weighted graph obtained after  attaching these trees  for all $w\in \iact_n$ and set $\iact_{n+1}=V(G_{n+1})\backslash V(G_n)$. In particular, we always root $G_{n+1}$ at $v$. (Note that $\iact_{n+1}$ is empty if and only if all the trees $\mathfrak{T}_{w,A(w)},\,w\in \iact_n$  are trivial.) 
			\end{itemize}

		\end{enumerate}
		
		\item Finally we let $(\widetilde{M}_v,v,\widetilde{W})=\bigcup_{n\geq 1}G_n$. 
		
	\end{enumerate}
	%	We abbreviate the above construction of $(\widetilde{M}_v,v,\widetilde{W})$ as the PGWA process starting from time $\lambda$. 
\end{definition}

We denote by $\mathbb{P}_\lambda$ and $\mathbb{E}_\lambda$ the law and corresponding expectation of $\widetilde{M}_v$ sampled by the PGWA process starting from time $\lambda$. 
\nomenclature[PlambdaElambda]{$\mathbb{P}_\lambda$}{The law of a tree sampled from the PGWA process starting from time $\lambda$.} It is not obvious that $\widetilde{M}_v$ is finite. In fact, it was shown in \cite[Proposition 7.3 and Corollary 7.2]{Addario2013local_limit_MST_complete_graphs} that $|\widetilde{M}_v|$ has a finite first moment. In particular almost surely the set $\iact_n$ in the PGWA process is empty for all $n$ sufficiently large. 

The following description of $M$ using the PGWA process was shown in \cite[Section 2.2]{Addario2013local_limit_MST_complete_graphs}.
\begin{theorem}\label{thm: dynamic constrution of M}
	First  sample the rooted weighted tree $(T,\emptyset,W)$ using Theorem \ref{thm: outlet weight and pond size of T}. In particular $x(v)$ is then known for each vertex $v\in V(T)$. Conditioned on $(T,\emptyset,W)$, the trees $\{M_v\colon  v\in V(T)\}$ are independent and each tree $M_v$ has the same distribution as a tree sampled from the PGWA process starting from time $x(v)$. 	
\end{theorem}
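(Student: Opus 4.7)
The plan is to prove the theorem by identifying $M_v$ via a local PWIT edge-selection rule and matching the resulting distribution to the recursive Definition 2.5 step by step. Since $U$ is a tree and $T$ is connected, every ancestor in $U$ of a vertex of $T$ is itself in $T$, so each edge of $M\setminus E(T)$ must run from some $v\in V(T)$ to one of its PWIT-children $w\notin V(T)$. This reduces matters to describing, for each such $v$, the set of child-edges of $v$ (and their subsequent subtrees) that survive in $M$.

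The key lemma I would prove first is the following: fix $v\in V(T)$, let $\lambda:=x(v)$, and let $w$ be a PWIT-child of $v$ with $(v,w)\notin E(T)$; then $(v,w)\in E(M)$ if and only if $\tau:=W((v,w))\geq \lambda$ and the PWIT-subtree rooted at $w$ admits no infinite simple path with all edge weights at most $\tau$. One direction uses the $T$-tail emanating from $v$ (whose maximum weight is exactly $\lambda$) paired with paths into $w$'s subtree to show that failure of either condition exhibits $(v,w)$ as the maximum of some extended cycle; the other direction shows that these two conditions jointly block every possible pair of disjoint infinite simple paths through $(v,w)$ from having $(v,w)$ as their maximum. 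The rule then iterates: once an edge of weight $\tau$ is accepted into $M_v$, any vertex $u$ inside the attached component has its unique infinite $M$-path leaving through that edge and continuing along the $T$-tail of maximum $\lambda\leq \tau$, so $x(u)=\tau$, and the same rule reapplies at $u$ with threshold $\tau$ using a previously-unexplored piece of the PWIT.

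Given the local rule, the distributional match with Definition 2.5 follows from two standard Poisson computations. First, conditional on $(T,\emptyset,W)$, the PWIT-child-edges of $v$ that are not in $T$ carry weights forming a rate-$1$ Poisson point process on $[0,\infty)$, and the attached sub-PWIT's are independent PWIT's; restricting this process to $[\lambda,\infty)$ and independently thinning each atom $\tau$ by the PWIT-subcriticality probability $1-\theta(\tau)$ yields a Poisson point process of intensity $(1-\theta(s))\,ds$ on $[\lambda,\infty)$, matching step (1) of Definition 2.4. Second, conditional on a surviving atom $\tau$, the attached sub-PWIT is a $\mathrm{PGW}(\tau)$ tree conditioned to be finite, which is by standard Poisson duality a $\mathrm{PGW}(\tau^*)$ tree; its edge weights, being a rate-$1$ Poisson process on $[0,\infty)$ conditioned to $[0,\tau]$, are i.i.d.\ $\mathrm{Uniform}(0,\tau)$, matching step (2). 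Iterating this one-step match reproduces the full PGWA recursion of Definition 2.5 with initial time $\lambda=x(v)$, and independence of $\{M_v:v\in V(T)\}$ conditional on $T$ is automatic because distinct $v\in V(T)$ use disjoint PWIT-regions.

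The main obstacle is the local selection rule itself: translating the global extended-cycle criterion into the two clean conditions (i) and (ii) above requires a careful case analysis of all extended cycles through $(v,w)$, especially those whose common vertex lies strictly above $v$ in $U$ (so that the two disjoint paths leave from an ancestor of $v$). This analysis relies crucially on the fact that every ancestor of a $T$-vertex is itself in $T$ and that outlet weights are monotonically decreasing along the infinite $T$-tail, both of which are supplied by \cref{thm: outlet weight and pond size of T}.
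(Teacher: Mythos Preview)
The paper does not prove this theorem itself; it is quoted from \cite[Section~2.2]{Addario2013local_limit_MST_complete_graphs}, so there is no in-paper argument to compare against. Your outline is correct and is essentially the argument one finds in the cited reference.

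One simplification you are missing: your condition (i), $\tau\ge\lambda$, is automatic and need not be argued as part of the biconditional. If $v\in P_i$ (so $x(v)=X_i=\lambda$) and $w$ is a PWIT-child of $v$ with $w\notin T$, then at the moment the $i$-th outlet is invaded every vertex of $P_i$ has already been invaded (all edges of $P_i$ have weight $<X_i$ by \cref{thm: outlet weight and pond size of T}), so $(v,w)$ is a boundary edge of weight at least $X_i=\lambda$; and after the outlet is crossed only edges of weight $<X_i$ are ever added, so $w$ remains outside $T$ forever. This observation also gives you the conditional Poisson structure cleanly: since membership of a child of $v$ in $T$ is decided \emph{solely} by whether its edge weight lies in $[0,\lambda]$ or in $(\lambda,\infty)$, conditioning on $(T,\emptyset,W)$ fixes the restriction of the rate-$1$ Poisson process to $[0,\lambda]$ and leaves an independent rate-$1$ process on $(\lambda,\infty)$ with fresh PWIT subtrees attached. (Your sentence that the non-$T$ child-edges form a rate-$1$ process ``on $[0,\infty)$'' is therefore not quite right as written.) With $\tau>\lambda$ in hand, the infinite $T$-ray from $v$ certifies that the $v$-side always has an infinite $<\tau$-component, so the extended-cycle criterion collapses to your condition (ii) alone and the ``careful case analysis'' you anticipate is unnecessary. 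The rest of your match to Definitions~\ref{def: one step Poisson GW aggregation} and~\ref{def: Poisson GW aggregation process} --- thinning by $1-\theta(\tau)$, the $\mathrm{PGW}(\tau^*)$ duality, the uniform edge weights, and the recursion --- is correct.

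A small slip in the opening paragraph: ``each edge of $M\setminus E(T)$ must run from some $v\in V(T)$ to one of its PWIT-children $w\notin V(T)$'' is false as stated (edges deep inside some $M_v$ have neither endpoint in $T$), though your next clause ``and their subsequent subtrees'' shows you have the right picture.
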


%Indeed conditioned on the invasion cluster $(T,\emptyset,W)$ and $v\in T$, one knows the activation time $x(v)$ and the conditional distribution of $(M_v,v,W)$ is just $\mathbb{P}_{x(v)}$ (see Theorem \ref{thm: dynamic constrution of M} below). 
%Hence the rooted weighted tree $\widetilde{M}_v$ with law $\mathbb{P}_\lambda$ is almost surely finite since the expectation of its size is finite (see Proposition 7.3 and  Corollary 7.2  in \cite{Addario2013local_limit_MST_complete_graphs}). So in particular 

%{\color{blue}
The following proposition is a simple application of the recursive nature of the PGWA process and will be used later in the proof of Lemma \ref{lem: finite second moment of M_v}.
\begin{proposition}\label{prop: dominate PGWA tree by PGW tree}
	Suppose $\lambda>1$ and $\widetilde{M}_v$ is a tree with law $\mathbb{P}_\lambda$. 
	\begin{enumerate}
		\item[(a)] The tree $\widetilde{M}_v$ is stochastically dominated by a $\mathrm{PGW}\big(\lambda^*+\int_{\lambda}^{\infty}(1-\theta(y))\din y\big)$ tree.
		\item[(b)]   Fix some $\alpha>\lambda$. Color the vertices of $\widetilde{M}_v$ according to their activation times: if $A(u)\in[\lambda,\alpha]$, color $u$ black; otherwise color it white. Observe that white vertex can only have white descendants and the black vertices form a connected subtree rooted at $v$.
		Moreover
		\begin{itemize}
			\item the black subtree of $\widetilde{M}_v$ is stochastically dominated by a $\mathrm{PGW}\big(\lambda^*+\int_{\lambda}^{\alpha}(1-\theta(y))\din y\big)$ tree;
			
			\item conditioned on the black subtree of $\widetilde{M}_v$, the white parts are distributed as i.i.d.\ trees with law $\mathbb{P}_{\alpha}$ rooted at each of the black vertices (except the roots of these i.i.d.\ trees are colored black). 
		\end{itemize}
		
	\end{enumerate} 
	
\end{proposition}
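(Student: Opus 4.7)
The approach is to read off, for each vertex $u$ of a sample of $\widetilde{M}_v$, the exact law of its offspring from the recursive structure of the PGWA process. Every non-root $u$ belongs to a unique aggregated tree $T^{(u)}$ added during some step of the process; since $T^{(u)}$ is a $\mathrm{PGW}\bigl((A(u))^*\bigr)$ tree, $u$ has $\mathrm{Poisson}\bigl((A(u))^*\bigr)$ children inside $T^{(u)}$, and all vertices of $T^{(u)}$ carry the same activation time $A(u)$ because its internal edges are uniform on $(0,A(u))$. Independently, at the next step $u$ undergoes its own aggregation, contributing a further $\mathrm{Poisson}\bigl(\int_{A(u)}^{\infty}(1-\theta(y))\,\din y\bigr)$ children via the aggregation Poisson process on $[A(u),\infty)$. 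By Poisson superposition, the total number of children of $u$ is $\mathrm{Poisson}\bigl((A(u))^*+\int_{A(u)}^{\infty}(1-\theta(y))\,\din y\bigr)$, and these offspring counts are conditionally independent across vertices given the activation times; the root $v$ contributes only the second term.

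For (a), since both $s\mapsto s^*$ and $s\mapsto 1-\theta(s)$ are decreasing on $[1,\infty)$ and $A(u)\geq\lambda$ for every vertex $u$, the offspring rate at $u$ is at most $\mu:=\lambda^*+\int_\lambda^\infty(1-\theta(y))\,\din y$. To upgrade this pointwise bound to an honest coupling, I would generate $\widetilde{M}_v$ in breadth-first order and, at each vertex, add an independent $\mathrm{Poisson}$ number of extra children with rate making up the deficit to $\mu$, letting each new child start its own independent $\mathrm{PGW}(\mu)$ subtree. This produces a $\mathrm{PGW}(\mu)$ tree $\widehat{T}$ rooted at $v$ that contains $\widetilde{M}_v$ as a subgraph, which is the desired stochastic domination.

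For (b), the same computation gives the black offspring of a black vertex $u$ as $\mathrm{Poisson}\bigl((A(u))^*+\int_{A(u)}^{\alpha}(1-\theta(y))\,\din y\bigr)$: the descendants of $u$ inside $T^{(u)}$ share the activation time $A(u)\leq\alpha$ and are black, while among the aggregation atoms only those with $\tau\in[A(u),\alpha]$ produce black offspring, since atoms with $\tau>\alpha$ spawn subtrees whose vertices all have activation time $\tau>\alpha$ and are hence white. Stochastic domination of the black subtree by a $\mathrm{PGW}\bigl(\lambda^*+\int_\lambda^\alpha(1-\theta(y))\,\din y\bigr)$ tree then follows by the same monotonicity argument and coupling used in (a). For the white parts, Poisson splitting at each black vertex $u$ shows that, conditionally on the black subtree, the aggregation atoms with $\tau>\alpha$ at $u$ form an independent Poisson process on $(\alpha,\infty)$ of rate $1-\theta(s)$, each atom initiating a $\mathrm{PGW}(\tau^*)$ tree with edge weight $\tau$ on top of which a fresh PGWA recursion runs. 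Assembled across all atoms $\tau>\alpha$ at $u$, the white subtree rooted at $u$ is precisely a sample from $\mathbb{P}_\alpha$.

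The main subtle step will be justifying this last conditional independence: one must verify that once the black subtree and its associated randomness have been exposed, the white aggregations at distinct black vertices are driven by mutually disjoint, independent pieces of the underlying randomness. This is a direct consequence of Poisson splitting applied separately at each vertex together with the independence built into each one-step aggregation; the other ingredient, monotonicity of $s\mapsto s^*$ and $s\mapsto 1-\theta(s)$ on $[1,\infty)$, follows immediately from the identity $se^{-s}=s^*e^{-s^*}$ and from $\theta$ being increasing.
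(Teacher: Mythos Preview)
Your proposal is correct and follows essentially the same approach as the paper: both proofs decompose the offspring of each vertex $u$ into the $\mathrm{Poisson}((A(u))^*)$ children coming from the ambient $\mathrm{PGW}$ tree and the $\mathrm{Poisson}\bigl(\int_{A(u)}^{\infty}(1-\theta(y))\,\din y\bigr)$ children coming from the next aggregation step, then use $A(u)\geq\lambda$ and monotonicity to bound the total rate, and for (b) split the aggregation Poisson process at $\alpha$ to separate black from white offspring. Your write-up is slightly more explicit than the paper's in spelling out the coupling for stochastic domination and in flagging the conditional-independence justification for the white parts, but the underlying argument is the same.
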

\begin{proof}
	Observe that in the PGWA process, when we use the one-step PGWA process to sample a $\mathrm{PGW}(\tau^*)$ tree, all the vertices in this $\mathrm{PGW}(\tau^*)$ tree have the same activation time $\tau$. 
	For part (a),  first observe that the number of children of $v$ in $\widetilde{M}_v$ has  $\mathrm{Poisson}\big(\int_{\lambda}^{\infty}(1-\theta(y))\din y\big)$ distribution. For any other vertex $u\in V(\widetilde{M}_v)$, the number of children of $u$ in $\widetilde{M}_v$ comes from two parts: one part from the random $\mathrm{PGW}(A(u)^*)$ tree containing $u$ and another part from an independent $\mathrm{Poisson}\big(\int_{A(u)}^{\infty}(1-\theta(y))\din y\big)$ random variable. Since $A(u)\geq \lambda$ (whence $A(u)^*\le\lambda^*$) and the sum of independent Poisson random variables is still Poisson, one has that $\widetilde{M}_v$ is stochastically dominated by a $\mathrm{PGW}\big(\lambda^*+\int_{\lambda}^{\infty}(1-\theta(y))\din y\big)$ tree.
	
	For part (b), similar to part (a) one has that the black subtree is stochastically dominated by a $\mathrm{PGW}\big(\lambda^*+\int_{\lambda}^{\alpha}(1-\theta(y))\din y\big)$ tree. Indeed  the number of black children of $v$ has $\mathrm{Poisson}\big(\int_{\lambda}^{\alpha}(1-\theta(y))\din y\big)$ distribution. For any other black vertex $u\in V(\widetilde{M}_v)$, the number of black children of $u$ in $\widetilde{M}_v$ comes from two parts: one part from the random $\mathrm{PGW}(A(u)^*)$ tree containing $u$ (the vertices of this  $\mathrm{PGW}(A(u)^*)$ tree all have the same activation time $A(u)\in[\lambda,\alpha]$ so are all colored black) and another part from an independent $\mathrm{Poisson}\big(\int_{A(u)}^{\alpha}(1-\theta(y))\din y\big)$ random variable.

	To derive the conditional distribution of the white part of $\widetilde{M}_v$ given the black subtree, we utilize the independence property of a Poisson point process on disjoint intervals. Specifically, for the root $v$, we consider the activation times of its children that fall within the intervals $[\lambda,\alpha]$ and $(\alpha,\infty)$. Let $\{\tau_1,\ldots,\tau_b\}$ represent the activation times within $[\lambda,\alpha]$, and $\{ \hat\tau_1,\ldots,\hat\tau_w \}$ denote the activation times within $(\alpha,\infty)$ (note that $b$ and $w$ can be zero if no activation times fall within the respective intervals). We observe that $\{\tau_1,\ldots,\tau_b\}$ and $\{ \hat\tau_1,\ldots,\hat\tau_w \}$ are independent and are distributed as Poisson point processes with rate $1-\theta(y)$ on the intervals $[\lambda,\alpha]$ and $(\alpha,\infty)$, respectively.
	
	In the PGWA process, we proceed by sampling independent $\mathrm{PGW}(\tau^*)$ trees for each $\tau\in \{\tau_1,\ldots,\tau_b\}\cup \{ \hat\tau_1,\ldots,\hat\tau_w \}$ to obtain $\iact_1$. For every $\tau\in \{\tau_1,\ldots,\tau_b\}$, the vertices in the corresponding $\mathrm{PGW}(\tau^*)$ trees have the same activation time $\tau\in [\lambda,\alpha]$, thus they are colored black. Similarly, for each $\tau\in \{ \hat\tau_1,\ldots,\hat\tau_w \}$, the corresponding $\mathrm{PGW}(\tau^*)$ trees are colored white. Continuing the PGWA process, we find that the vertex $v$, along with these $\mathrm{PGW}(\tau^*)$ trees for $\tau\in \{ \hat\tau_1,\ldots,\hat\tau_w \}$ and their descendants, forms a $\mathbb{P}_{\alpha}$ tree rooted at $v$. Importantly, this tree is independent of the other black children of $v$ and their descendants.
	
	If $b\neq 0$, we can apply a similar analysis for each vertex $u$ in the $\mathrm{PGW}(\tau^*)$ trees associated with $\tau\in\{\tau_1,\ldots,\tau_b\}$, where $A(u)$ assumes the role of $A(v)=\lambda$. By doing so, we can establish that $u$, together with its white children and the descendants of these white children, also forms a $\mathbb{P}_{\alpha}$ tree rooted at $u$. We can repeat this analysis iteratively until we exhaust all black vertices, thus obtaining the desired conditional distribution.
\end{proof}

Lastly we record here a useful  observation.

\begin{observation}\label{obser: stochastic domination of PGWA trees}
	For any $\lambda_2>\lambda_1>1$, a tree constructed by the PGWA process starting from time $\lambda_2$ is stochastically dominated by a tree constructed by the PGWA process starting from time $\lambda_1$.
\end{observation}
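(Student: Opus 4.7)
The plan is to construct an explicit coupling of a tree $\widetilde{M}_v^{(1)}$ sampled from $\mathbb{P}_{\lambda_1}$ and a tree $\widetilde{M}_v^{(2)}$ sampled from $\mathbb{P}_{\lambda_2}$ under which $\widetilde{M}_v^{(2)}$ is realized as a weighted rooted subtree of $\widetilde{M}_v^{(1)}$ almost surely; stochastic domination then follows immediately. I would build the two PGWA processes in parallel, generation by generation, from shared randomness.

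The basic building block is a one-step coupling. For a common vertex $w$ with identical activation time $A(w)$, recall that $\mathfrak{T}_{w,A(w)}$ is driven by a rate-$(1-\theta(s))$ Poisson point process $\mathcal{P}$ on $[A(w),\infty)$ together with, for each atom $\tau$, an independent $\mathrm{PGW}(\tau^*)$ subtree and uniform $(0,\tau)$ weights on its edges. Given the two parameters $\lambda_1<\lambda_2$ at the root, I sample the $\lambda_1$-instance in full, obtaining $\mathcal{P}^{(1)}$ on $[\lambda_1,\infty)$ together with subtrees and edge weights at each atom, and then take $\mathcal{P}^{(2)} := \mathcal{P}^{(1)} \cap [\lambda_2,\infty)$, reusing the subtrees and edge weights attached to the shared atoms. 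The restriction property of Poisson processes guarantees that $\mathcal{P}^{(2)}$ is a rate-$(1-\theta(s))$ PPP on $[\lambda_2,\infty)$, so this coupling delivers $\mathfrak{T}_{w,\lambda_2}$ with its correct marginal law while realizing it as a subtree of $\mathfrak{T}_{w,\lambda_1}$.

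To globalize, I would induct on the PGWA generation $n$ of Definition~\ref{def: Poisson GW aggregation process}, maintaining the hypothesis that $G_n^{(2)} \subseteq G_n^{(1)}$ as rooted weighted trees and $\iact_n^{(2)} \subseteq \iact_n^{(1)}$. The key observation is that for any $w \in \iact_n^{(2)}$, its activation time $A(w)$ is the maximum edge weight along the unique path from $v$ to $w$; since that path lies entirely in the shared subtree, $A(w)$ takes the same value in both processes. I then apply the one-step coupling above at each such $w$, and use fresh independent randomness at every $w \in \iact_n^{(1)} \setminus \iact_n^{(2)}$. This preserves the inductive hypothesis and yields the desired containment upon taking the union over $n$. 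I expect no genuine obstacle: once one notices that activation times agree on the common part, the argument reduces to iterated PPP thinning, and correctness of the marginal laws is immediate from the independence structure built into Definitions~\ref{def: one step Poisson GW aggregation} and~\ref{def: Poisson GW aggregation process}.
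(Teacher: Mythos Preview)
Your coupling argument is correct and is precisely the natural justification the paper leaves implicit: the authors label this a ``trivial observation'' and give no proof at all. The only minor over-elaboration is that at non-root vertices $w\in\iact_n^{(2)}$ the activation times in the two processes are \emph{equal}, so the one-step constructions there are identical rather than merely nested---the Poisson thinning is needed only at the root. Otherwise your write-up is exactly what the reader is meant to supply.
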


\subsubsection{First moment of the size of $M_v$}\label{sec: pre: expected volume of M_v} 

Write $|M_v|$ for the number of vertices in $M_v$. For $z>1$, consider the subgraph of $M_v$ induced by those vertices $w\in V(M_v)$ with activation time $x(w)\leq z$ and write it as $M_v(z)$. Observe that $M_v(z)$ is a subtree since every  vertex  $w\in V(M_v)$ with  $x(w)\leq z$ can be connected to $v$ by a path lying in this subgraph. Write $|M_v(z)|$ for the size of this subtree. 

Given $u\in V(M)$, for $k\geq0$ we say that $u$ has \emph{level} $k$ in $M$ if  these are $k+1$ distinct activation times on the shortest path from $u$ to $T$. That is to say, if $u\in V(T)$, then $u$ has level zero; if $u\in V(M_v)\backslash\{v\}$ for some $v\in V(T)$ and if we sample $M_v$ using the PGWA process starting from time $x(v)$, then $u$ has level $k$ if $u\in \iact_k$.   Write $M_v^k(z)$ for the set of vertices in $M_v(z)$ with level $k$ and write $|M_v^k(z)|$ for its size. In particular, $M_v^0(z)=\{v\}$ for $z\geq x(v)$ and if we sample $M_v$ using the PGWA process, then $M_v^k(\infty)=\iact_k$. By formula (7.1) in \cite{Addario2013local_limit_MST_complete_graphs}, for all $k\geq0,z>\lambda>1$ one has that
\be\label{eq: mean of level k of M_v}
\mathbb{E}\big[ |M_v^k(z)| \mid T, v\in V(T), x(v)=\lambda\big]=n_k(\lambda,z),
\ee 
where
\be\label{eq: n_k's definition}
n_k(\lambda,z)=\int_{\lambda<x_1<\cdots<x_k<z}\prod_{i=1}^{k}\frac{1-\theta(x_i)}{1-x_i^*}~\din x_1\ldots \din x_k\,,
\ee
and we use the convention that $n_0(\lambda,z)=1$. By  \eqref{eq: mean of level k of M_v} and  $|M_v|=\sum_{k=0}^{\infty}|M_v^k(\infty)|$, we have
\be\label{eq: expectation of size of M_v}
\mathbb{E}\big[ |M_v|\mid T, v\in V(T), x(v)=\lambda \big]=\sum_{k=0}^{\infty}n_k(\lambda,\infty).
\ee

\subsection{Several important functions}
Recall that $\theta(x)=\mathbb{P}[|\mathrm{PGW}(x)|=\infty]$. It is well known that $\theta(x)=0$ if $x\in [0,1]$ and $\theta(x)>0$ if $x\in(1,\infty)$. We first collect a few properties of the function $\theta$.
\begin{lemma}\label{lem: property of theta}
	The function $\theta(x)$ has the following properties:
	\begin{enumerate}
		%		\item $\theta(x)$ is strictly increasing on $[1,\infty)$;
		%		
		\item $\theta(x)$ is concave and infinitely differentiable on $(1,\infty)$.
		
		\item $\theta(x)$ satisfies the equation
		\be\label{eq: function equation for theta}
		1-\theta(x)=e^{-x\theta(x)}. 
		\ee
		
		%		\item $\theta(x)$ is concave on $(1,\infty)$; and 
		
		\item One has that 
		\be\label{eq: approximation of theta near one}
		\theta(x)=2(x-1)-\frac{8}{3}(x-1)^2+o\big((x-1)^2\big) \, ,
		\ee
		as $x \to 1^+$.
		%		In particular, $
		%		\theta_+'(1)=2,\,\,\theta_+''(1)=-\frac{16}{3}. 
		%		$
	\end{enumerate}
	
\end{lemma}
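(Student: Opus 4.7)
The plan is to prove part~(2) first, then exploit the inverted form of \eqref{eq: function equation for theta} to get both (1) and (3) in one stroke. For (2), I would condition on the offspring count $N\sim\mathrm{Pois}(x)$ of the root of a $\mathrm{PGW}(x)$ tree: its $N$ subtrees are i.i.d.\ $\mathrm{PGW}(x)$ and the whole tree is finite iff every subtree is, so
\[
1-\theta(x) \;=\; \mathbb{E}\bigl[(1-\theta(x))^N\bigr] \;=\; \sum_{k\geq 0}\frac{e^{-x}x^k}{k!}(1-\theta(x))^k \;=\; e^{-x\theta(x)}.
\]

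Next, rearranging \eqref{eq: function equation for theta} gives $x = \varphi(\theta(x))$ on $(1,\infty)$, where
\[
\varphi(y) \;:=\; -\frac{\log(1-y)}{y} \;=\; \sum_{k\geq 0}\frac{y^k}{k+1} \;=\; 1 + \frac{y}{2} + \frac{y^2}{3} + \frac{y^3}{4}+\cdots
\]
is a power series of radius $1$ all of whose coefficients are strictly positive. Termwise differentiation then shows $\varphi$, $\varphi'$ and $\varphi''$ are all strictly positive on $(0,1)$, so $\varphi$ is $C^\infty$, strictly increasing and strictly convex. Since $\theta(x)\downarrow 0$ as $x\to 1^+$ and $\theta(x)\uparrow 1$ as $x\to\infty$, the map $\varphi:[0,1)\to[1,\infty)$ is a bijection whose inverse on $(1,\infty)$ is precisely $\theta$. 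The inverse function theorem then gives $\theta\in C^\infty((1,\infty))$, and the standard observation that the inverse of a strictly increasing convex function is concave delivers the concavity claim in (1). For (3), I would substitute the ansatz $\theta(x)=c_1(x-1)+c_2(x-1)^2+o((x-1)^2)$ into $x-1=\theta/2+\theta^2/3+O(\theta^3)$ and match orders: the $(x-1)^1$ equation gives $1=c_1/2$, hence $c_1=2$, and the $(x-1)^2$ equation gives $0=c_2/2+c_1^2/3$, hence $c_2=-8/3$, matching \eqref{eq: approximation of theta near one}.

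The only real subtlety is knowing that $\theta(x)$ really is the $\varphi^{-1}$ branch rather than the trivial solution $y\equiv 0$ of \eqref{eq: function equation for theta}; this is settled by the classical branching-process fact that for $x>1$ the generating function $s\mapsto e^{-x(1-s)}$ has a unique fixed point in $[0,1)$ whose distance from $1$ is the survival probability, forcing $\theta(x)>0$. Beyond that identification and the functional equation (2), the remaining work is routine power-series manipulation, so the main substantive content of the lemma lies in the offspring-generating-function computation of the first paragraph.
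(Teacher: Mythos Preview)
The paper omits the proof of this lemma entirely, simply referring the reader to Corollary~3.19 of van der Hofstad's textbook and noting that \eqref{eq: approximation of theta near one} refines (3.6.27) there. So there is no argument in the paper to compare against.

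Your proof is correct and self-contained. The device of inverting the functional equation to write $x=\varphi(\theta)$ with $\varphi(y)=-\log(1-y)/y=\sum_{k\ge 0}y^k/(k+1)$ is clean: positivity of every series coefficient instantly yields $\varphi,\varphi',\varphi''>0$ on $(0,1)$, so smoothness and concavity of $\theta=\varphi^{-1}$ follow by the inverse function theorem and the elementary fact that the inverse of an increasing convex function is concave. Your identification of the correct branch via the standard fixed-point characterization of the extinction probability is exactly what is needed. One small point you leave implicit in part~(3): the ansatz $\theta(x)=c_1(x-1)+c_2(x-1)^2+o((x-1)^2)$ is justified because $\varphi$ is analytic at $y=0$ with $\varphi'(0)=1/2\neq 0$, so the analytic inverse function theorem gives that $\theta$ extends analytically to a neighbourhood of $x=1$; this is worth a half-sentence since as stated $\theta$ is only defined on $(1,\infty)$.
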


The proof is simple and is omitted (the interested reader can refer to Corollary 3.19 in \cite{Hofstad2017book_volume1}. Note that  \eqref{eq: approximation of theta near one} is a slight refinement of (3.6.27) there).

\begin{lemma}\label{lem: s-dual}
	For $s=1+\epsilon>1$ one has that 
	\be\label{eq: s star approximation}
	s^*=(1+\epsilon)^*=1-\epsilon+\frac{2}{3}\epsilon^2+O(\epsilon^3) \, ,
	\ee
	as $\epsilon \to 0^+$. 	For $x\in(1,\infty)$, one has that
	\be\label{eq: function f approximation}
	\frac{1-\theta(x)}{1-x^*}=\frac{1}{x-1}-\frac{4}{3}+O\big((x-1)\big).
	\ee
	Furthermore, the function $x\mapsto \frac{1-\theta(x)}{1-x^*}$ decays to zero exponentially fast as $x$ tends to infinity.
	%	and for $k\geq 1$, \eqref{eq: n_k's definition} becomes
	%	\be\label{eq: n_k in terms of function f}
	%	n_k(\lambda,z)=\int_{\lambda<x_1<\cdots<x_k<z}\prod_{i=1}^{k}f(x_i)dx_1\ldots dx_k.
	%	\ee
\end{lemma}
\begin{proof}
	The  approximations \eqref{eq: s star approximation} and \eqref{eq: function f approximation} can be easily deduced from \eqref{eq: approximation of theta near one} and the fact that $s^*=s\big(1-\theta(s)\big)$. For the exponential decay of the function $x\mapsto \frac{1-\theta(x)}{1-x^*}$ as $x\to\infty$,   notice that $x^*\leq 2^*<1$ and $\theta(x)\ge \theta(2)>0$ for $x\ge 2$, so one has the desired exponential decay:
	\[
	\frac{1-\theta(x)}{1-x^*}\stackrel{\eqref{eq: function equation for theta}}{=}\frac{e^{-x\theta(x)}}{1-x^*}\leq \frac{e^{-x\theta(2)}}{1-2^*}.\qedhere
	\]
	%	
	%	Since for $x>1$, $x^*$ satisfies 
	%	$
	%	x^*=x-x\theta(x),
	%	$ 
	%	one has that
	%	\begin{equation*}
	%		e^{-(x-x^*)}=e^{-x\theta(x)}\stackrel{\eqref{eq: function equation for theta}}{=}1-\theta(x)
	%	\end{equation*}
	%and then \eqref{eq: n_k in terms of function f} follows from \eqref{eq: n_k's definition}. 
\end{proof}

\section{Volume growth}\label{sec: volume}
\subsection{Upper bound on the volume growth}
Our goal here is to provide the proof of the upper bound \eqref{eq: improved upper bound on the volume} from Theorem \ref{thm: improved bounds on the volume}. We first bound the conditional first moment of $|M_v|$ up to a multiplicative constant (this improves \cite[Proposition 7.3]{Addario2013local_limit_MST_complete_graphs}).
%First of all, we notice that Proposition 7.3 of \cite{Addario2013local_limit_MST_complete_graphs} can be improved.
\begin{proposition}\label{prop: expected size of M_v}
	For any $\lambda_0>1$, there exist  constants $c>0$ and $C<\infty$ depending only on $\lambda_0$ such that for all $\lambda\in(1,\lambda_0)$, 
	\[
	\frac{c}{\lambda-1}\leq 	\mathbb{E}\big[|M_v|\mid T, v\in V(T), x(v)=\lambda]\stackrel{\eqref{eq: expectation of size of M_v}}{=}\sum_{k\geq0}{n_k(\lambda,\infty)}\leq \frac{C}{\lambda-1}.
	\]
	
\end{proposition}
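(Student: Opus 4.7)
The plan is to exploit the symmetry of the integrand in \eqref{eq: n_k's definition} to collapse the sum $\sum_k n_k(\lambda,\infty)$ into the exponential of a one-dimensional integral, and then estimate that integral using the asymptotics \eqref{eq: function f approximation} near $x=1$ and the exponential tail of $1-\theta(x)$ at infinity.

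Set $f(x) := \frac{1-\theta(x)}{1-x^*}$ and $F(\lambda) := \int_{\lambda}^{\infty} f(x)\, \din x$. Since the integrand in $n_k(\lambda,\infty)$ is symmetric in $(x_1,\ldots,x_k)$, symmetrising over the $k!$ orderings gives
\[
n_k(\lambda,\infty) \;=\; \frac{1}{k!} \int_{(\lambda,\infty)^k} \prod_{i=1}^{k} f(x_i)\, \din x_1 \cdots \din x_k \;=\; \frac{F(\lambda)^k}{k!},
\]
provided $F(\lambda)<\infty$. Summing over $k$ then yields the clean identity
\[
\sum_{k\geq 0} n_k(\lambda,\infty) \;=\; e^{F(\lambda)}.
\]
So the proposition reduces to showing that $F(\lambda) = -\log(\lambda-1) + O(1)$ uniformly for $\lambda \in (1,\lambda_0)$, with constants depending only on $\lambda_0$.

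For this one-dimensional estimate, I would split the integral at some fixed threshold, say $x = (1+\lambda_0)/2$. On the lower piece $(\lambda, (1+\lambda_0)/2)$, the asymptotic \eqref{eq: function f approximation} gives $\bigl|f(x) - \tfrac{1}{x-1}\bigr| \leq K$ for a constant $K=K(\lambda_0)$, so
\[
\int_{\lambda}^{(1+\lambda_0)/2} f(x)\,\din x \;=\; -\log(\lambda-1) + O(1).
\]
On the upper piece $\bigl((1+\lambda_0)/2, \infty\bigr)$, the functional equation \eqref{eq: function equation for theta} gives $1-\theta(x) = e^{-x\theta(x)}$, which decays exponentially since $\theta(x)\to 1$ as $x\to\infty$; meanwhile $1-x^*$ is bounded below by a positive constant on this range (since $x^* \in (0,1)$ is strictly decreasing in $x$). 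Hence $\int_{(1+\lambda_0)/2}^{\infty} f(x)\,\din x$ is a finite constant depending only on $\lambda_0$. Combining the two pieces and exponentiating gives the two-sided bound.

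The only mildly delicate point is ensuring that the $O(1)$ error is uniform in $\lambda$ as $\lambda\to 1^+$; this is taken care of by \eqref{eq: function f approximation} since the error term there is continuous and bounded on any compact subinterval of $(1,\infty)$, and by the exponential tail bound at infinity. There is no real obstacle here: once the symmetrisation identity $\sum_k n_k = e^{F(\lambda)}$ is noted, the remainder is a routine computation using the asymptotics already recorded.
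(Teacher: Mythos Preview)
Your proof is correct and follows essentially the same route as the paper: symmetrise to get $\sum_k n_k(\lambda,\infty)=\exp\bigl(\int_\lambda^\infty f(x)\,\din x\bigr)$, then estimate the one-dimensional integral using \eqref{eq: function f approximation} near $1$ and exponential decay of $1-\theta$ at infinity. The only cosmetic difference is that the paper splits at $\lambda_0$ (and phrases the split as the multiplicative identity $\sum_k n_k(\lambda,\infty)=\sum_k n_k(\lambda,\lambda_0)\cdot\sum_k n_k(\lambda_0,\infty)$, invoking \cite[(7.2)]{Addario2013local_limit_MST_complete_graphs} for the second factor), whereas you split at $(1+\lambda_0)/2$; note that your threshold can fall below $\lambda$ when $\lambda$ is close to $\lambda_0$, but in that regime $\lambda-1$ is bounded away from $0$ and $F(\lambda)$ is bounded, so the estimate holds trivially.
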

\begin{proof} In what follows all constants $c,C$ depend only on $\lambda_0$ and may change from line to line. As in the proof of Proposition 7.3 in \cite{Addario2013local_limit_MST_complete_graphs}, 
	for $1<\lambda< \lambda_0$, using symmetry one has that
	\begin{eqnarray*}
		n_k(\lambda,\lambda_0)&\stackrel{\eqref{eq: n_k's definition}}{=}& \int_{\lambda<x_1<\cdots<x_k<\lambda_0}\prod_{i=1}^{k}\frac{1-\theta(x_i)}{1-(x_i)^*}~\din x_1\ldots \din x_k\nonumber\\
		&=&\frac{1}{k!} \int_{\lambda<x_i<\lambda_0,i=1,\ldots,k}\prod_{i=1}^{k}\frac{1-\theta(x_i)}{1-(x_i)^*}~\din x_1\ldots \din x_k=\frac{1}{k!}\bigg[ \int_{\lambda<x<\lambda_0}\frac{1-\theta(x)}{1-x^*}~\din x  \bigg]^k.
	\end{eqnarray*}	
	Hence 
	\[
	\sum_{k\geq0}n_k(\lambda,\lambda_0)=\exp\Big( \int_{\lambda<x<\lambda_0}\frac{1-\theta(x)}{1-x^*}~\din x \Big).
	\]
	
	% for some $\lambda_0$ close to $1$,   on the interval $(1,\lambda_0]$, 
	%	\[
	%	\frac{1-\theta(x)}{1-x^*}=\frac{1}{x-1}-\frac{4}{3}+O((x-1)).
	%	\]
	By \eqref{eq: function f approximation} for all $\lambda\in(1,\lambda_0)$ we have, 
	\be\label{eq: integration of f from lambda to lambda_zero}
	\log\frac{1}{\lambda-1}-C\leq \int_{\lambda<x<\lambda_0}\frac{1-\theta(x)}{1-x^*}~\din x\leq\log\frac{1}{\lambda-1}+C \, . 
	\ee
	Hence 
	\be\label{eq: n_k(lambda,lambda_0)}
	\frac{c}{\lambda-1}\leq \sum_{k\geq0}n_k(\lambda,\lambda_0)\leq \frac{C}{\lambda-1} \, ,
	\ee
	for all $\lambda\in(1,\lambda_0)$. 
	By (7.2)  in \cite{Addario2013local_limit_MST_complete_graphs} we have
	\be\label{eq: n_k(lambda_0, infty)}
	1\leq \sum_{k\geq0}n_k(\lambda_0,\infty)\leq C \, .
	\ee
	Also by splitting according to which $x_i's$ in the integral in \eqref{eq: n_k's definition} are at most $\lambda_0$ it is straightforward to obtain  that 
	\[
	\sum_{k\geq0}{n_k(\lambda,\infty)}=\sum_{k\geq0}{n_k(\lambda,\lambda_0)}\cdot \sum_{k\geq0}{n_k(\lambda_0,\infty)}.
	\]
	So by multiplying \eqref{eq: n_k(lambda,lambda_0)} and \eqref{eq: n_k(lambda_0, infty)} one gets the desired conclusion.% by taking $C_2=C_4C_5$. 
\end{proof}

\subsubsection{Some estimates from \cite{Addario2013local_limit_MST_complete_graphs} about the invasion percolation cluster $T$} \label{sec: subsec estimates from Louigi}
%Using the above improvement we can also improve the upper bound in Theorem 1.2 of \cite{Addario2013local_limit_MST_complete_graphs} to get  \eqref{eq: improved upper bound on the volume} in  Theorem \ref{thm: improved bounds on the volume}. We first introduce some estimates about $(T,\emptyset,W)$ from \cite{Addario2013local_limit_MST_complete_graphs}.

Recall the definitions and construction of $T$ provided in \cref{sec: subsec T}. Let $X_n$ be the weight of the $n$-th outlet $(R_n,S_{n+1})$ in $T$ and recall that $R_n\in V(P_n)$ is the endpoint of the outlet that is closer to the root $\emptyset$. 
%Write $Z_n=|V(P_n)|$ for the size of the $n$-th pond of $T$.
%\nomenclature[Zn]{$Z_n$}{$Z_n=|V(P_n)|$ is the size of the $n$-th pond of $T$.}
For $z>1$, let $I(z):=\min\{ i\colon  X_i \leq z\}$.
\nomenclature[Xn]{$X_n$}{The weight of the $n$-th outlet $(R_n,S_{n+1})$.}
\nomenclature[Iz]{$I(z)$}{The minimal index $n$ such that $X_n\leq z$.}
\begin{lemma}[Lemma 4.3 of \cite{Addario2013local_limit_MST_complete_graphs}]\label{lem: lemma 4.3 of Louigi}
	There exists constants $c>0$ and $C<\infty$ such that for all $r>1$ and $x>1$,
	\[
	\mathbb{P}\Big[\sum_{i\colon X_i>1+\frac{1}{r}}{|V(P_i)|}>xr^2\Big]\leq C\log r\cdot e^{-c\sqrt{x}}.
	\]
\end{lemma}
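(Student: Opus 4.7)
The plan is to decompose the sum by the dyadic scale of $X_i - 1$ and then combine a Chernoff bound on each scale with a tail estimate on the number of scales. By \cref{thm: outlet weight and pond size of T}, the weights $(X_n)_{n\geq 1}$ form a decreasing Markov chain obeying \eqref{eq: transition kernel for X_n}, and conditionally on $(X_n)$ the pond sizes $|V(P_n)|$ are independent with $|V(P_n)| \sim B_{X_n}$. I will partition $\{i : X_i > 1 + 1/r\}$ into dyadic shells $J_k := \{i : X_i - 1 \in (2^k/r, 2^{k+1}/r]\}$ for $k = 0, 1, 2, \ldots$ and assign each shell a budget $a_k x r^2$ with $a_k = 2^{-k-2}$, so that $\sum_k a_k \leq 1$ and the failure event $\{\sum_i |V(P_i)| > xr^2\}$ is covered by $\bigcup_k \{\sum_{i \in J_k} |V(P_i)| > a_k x r^2\}$, up to the $X_1$-truncation below.

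First I would truncate to the event $\{X_1 \leq 1 + c_0\sqrt{x}\}$. Since $X_1 = \theta^{-1}(F)$ for $F$ uniform on $(0,1)$, one has $\mathbb{P}[X_1 > 1+y] = 1 - \theta(1+y) \leq Ce^{-cy}$ uniformly in $y \geq 0$ by \eqref{eq: function equation for theta}. Hence this truncation costs only $Ce^{-c c_0 \sqrt{x}}$ and restricts the range of scales to $k \leq K$ with $K = O(\log r + \log x)$.

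Next I would bound $|J_k|$ using the near-critical behaviour of the chain. The transition kernel \eqref{eq: transition kernel for X_n} together with \eqref{eq: approximation of theta near one} gives $\theta'(y)/\theta(x) \approx 1/(x-1)$ for $x$ close to $1$, so conditionally on $X_n$, $X_{n+1}$ is asymptotically uniform on $(1, X_n)$; equivalently, $\log(X_n - 1)$ descends like a random walk with approximately $-\mathrm{Exp}(1)$ increments. It follows that $|J_k|$ is stochastically dominated by a geometric random variable with a uniform parameter $\rho < 1$, so $\mathbb{P}[|J_k| \geq m] \leq C\rho^m$ uniformly in $k$ and $r$. For the pond sizes, I anticipate a uniform exponential-moment bound $\mathbb{E}[\exp(c_1 \epsilon^2 B_\lambda)] \leq C_1$ valid for all $\lambda = 1 + \epsilon \in (1, 2)$, extracted from the explicit density of $B_\lambda$ via Stirling's formula. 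Given this, Chernoff at scale $k$ (where $\epsilon \asymp 2^k/r$) yields, conditionally on $|J_k| = m$,
\[
\mathbb{P}\Big[\sum_{i \in J_k} |V(P_i)| > a_k x r^2 \,\Big|\, |J_k| = m\Big] \leq C_1^m \exp\bigl(-c_1 x \cdot 2^{k-2}\bigr),
\]
and averaging over $|J_k|$ (with $c_1$ chosen small enough that $C_1 \rho < 1$) gives $\mathbb{P}[\sum_{i \in J_k} |V(P_i)| > a_k x r^2] \leq C \exp\bigl(-c_2 x \cdot 2^{k-2}\bigr)$.

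A union bound over the $K+1 = O(\log r + \log x)$ scales, together with the truncation cost, yields a total bound of order $(\log r + \log x)\, e^{-c_2 x/4} + C e^{-c c_0 \sqrt{x}}$, which is $\leq C \log r \cdot e^{-c\sqrt{x}}$ after noting that the conclusion is trivial for $x$ bounded, and that for larger $x$ the factor $\log x$ can be absorbed into the exponential $e^{-c\sqrt{x}}$ at the cost of a slightly smaller constant $c$. The main obstacle is establishing the uniform exponential-moment bound for $B_\lambda$: its density transitions from a polynomial ``bulk'' tail of order $\epsilon/\sqrt{t}$ for $t \ll 1/\epsilon^2$ to an exponential tail of rate proportional to $\epsilon^2$ for $t \gg 1/\epsilon^2$, and verifying via Stirling that the exponential rate has an $\epsilon$-independent prefactor is the delicate step; once it is in hand, everything else is a routine combination of Chernoff and geometric tail estimates.
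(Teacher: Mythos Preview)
The paper does not prove this lemma: it is quoted verbatim as Lemma~4.3 of \cite{Addario2013local_limit_MST_complete_graphs} in Section~\ref{sec: subsec estimates from Louigi}, with no proof or sketch given. So there is no ``paper's own proof'' to compare against; your task was really to reconstruct the argument of the cited reference.

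Your dyadic-scale decomposition together with Chernoff and a geometric bound on the occupation of each shell is the natural approach and is almost certainly what the original proof does. The two ingredients you isolate are both available in the present paper: the geometric domination of $|J_k|$ is exactly inequality~\eqref{eq: dominated by Bernoulli n 1 over 3}, and the Stirling expansion of the $B_\lambda$ density leading to $\mathbb{P}[B_{1+\epsilon}=m]\asymp \epsilon\, m^{-1/2}e^{-c\epsilon^2 m}$ is straightforward, so the uniform bound $\mathbb{E}_\lambda[e^{c_1\epsilon^2 B_\lambda}]\leq C_1$ for $\lambda\in(1,2)$ does hold with $C_1\to 1$ as $c_1\to 0$.

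There is one genuine (though easily repaired) gap. Your uniform exponential-moment bound is asserted only for $\lambda\in(1,2)$, yet after truncating to $\{X_1\leq 1+c_0\sqrt{x}\}$ you may still have shells $J_k$ with $2^k/r>1$ whenever $c_0\sqrt{x}>1$. For such $\lambda$ the bound $\mathbb{E}[e^{c_1\epsilon^2 B_\lambda}]\leq C_1$ \emph{fails}: when $\epsilon$ is large, $B_\lambda$ is concentrated near $1$ and the moment is of order $e^{c_1\epsilon^2}$. The fix is to treat these ``supercritical'' shells with a constant Chernoff parameter $s=c_1$ instead of $s=c_1(2^k/r)^2$: one has $\mathbb{E}[e^{c_1 B_\lambda}]\leq C_1'$ uniformly for $\lambda\geq 2$ (the ponds are tiny there), and since the truncation forces $2^k\leq c_0 r\sqrt{x}$ one still gets $a_k x r^2\geq r\sqrt{x}/(4c_0)\geq \sqrt{x}/(4c_0)$, which is enough. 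With this amendment the rest of your argument goes through as written.
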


Let $y_0>0$ be such that $\theta'(1+y_0)=1$; since the function $\theta$ is concave on $[1,\infty)$ and $\theta'(x)\to2$ as $x\downarrow1$ and $\theta'(x)\to0$ as $x\to \infty$, such $y_0$ is unique. Also by  concavity of the function $\theta$ and since $\theta(1)=0$ one has that $1>\theta(1+y_0)=\theta(1+y_0)-\theta(1)\geq y_0\theta'(1+y_0)=y_0$.
The following lemma is a lower tail bound on $d(\emptyset,R_{I(1+\frac{1}{r})})$ and on $X_{I(1+\frac{1}{r})}$.
\begin{lemma}[Lemma 4.4 of \cite{Addario2013local_limit_MST_complete_graphs}]\label{lem: lemma 4.4 of Louigi}
	There exists $C<\infty$ such that for all $r>\frac{1}{y_0}>1$ and all $x>0$, one has that 
	\[
	\mathbb{P}\big[d(\emptyset,R_{I(1+\frac{1}{r})})<xr\textnormal{ or }X_{I(1+\frac{1}{r})}<1+\frac{x}{r} \big]\leq Cx^{2/3}.
	\]
	(The inequality $X_{i_0}<x/r$  in \cite[Lemma 4.4]{Addario2013local_limit_MST_complete_graphs} is a typo and should be $X_{i_0}<1+x/r$.)
\end{lemma}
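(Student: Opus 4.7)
The plan is to set $\tau := I(1 + 1/r)$ and split the bad event as
\[
\mathbb{P}\Big[d(\emptyset,R_\tau) < xr\text{ or } X_\tau < 1 + \tfrac{x}{r}\Big]
\le \mathbb{P}\Big[X_\tau < 1 + \tfrac{x}{r}\Big] + \mathbb{P}\Big[d(\emptyset,R_\tau) < xr,\, X_\tau \ge 1 + \tfrac{x}{r}\Big],
\]
handling the two terms separately. For the first (overshoot) term, the Markov chain $(X_n)$ is strictly decreasing by \eqref{eq: transition kernel for X_n}, so on $\{\tau = n\}$ one has $X_{n-1} > 1+1/r$, and integrating the transition kernel yields
\[
\mathbb{P}\big[X_n < 1+\tfrac{x}{r} \,\big|\, X_{n-1}=z\big] = \frac{\theta(1+x/r)}{\theta(z)} \le \frac{\theta(1+x/r)}{\theta(1+1/r)} \lesssim x
\]
uniformly in $z > 1+1/r$, by \eqref{eq: approximation of theta near one}; averaging gives the first term.

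For the second (near-geodesic) term, I would use the lower bound $d(\emptyset, R_\tau) \ge d_{P_\tau}(S_\tau, R_\tau)$, valid because the unique $T$-geodesic from $\emptyset = S_1$ to $R_\tau$ must cross the final pond between its marked endpoints, and $T$-geodesics coincide with $M$-geodesics between vertices of $T$. Conditional on $X_\tau = 1 + w/r$ with $w \in [x,1]$, \cref{thm: outlet weight and pond size of T} gives $|P_\tau| \sim B_{1+w/r}$, and, given the size, $P_\tau$ is a uniform random labeled tree with $S_\tau,R_\tau$ an independent uniform pair of its vertices. Two classical left-tail estimates then apply: Stirling applied to the explicit Borel--Tanner density yields
\[
\mathbb{P}\big[B_{1+w/r} \le M\big] \lesssim (w/r)\sqrt{M}\quad \text{for } M \ll (r/w)^2,
\]
while a standard CRT scaling argument (or a direct Prüfer-sequence enumeration) gives $\mathbb{P}[d(u,v) < K \mid |P_\tau|=n] \lesssim K^2/n$ for two independent uniform vertices in a uniform labeled tree of size $n$. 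Splitting the expectation at the typical scale $|P_\tau| \approx (xr)^2$ and combining these two bounds yields
\[
\mathbb{P}\big[d_{P_\tau}(S_\tau,R_\tau) < xr \,\big|\, X_\tau = 1+w/r\big] \lesssim wx.
\]

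I would then integrate against the conditional law of $X_\tau$ given $X_{\tau-1}$, whose density on $(1, 1+1/r]$ is at most $\theta'(y)/\theta(X_{\tau-1}) \lesssim r$. Substituting $w = (y-1)r$ gives $\mathbb{P}[d(\emptyset,R_\tau) < xr,\, X_\tau \ge 1+x/r] \lesssim \int_x^1 wx\,dw \lesssim x$. Summing the two contributions produces bad-event probability $\lesssim x$, which is stronger than the claimed $Cx^{2/3}$ for $x \le 1$ and trivial for $x > 1$. The main obstacle I expect is establishing the two left-tail estimates with explicit uniform constants: the Borel--Tanner bound must correctly interpolate between its polynomial and Gaussian regimes near the concentration window $m \asymp (r/w)^2$, and the uniform-tree vertex-distance tail, while standard, requires either a CRT input or a careful direct enumeration. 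The ample slack in the stated $x^{2/3}$ exponent suggests that Addario-Berry's original proof may instead proceed via a cruder moment bound that sidesteps these sharp tail inputs.
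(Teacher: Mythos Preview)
The paper does not prove this lemma at all: it is quoted verbatim from \cite{Addario2013local_limit_MST_complete_graphs} (Lemma~4.4 there) in the subsection titled ``Some estimates from \cite{Addario2013local_limit_MST_complete_graphs} about the invasion percolation cluster $T$'', alongside Lemma~\ref{lem: lemma 4.3 of Louigi} and Propositions~\ref{prop: 5.3 of Louigi} and~\ref{prop: prop 5.2 of Louigi}, all of which are used as black boxes. There is therefore no proof in this paper to compare your proposal against.

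On the substance of your sketch: the overshoot bound is clean and correct---since the transition kernel of $(X_n)$ gives, for any $z>1+1/r$,
\[
\mathbb{P}\big[X_n<1+\tfrac{x}{r}\,\big|\,X_{n-1}=z,\ X_n\le 1+\tfrac{1}{r}\big]=\frac{\theta(1+x/r)}{\theta(1+1/r)}\le Cx
\]
by \eqref{eq: approximation of theta near one}, and the case $\tau=1$ is handled directly by $\mathbb{P}[X_1<1+x/r]=\theta(1+x/r)\le Cx/r$. For the second term your decomposition is the natural one, and the two tail inputs you identify (the Borel--Tanner left tail and the distance between two uniform vertices in a uniform labelled tree) are standard and of the stated order. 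The one place to be careful is the step where you sum $\sum_{n>(xr)^2}\mathbb{P}[|P_\tau|=n]\cdot(xr)^2/n$: this needs either a dyadic decomposition together with the explicit Stirling form of the $B_\lambda$ density, or a direct bound on $\mathbb{E}[|P_\tau|^{-1}]$, rather than just the single threshold split. Your own diagnosis at the end is accurate: the slack between your $O(x)$ conclusion and the stated $Cx^{2/3}$ strongly suggests the original argument in \cite{Addario2013local_limit_MST_complete_graphs} uses a coarser moment-type estimate that avoids these sharp tail computations.
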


\subsubsection{Proof of the upper bound \eqref{eq: improved upper bound on the volume} from Theorem \ref{thm: improved bounds on the volume} }

The proof of \eqref{eq: improved upper bound on the volume} is the same as the proof of the upper bound from Theorem 1.2 in \cite{Addario2013local_limit_MST_complete_graphs}, we just replace the use of \cite[Proposition 7.3]{Addario2013local_limit_MST_complete_graphs} with Proposition \ref{prop: expected size of M_v}. We provide the details for completeness. %The following details are provided for readers' convenience. 

%Details (not interesting, just a verification):\\
For $j\geq1$, let $E_{j}$ denote the event that 
\begin{align*}
	X_{I(1+\frac{1}{2^j})}\leq 1+\frac{1}{j^22^j} \quad \textnormal{or} \quad d\big(\emptyset, R_{I(1+\frac{1}{2^j})}\big)\leq \frac{2^j}{j^2} \quad
	\textnormal{or} \quad \sum_{ \{i\colon X_i>1+1/(j^22^j)\} } |V(P_i)|\geq j^82^{2j} \, . 
\end{align*}
By Lemma \ref{lem: lemma 4.3 of Louigi} and \ref{lem: lemma 4.4 of Louigi} we have $\mathbb{P}[E_{j}]\leq Cj^{-4/3}$ for all $j\geq 1$ for some $C$. Write $L:=\sup\{j\colon E_{j} \textnormal{ occurs}\}$ or $L=0$ if the set is empty.
By Borel-Cantelli $L<\infty$ almost surely. The following equality is due to \cref{thm: dynamic constrution of M} (see also (7.3) of \cite{Addario2013local_limit_MST_complete_graphs}) %on page 28 there
\begin{align}\label{eq: inequality (7.3) in Louigi}
	&\mathbb{E}\Big[ \sum_{\{i\colon X_i>1+1/(j^22^j)\}} \sum_{v\in V(P_i)} |M_v| \,\big| \, \big((X_n,P_n),n\geq 1\big) \Big]\nonumber\\
	&=\sum_{\{i\colon X_i>1+1/(j^22^j)\}}\Big(|V(P_i)|\cdot \sum_{k\geq0} n_k(X_i,\infty)\Big) \nonumber\\
	&\leq \sum_{k\geq0} n_k\big(1+1/(j^22^j),\infty\big)\cdot \sum_{\{i\colon X_i>1+1/(j^22^j)\}}|V(P_i)|
\end{align}

For fixed $j\geq 1$, if $j>L$, then  by definition of $E_{j}$ and $L$ one has that
\[
B_M\big(\emptyset,\frac{2^j}{j^2}\big)\subset \bigcup_{\{i\colon X_i>1+1/(j^22^j)\}}\bigcup_{v\in V(P_i)}M_v
\]
and
\[
\sum_{\{i\colon X_i>1+1+1/(j^22^j)\}} |V(P_i)|\leq j^82^{2j}. 
\]
Applying \eqref{eq: inequality (7.3) in Louigi} one has that
\begin{eqnarray}\label{eq: analogue of inequality (7.4) in Louigi}
	\mathbb{E}\big[|B_M(\emptyset, {2^j \over j^2})| \mid j>L  \big]
	&\leq& j^82^{2j}\cdot \sum_{k\geq0} n_k\big(1+1/(j^22^j),\infty\big)\nonumber\\
	&\leq&Cj^82^{2j}\cdot j^22^j=Cj^{10}2^{3j},
\end{eqnarray}
where the second inequality is due to Proposition \ref{prop: expected size of M_v}. 

If $\limsup_{r\to\infty}|B_M(\emptyset,r)|/\big(r^3(\log r)^{18}\big)\geq 1$ then there exist infinitely many $j\in \mathbb{N}$ such that
\[
|B_M(\emptyset,2^j/j^2)|\geq \big(2^j/j^2\big)^3\cdot j^{17.5}.
\]
Therefore by \eqref{eq: analogue of inequality (7.4) in Louigi} and the conditional Markov inequality one has that 
\begin{align*}
	\mathbb{P}\bigg[ |B_M(\emptyset,2^j/j^2)|\geq \big(2^j/j^2\big)^3\cdot j^{17.5} \textnormal{ for infinitely many }j\in\mathbb{N}\bigg]\\
	\leq \mathbb{P}[L>l]+\sum_{j>l}  \mathbb{P}\Big[ |B_M(\emptyset,2^j/j^2)|\geq \big(2^j/j^2\big)^3\cdot j^{17.5} \mid j>L\Big]\\
	\leq \mathbb{P}[L>l]+\sum_{j>l} \frac{Cj^{10}2^{3j}}{\big(2^j/j^2\big)^3\cdot j^{17.5} },
\end{align*}
and since $L$ is almost surely finite and the sum is convergent, the latter can be made arbitrarily small by choosing $l$ large. It follows that 
\[
\mathbb{P}\Big[\limsup_{r\to\infty}|B_M(\emptyset,r)|/\big(r^3(\log r)^{18}\big)\geq 1  \Big]=0,
\]
which then establishes the desired upper bound on volume growth. \qed

\subsection{Height of $M_v$}

For the proof of the lower bound \eqref{eq: lower bound on the volume} and the  effective resistance estimate in Section \ref{sec: resistance}, we need an estimate on the height of a tree sampled by the  PGWA process. Recall that for $v\in V(T)$, the number $x(v)$ is the largest weight on the unique infinite simple path in $T$ starting from $v$ and that the tree  $M_v$ is the component of $v$ in graph $\big(V(M),E(M)\backslash E(T)\big)$. 
Denote by $\mathrm{Height}(M_v)$ the height of the subtree $M_v$, i.e., $\mathrm{Height}(M_v):=\max\{d(x,v)\colon x\in V(M_v) \}$. 
\begin{proposition}\label{prop: height of M_v}
	
	%	For any fixed constant $c_1>0$, there exists a constant $c_2=c_2(c_1)>0$ such that 
	%	\be\label{eq: a lower bound for the height of M_v}
	%	\mathbb{P}\big[ H(M_v)>c_1r \mid x(v)=1+\frac{1}{r}\big]\geq \frac{c_2}{r^2}.
	%	\ee
	There exist  constants $r_0,C_1,C_2\geq 1$ such that for all $r\geq r_0$,
	\be\label{eq: a upper bound for the height of M_v}
	\mathbb{P}\Big[ \mathrm{Height}(M_v)\geq C_1r(\log r)^2 \,\,\Big|\,\,T, v\in V(T), x(v)\geq 1+\frac{1}{r}\Big]\leq \frac{C_2}{r^{3}}.
	\ee
\end{proposition}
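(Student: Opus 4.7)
My plan is to bound the height via a decomposition along the levels of the PGWA construction. By \cref{thm: dynamic constrution of M}, conditioned on $T$ and on $x(v)$, the tree $M_v$ is distributed as a PGWA tree $\widetilde{M}_v$ under $\mathbb{P}_{x(v)}$, and by \cref{obser: stochastic domination of PGWA trees} $\widetilde{M}_v$ under $\mathbb{P}_{x(v)}$ is stochastically dominated by $\widetilde{M}_v$ under $\mathbb{P}_\lambda$ for $\lambda = 1 + 1/r$. It therefore suffices to prove the height bound under $\mathbb{P}_\lambda$ with $\lambda = 1+1/r$.

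The key deterministic bound is the following. Let $L := \sup\{k \geq 0 : \iact_k \neq \emptyset\}$ be the maximum PGWA level and let $D^*$ be the maximum graph-depth over all PGW trees attached at some step of the PGWA construction of $\widetilde{M}_v$. A simple path in $\widetilde{M}_v$ from $v$ to a vertex at level $k$ crosses exactly $k$ PGWA edges (each of graph length $1$, incrementing the level by one) and visits at most $k$ PGW trees, each along a monotone root-to-descendant path of length $\leq D^*$. Hence
\[
\mathrm{Height}(\widetilde{M}_v) \;\leq\; L\bigl(1 + D^*\bigr).
\]
It suffices to show $L \leq C_3 \log r$ and $D^* \leq C_4 r \log r$, each with probability $\geq 1 - C/r^3$, for suitable constants.

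For $L$, using \eqref{eq: mean of level k of M_v} and the symmetry argument from the proof of \cref{prop: expected size of M_v},
\[
\mathbb{E}\bigl[|\iact_k|\bigr] \;=\; n_k(\lambda,\infty) \;=\; \frac{1}{k!}\left(\int_\lambda^\infty \frac{1-\theta(y)}{1-y^*}\,\din y\right)^k.
\]
Combining \eqref{eq: integration of f from lambda to lambda_zero} with the observation that $(1-\theta(y))/(1-y^*)$ is integrable on $[\lambda_0,\infty)$ (since $1-\theta(y)$ decays exponentially while $1-y^* \to 1$), the integral is at most $\log r + O(1)$. Markov's inequality then yields $\mathbb{P}[L \geq k] \leq (\log r + O(1))^k / k!$, and Stirling's formula makes this $\leq r^{-3}$ once $k = C_3 \log r$ with $C_3$ sufficiently large.

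For $D^*$, every PGW tree in the construction has parameter $\tau^*$ for some anchor weight $\tau \geq \lambda$, so using the monotonicity of $s \mapsto s^*$ on $(1,\infty)$ and \eqref{eq: s star approximation} we get $\tau^* \leq \lambda^* \leq 1 - 1/(2r)$ for $r$ large. A single such tree has depth tail $\mathbb{P}[d \geq h \mid \tau] \leq (\tau^*)^h \leq e^{-h/(2r)}$. The total number of PGW trees is at most $|\widetilde{M}_v|$ since these trees partition $V(\widetilde{M}_v)\setminus\{v\}$, and \cref{prop: expected size of M_v} bounds $\mathbb{E}[|\widetilde{M}_v|] \leq Cr$. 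Conditional on the anchor weights the PGW trees are independent, so summing tails gives
\[
\mathbb{P}[D^* \geq h] \;\leq\; \mathbb{E}\Bigl[\textstyle\sum_t \mathbf{1}\{d_t \geq h\}\Bigr] \;\leq\; C r \cdot e^{-h/(2r)},
\]
which is $\leq C/r^3$ for $h = C_4 r \log r$ with $C_4$ large enough. Combining the two estimates produces $\mathrm{Height}(\widetilde{M}_v) \leq L(1+D^*) \leq C_1 r (\log r)^2$ with probability $\geq 1 - C_2/r^3$. I expect the main conceptual obstacle to be justifying the deterministic decomposition $\mathrm{Height} \leq L(1+D^*)$ rigorously from the PGWA construction (carefully parsing that any simple path alternates between PGWA jump edges and monotone root-to-descendant segments within one PGW tree per level); once this is in hand, the tail bounds on $L$ and $D^*$ are routine applications of Markov's inequality combined with the first-moment estimates recorded in \cref{sec: pre: expected volume of M_v}.
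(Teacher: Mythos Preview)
Your proof is correct and follows essentially the same strategy as the paper's: reduce to $\mathbb{P}_{1+1/r}$, bound the number of PGWA levels by $O(\log r)$ via $n_k(\lambda,\infty)=\beta^k/k!$ and Stirling, bound each subcritical $\mathrm{PGW}(\tau^*)$ depth by $O(r\log r)$ via the tail $(\tau^*)^h\leq(1-1/(2r))^h$, and combine through the deterministic inequality $\mathrm{Height}\leq L(1+D^*)$.

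The one place where your argument is organized slightly differently is the union bound over PGW trees. The paper introduces an auxiliary event $A=\{\exists k:|\iact_k|\geq r^4\}$, controls $\mathbb{P}[A]$ by Markov using $\mathbb{E}[|\widetilde{M}_v|]\leq Cr$, and then on $A^c\cap B^c$ bounds the number of trees deterministically before applying the $r^{-7}$ depth tail. You instead bound $\mathbb{E}\bigl[\sum_t\mathbf{1}\{d_t\geq h\}\bigr]$ directly by $e^{-h/(2r)}\mathbb{E}[|\widetilde{M}_v|]$, which is marginally cleaner since it avoids the intermediate event $A$. One small caution: your phrase ``conditional on the anchor weights the PGW trees are independent'' is only literally true level by level (the number of anchors at step $k+1$ depends on the trees sampled at step $k$), so the clean justification is to condition, for each $k$, on $G_k$ together with the Poisson atoms at step $k$, bound $\mathbb{E}\bigl[\sum_{t\text{ at step }k}\mathbf{1}\{d_t\geq h\}\mid\cdot\bigr]\leq N_k\,e^{-h/(2r)}$, and then sum using $\sum_k N_k\leq|\widetilde{M}_v|$. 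This yields exactly your displayed inequality.
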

\begin{proof}
	Recall that $\mathbb{P}_\lambda$ denotes the law of a random tree $\widetilde{M}_v$ sampled by the PGWA process starting from time $\lambda$.
	By Observation \ref{obser: stochastic domination of PGWA trees} and Theorem \ref{thm: dynamic constrution of M},  it suffices to show that 
	\[
	\mathbb{P}_{1+\frac{1}{r}}\Big[ \mathrm{Height}(\widetilde{M}_v)\geq C_1r(\log r)^2 \Big]\leq \frac{C_2}{r^{3}},
	\]
	for all $r\geq r_0$ for some $r_0,C_1,C_2\geq 1$. Write 
	%where $\widetilde{M}_v$ is a tree sampled using the PGWA process starting from time $1+\frac{1}{r}$.   
	\[
	\beta=\beta(r)=\int_{1+\frac{1}{r}}^{\infty}\frac{1-\theta(x)}{1-x^*}~\din x\,.
	\]
	Then
	\[
	n_k(1+\frac{1}{r},\infty)\stackrel{\eqref{eq: n_k's definition}}{=}\frac{\beta^k}{k!}.
	\]
	Fix an arbitrary $\lambda_0>1$ as in Proposition \ref{prop: expected size of M_v}. By Lemma \ref{lem: s-dual} the function $x\mapsto \frac{1-\theta(x)}{1-x^*}$ decays to zero exponentially fast as $x\to \infty$, so $\int^{\infty}_{\lambda_0}\frac{1-\theta(x)}{1-x^*}~\din x<\infty$. 
	Using this and \eqref{eq: integration of f from lambda to lambda_zero} one  has that for $r\geq \frac{1}{\lambda_0-1}$,
	\[
	\beta=\int_{1+\frac{1}{r}}^{\lambda_0}\frac{1-\theta(x)}{1-x^*}~\din x\,+\int_{\lambda_0}^{\infty}\frac{1-\theta(x)}{1-x^*}~\din x\,=\log r+O(1). 
	\]

	%	The lower bound \eqref{eq: a lower bound for the height of M_v} is easy:
	%	To have $H(M_v)>c_1r$, it suffices that the first $\mathrm{PGW}$ tree that attached to $v$ is activated at time $s\in [\lambda,\lambda+\epsilon]$ and the first bubble (with law $\mathrm{PGW}(s^*)$) has height at least $c_1r$. Hence noticing that $(\lambda+\epsilon)^*=(1+2\epsilon)^*= 1-2\epsilon+o(\epsilon)\approx 1-\frac{2}{r}$, one has the desired \eqref{eq: a lower bound for the height of M_v}:
	%	\begin{eqnarray*}
	%		\mathbb{P}\big[ H(M_v)>c_1r \mid x(v)=1+\frac{1}{r}\big]&\geq& \bigg(\int_{\lambda}^{\lambda+\epsilon}\big(1-\theta(s)\big)ds\bigg)\times \mathbb{P}\big[ \textnormal{ height of }\mathrm{PGW}((\lambda+\epsilon)^*)  >c_1r\big]\nonumber\\
	%		&\geq& c\epsilon\times  \mathbb{P}\big[ \textnormal{ height of }\mathrm{PGW}(1-\frac{2}{r})  >c_1r\big]\nonumber\\
	%		&\geq & \frac{c}{r}\times \frac{c'}{r}=c_2\frac{1}{r^2}
	%	\end{eqnarray*}
	
	Let $A$ denote the event that in the PGWA process of sampling $\widetilde{M}_v$, there is some $k$ such that $|\iact_k|\geq r^{4}$. By Proposition \ref{prop: expected size of M_v} and Theorem \ref{thm: dynamic constrution of M},
	\be\label{eq: order of size of M_v conditioned on x(v) equals lambda}
	\mathbb{E}_{1+\frac{1}{r}}[|\widetilde{M}_v|]=\sum_{k=0}^{\infty}n_k(1+\frac{1}{r},\infty)\leq C_3r.
	\ee
	If $A$ occurs, then $|\widetilde{M}_v|\geq r^{4}$. By Markov's inequality 
	\[
	\mathbb{P}_{1+\frac{1}{r}}[A]\leq \frac{\mathbb{E}_{1+\frac{1}{r}}[|\widetilde{M}_v|]}{r^{4}}\leq \frac{C_3}{r^{3}}.
	\]
	
	Let $B$ denote the event that  there are at least $3e^2\beta$ many levels  in $\widetilde{M}_v$, i.e., that the set $\iact_{3e^2 \beta}$ is non-empty. By \eqref{eq: mean of level k of M_v} and Theorem \ref{thm: dynamic constrution of M}, 
	for $C=3e^2$, $$\mathbb{E}_{1+\frac{1}{r}}\big[ |\iact_{C\beta}|\big]=n_{C\beta}(1+\frac{1}{r},\infty)=\frac{\beta^{C\beta}}{(C\beta)!}\leq \big(\frac{e}{C}\big)^{C\beta}\frac{1}{\sqrt{2\pi C\beta}}\leq \frac{1}{e^{3\beta}}\leq \frac{C_4}{r^{3}}.$$
	Hence
	\[
	\mathbb{P}_{1+\frac{1}{r}}[B]=\mathbb{P}_{1+\frac{1}{r}}[\iact_{C\beta}\neq \emptyset]\leq \mathbb{E}_{1+\frac{1}{r}}\big[ |\iact_{C\beta}|\big]\leq \frac{C_4}{r^{3}}.
	\]
	
	In the PGWA process of sampling $\widetilde{M}_v$, each tree in $G_{k+1}\backslash G_k$ is distributed as $\mathrm{PGW}(s^*)$ for some  $s\geq 1+\frac{1}{r}$. By \eqref{eq: s star approximation} there exists $r_0>1$ such that for all $r\geq r_0$ and $s\geq 1+\frac{1}{r}$, 
	\[
	s^*\leq (1+\frac{1}{r})^*\leq 1-\frac{1}{2r}. 
	\]
	%
	% A $\mathrm{PGW}(s^*)$ tree has height at least $h$ if and only if there is some vertex in the $h$-th generation of the $\mathrm{PGW}$ tree. 
	% 
	Since the $h$-th generation of a $\mathrm{PGW}(s^*)$ tree has expected size $(s^*)^h$, one has that
	\[
	\mathbb{P}\big[\mathrm{Height}\big(\mathrm{PGW}(s^*) \big) \geq h\big]\leq (s^*)^h.
	\]
	Thus one can choose $C_5\in(0,\infty)$ such that for all $r\geq r_0$ and $s\geq 1+\frac{1}{r}$, the probability that a $\mathrm{PGW}(s^*)$ tree has height at least $C_5r\log r$ is at most 
	\[
	(s^*)^{C_5r\log r}\leq (1-\frac{1}{2r})^{C_5r\log r}\leq \frac{1}{r^{7}}.
	\]

	Now  fix a large constant $C_1$  such that for all $r\geq r_0$, 
	\[
	\frac{C_1r(\log r)^2}{3e^2\beta}\geq C_5r\log r.
	\]
	Consider the event $\{\mathrm{Height}(\widetilde{M}_v)\geq C_1r(\log r)^2\} \cap A^c\cap  B^c$. Since $\iact_{3e^2\beta}$ is empty on $B^c$, there is some $k\in[0,3e^2\beta-1]$ such that some of the trees in $G_{k+1}\backslash G_k$ has height at least $\frac{C_1r(\log r)^2}{3e^2\beta}\geq C_5r\log r.$ Also on $A^c$, for all $k\in[0,3e^2\beta-1]$ there are at most $|\iact_k|\leq r^{4}$ trees in $G_{k+1}\backslash G_k$, and each of them is distributed as $\mathrm{PGW}(s^*)$ for some $s\geq 1+\frac{1}{r}$, independently of each other. Hence by a union bound one has that 
	\be
	\mathbb{P}_{1+\frac{1}{r}}\Big[\big \{\mathrm{Height}(\widetilde{M}_v)\geq C_1r(\log r)^2 \big\}\cap A^c\cap B^c\Big]\leq r^{4}\cdot \frac{1}{r^{7}}=\frac{1}{r^{3}}.
	\ee
	Combining the above, one has the desired result:
	\begin{eqnarray*}
		&&\mathbb{P}_{1+\frac{1}{r}}\big[ \mathrm{Height}(\widetilde{M}_v)\geq C_1r(\log r)^2 \big]\nonumber\\
		&\leq& \mathbb{P}_{1+\frac{1}{r}}[A]+\mathbb{P}_{1+\frac{1}{r}}[B]+	\mathbb{P}_{1+\frac{1}{r}}\Big[ \big\{\mathrm{Height}(\widetilde{M}_v)\geq C_1r(\log r)^2 \big\}\cap A^c\cap B^c\Big]\nonumber\\
		&\leq& \frac{C_3}{r^{3}}+\frac{C_4}{r^{3}}+\frac{1}{r^{3}}\leq  \frac{C_2}{r^{3}}. \qedhere
	\end{eqnarray*}
\end{proof}

\subsection{Lower bound on the volume growth}

We now proceed towards proving the lower bound in \cref{thm: improved bounds on the volume}.

%Theorem 1.2 in \cite{Addario2013local_limit_MST_complete_graphs} also gives a lower bound on the volume growth of $B_M(\emptyset,r)$. We will give an easier proof  which works directly ``in the limit" as anticipated by Addario-Berry (\cite[Page 6, paragraph 2]{Addario2013local_limit_MST_complete_graphs}) and slightly improves the power of the logarithmic term in the lower bound. 
%
%
%\begin{remark}\label{rem: errors in Louigis proof of lower bound on volume growth}
%	The first error is inequality (7.12) on page 33 of \cite{Addario2013local_limit_MST_complete_graphs}. In fact the diameter of a $\mathrm{PGW}(1)$ tree has infinite expectation. This error might be overcome by considering   $\mathrm{diam}(\mathcal{T}(u))$ instead of $\lambda_u$. Another error is about inequality (7.16) on page 35 of \cite{Addario2013local_limit_MST_complete_graphs}, by the penultimate displayed inequality on page 34, it seems that the right hand side of (7.16) should have a factor of $\frac{1}{C}$, which resulting unpleasant result when trying to use the second moment method (see page 36-37 of \cite{Addario2013local_limit_MST_complete_graphs}). 
%\end{remark}

\subsubsection{Second moment of the size of $M_v$}
We have seen from Proposition \ref{prop: expected size of M_v} that   $\mathbb{E}[|M_v|\mid T, v\in V(T), x(v)=\lambda]$ is of order $ \frac{1}{\lambda-1}$ for $\lambda$ close to $1$. In the following  we shall show that the second moment is of order at most $\frac{1}{(\lambda-1)^4}$.
\begin{proposition}\label{prop: second moment of M_v}
	There exist constants $\lambda_0>1$ and  $C>0$ such that for all $\lambda\in(1,\lambda_0]$,
	\[
	\mathbb{E}[|M_v|^2\mid T,v\in V(T),x(v)=\lambda]\leq \frac{C}{(\lambda-1)^4}.
	\]
\end{proposition}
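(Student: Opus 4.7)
By~\cref{thm: dynamic constrution of M}, the conditional expectation equals $s(\lambda) := \mathbb{E}_\lambda[|\widetilde{M}_v|^2]$ for a tree $\widetilde{M}_v$ of law $\mathbb{P}_\lambda$, so it suffices to bound $s(\lambda)$ by $C/(\lambda-1)^4$ for $\lambda$ close to $1$. The plan is to exploit the recursive branching structure of the PGWA to derive an integral equation for $s$, convert it to a linear ODE with $\theta(\cdot)$ as integrating factor, and solve it explicitly.

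First I would decompose $\widetilde{M}_v = \{v\} \cup \bigcup_i \widetilde{M}_v^{(i)}$ along the Poisson-children $u_i$ of $v$ (whose activations $\tau_i$ form a rate-$(1-\theta(\tau))$ Poisson process on $[\lambda,\infty)$), and each $\widetilde{M}_v^{(i)}$ further splits as an initial $\mathrm{PGW}(\tau_i^*)$ block $T_i$ plus, from each vertex of $T_i$, an independent further growth of law $\mathbb{P}_{\tau_i}$. Expanding $|\widetilde{M}_v^{(i)}|^2$ via this decomposition, using the total-progeny moments $\mathbb{E}[|T_i|] = 1/(1-\tau^*)$ and $\mathbb{E}[|T_i|^2] = 1/(1-\tau^*)^3$ of a $\mathrm{PGW}(\tau^*)$ tree together with $m(\tau) := \mathbb{E}_\tau[|\widetilde{M}|] = 1/\theta(\tau)$ (which follows from~\cref{prop: expected size of M_v} and the identity $\theta'(\tau)/\theta(\tau) = (1-\theta(\tau))/(1-\tau^*)$ obtained by implicitly differentiating~\eqref{eq: function equation for theta}), and applying Campbell's formula for the compound Poisson sum over $i$, I arrive at
\[
h(\lambda) = \int_\lambda^\infty (1-\theta(\tau)) \left[ \frac{m(\tau)^2}{(1-\tau^*)^3} + \frac{h(\tau)}{1-\tau^*} \right] \din \tau,
\]
where $h(\tau) := s(\tau) - m(\tau)^2 = \mathrm{Var}_\tau(|\widetilde{M}|)$.

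Differentiating in $\lambda$ and again invoking $\theta'/\theta = (1-\theta)/(1-\lambda^*)$, one checks that $\theta(\lambda)$ is an integrating factor and the ODE reduces to $(h\theta)'(\lambda) = -\theta'(\lambda)/[\theta(\lambda)^2 (1-\lambda^*)^2]$. Integrating from $\lambda$ to $\infty$ with $h(\infty) = 0$ (since $\widetilde{M}_v$ degenerates to $\{v\}$ as $\lambda \to \infty$) yields
\[
h(\lambda) = \frac{1}{\theta(\lambda)} \int_\lambda^\infty \frac{\theta'(\tau)}{\theta(\tau)^2 (1-\tau^*)^2}\, \din \tau.
\]
Using the asymptotics $\theta(\tau) \sim 2(\tau-1)$, $\theta'(\tau) \to 2$, and $1-\tau^* \sim \tau-1$ as $\tau \downarrow 1$ from~\eqref{eq: approximation of theta near one} and~\eqref{eq: s star approximation}, the integrand is of order $(\tau-1)^{-4}$ near $1$ and bounded elsewhere, so the integral is $O((\lambda-1)^{-3})$; multiplying by $1/\theta(\lambda) = O((\lambda-1)^{-1})$ gives $h(\lambda) = O((\lambda-1)^{-4})$, and combining with $m(\lambda)^2 = O((\lambda-1)^{-2})$ yields $s(\lambda) = O((\lambda-1)^{-4})$ as desired.

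The main obstacle will be the a priori finiteness of $s(\lambda)$ for $\lambda$ close to $1$, since the domination in~\cref{prop: dominate PGWA tree by PGW tree}(a) is supercritical there and cannot directly legitimise the integral equation. I plan to handle this by truncation: set $\widetilde{M}_v^{\le z} := \{u \in \widetilde{M}_v : x(u) \le z\}$, which has finite second moment for any finite $z$; the same recursion yields the analogous ODE on $[\lambda, z]$ with boundary $h_z(z) = 0$, giving a uniform bound $\mathbb{E}_\lambda[|\widetilde{M}_v^{\le z}|^2] \le C/(\lambda-1)^4$, which monotone convergence transfers to $s(\lambda)$ as $z \to \infty$.
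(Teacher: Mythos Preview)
Your approach is correct and genuinely different from the paper's, and in several respects more elegant. Both you and the paper derive the same integral recursion for the second moment (your equation for the variance $h$ is algebraically equivalent to the paper's Lemma~3.10 after subtracting $g^2$). Where you diverge is in how the recursion is exploited: the paper bounds $h$ by a dyadic induction along $x_i = 1 + 2^{-i}(\lambda_0-1)$, carefully tuning constants so that the self-referential term $\int_\lambda \frac{1-\theta}{1-y^*} h\,\din y$ can be absorbed; you instead recognise the key identity $\theta'/\theta = (1-\theta)/(1-\lambda^*)$ (obtained by differentiating~\eqref{eq: function equation for theta}), which simultaneously yields the exact first-moment formula $g(\lambda)=1/\theta(\lambda)$ and turns the recursion into a linear first-order ODE with integrating factor $\theta$. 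This gives the closed form $h(\lambda)=\theta(\lambda)^{-1}\int_\lambda^\infty \theta'(\tau)\,\theta(\tau)^{-2}(1-\tau^*)^{-2}\,\din\tau$, from which the $(\lambda-1)^{-4}$ bound is immediate. Your route is shorter, yields exact asymptotics rather than just the order, and avoids the somewhat delicate constant-chasing in the paper's induction.

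One point to tighten: your truncation step asserts that $\widetilde{M}_v^{\le z}$ ``has finite second moment for any finite $z$,'' but this is not obvious when $\lambda$ is close to~$1$, since the domination from Proposition~\ref{prop: dominate PGWA tree by PGW tree}(b) is by a $\mathrm{PGW}(\mu)$ tree with $\mu=\lambda^*+\int_\lambda^z(1-\theta)\,\din y$, which is supercritical there. The paper handles finiteness separately via a bootstrap (Lemma~3.9). For your plan, a cleaner fix is to truncate by \emph{level} rather than activation: with $s^{(n)}(\lambda):=\mathbb{E}_\lambda[|G_n|^2]$, one has $s^{(0)}\equiv 1$, each $s^{(n)}$ is finite (the one-step aggregation adds a compound Poisson of subcritical $\mathrm{PGW}(\tau^*)$ trees, whose second moments integrate since $\int_\lambda^\infty (1-\theta)/(1-\tau^*)^3\,\din\tau<\infty$), and the recursion bounds $s^{(n+1)}$ by the same explicit solution; then let $n\to\infty$ by monotone convergence. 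With that adjustment your argument is complete.
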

\begin{remark}
	There also exist constants $\lambda_0>1$ and $c=c(\lambda_0)>0$ such that for all $\lambda\in(1,\lambda_0]$,
	\[
	\mathbb{E}[|M_v|^2\mid T,v\in V(T),x(v)=\lambda]\geq \frac{c}{(\lambda-1)^4} \, ,
	\]
	however, we will not need this lower bound in this paper, so we omit its (rather easy) proof. 
	%This lower bound is not needed for later use and hence its proof is omitted. (In fact it is much easier than the upper bound.)
\end{remark}

We first show that the second moment is finite for every $\lambda>1$.
\begin{lemma}\label{lem: finite second moment of M_v}
	For all $\lambda\in(1,\infty)$ one has that
	\[
	\mathbb{E}[|M_v|^2\mid T, v\in V(T),x(v)=\lambda]<\infty.
	\]
\end{lemma}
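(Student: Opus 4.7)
The plan is to bootstrap from a base case at large $\lambda$ by iteratively applying the decomposition of \cref{prop: dominate PGWA tree by PGW tree}(b).

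For the base case, first observe that $\lambda^* \to 0$ and $\int_\lambda^\infty (1-\theta(y))\din y \to 0$ as $\lambda \to \infty$: by \eqref{eq: function equation for theta}, $1-\theta(y) = e^{-y\theta(y)}$ decays exponentially as $y \to \infty$, so both quantities are super-polynomially small for large $\lambda$. Hence one can fix $\lambda_0 < \infty$ with $\lambda_0^* + \int_{\lambda_0}^\infty (1-\theta(y))\din y < 1$, and then for any $\lambda \geq \lambda_0$, \cref{prop: dominate PGWA tree by PGW tree}(a) dominates $\widetilde{M}_v$ by a subcritical Poisson Galton--Watson tree. The total progeny of a subcritical PGW tree has exponential moments, so $\mathbb{E}_\lambda[|\widetilde{M}_v|^2] < \infty$ for all $\lambda \geq \lambda_0$.

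The inductive step is to deduce finiteness of $\mathbb{E}_\lambda[|\widetilde{M}_v|^2]$ from that of $\mathbb{E}_\alpha[|\widetilde{M}_v|^2]$ for some $\alpha \in (\lambda, \lambda_0]$ satisfying $\mu_\alpha := \lambda^* + \int_\lambda^\alpha(1-\theta(y))\din y < 1$. Apply \cref{prop: dominate PGWA tree by PGW tree}(b) with this $\alpha$: every vertex of $\widetilde{M}_v$ lies in exactly one white subtree $\widetilde{W}_u$ attached at its nearest black ancestor $u \in B$, so $|\widetilde{M}_v| = \sum_{u \in B} |\widetilde{W}_u|$. Conditionally on $B$, the $\widetilde{W}_u$ are i.i.d.\ with law $\mathbb{P}_\alpha$, and expanding the square yields
\[
\mathbb{E}_\lambda\big[|\widetilde{M}_v|^2\big] = \mathbb{E}_\lambda[|B|] \cdot \mathbb{E}_\alpha\big[|\widetilde{M}_v|^2\big] + \mathbb{E}_\lambda\big[|B|(|B|-1)\big] \cdot \bigl(\mathbb{E}_\alpha[|\widetilde{M}_v|]\bigr)^2 \, .
\]
The mean $\mathbb{E}_\alpha[|\widetilde{M}_v|]$ is finite by \eqref{eq: expectation of size of M_v} and \cref{prop: expected size of M_v}, while $|B|$ is stochastically dominated by a $\mathrm{PGW}(\mu_\alpha)$ tree, which gives $\mathbb{E}_\lambda[|B|^2] < \infty$ since $\mu_\alpha < 1$.

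For arbitrary $\lambda \in (1, \lambda_0)$ I would construct a finite decreasing chain $\lambda_0 = \alpha_0 > \alpha_1 > \cdots > \alpha_n = \lambda$ in which each consecutive pair is legal, i.e., $\alpha_i^* + \int_{\alpha_i}^{\alpha_{i-1}}(1-\theta(y))\din y < 1$. Setting $\eta := 1 - \lambda^* > 0$ and $M := 1 - \theta(\lambda)$, the monotonicity of $\alpha \mapsto \alpha^*$ and $\alpha \mapsto 1-\theta(\alpha)$ on $[\lambda, \lambda_0]$ ensures $\alpha_i^* \leq 1 - \eta$ and $\int_{\alpha_i}^{\alpha_{i-1}}(1-\theta(y))\din y \leq M(\alpha_{i-1}-\alpha_i)$, so any step of size at most $\eta/(2M)$ is legal and finitely many steps suffice. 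Iterating the inductive step along the chain then yields $\mathbb{E}_\lambda[|\widetilde{M}_v|^2] < \infty$. The main obstacle avoided by iterating is that for $\lambda$ close to $1$, $\lambda^*$ is close to $1$, so neither part (a) alone nor a single application of (b) with large $\alpha$ produces a subcritical dominator; splitting $[\lambda, \lambda_0]$ into many small intervals distributes the ``critical mass'' so that each piece is strictly subcritical.
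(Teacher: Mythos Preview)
Your proof is correct and follows essentially the same approach as the paper: both use \cref{prop: dominate PGWA tree by PGW tree}(a) for a base case at large $\lambda$ and then \cref{prop: dominate PGWA tree by PGW tree}(b) with the black/white decomposition to step down, with the black part dominated by a subcritical $\mathrm{PGW}$ tree and the white parts i.i.d.\ $\mathbb{P}_\alpha$. The only difference is packaging---the paper argues by contradiction via $\Lambda:=\inf\{\lambda:h(\lambda)<\infty\}$ and a single step from $\Lambda+\varepsilon$ to $\Lambda-\varepsilon$, whereas you construct an explicit finite chain from $\lambda_0$ down to $\lambda$; the underlying computation (bounding $\mathbb{E}_\lambda[(\sum Y_i)^2]$ by $\mathbb{E}[N^2]\cdot\mathbb{E}_\alpha[Y^2]$) is the same.
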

\begin{proof}
	By Theorem \ref{thm: dynamic constrution of M} for $v\in V(T)$, the tree $M_v$ given $T$ and $x(v)=\lambda$ is distributed as a random tree $\widetilde{M}_v$ sampled from the PGWA process starting from time $\lambda$. Recall that the law of $\widetilde{M}_v$ is denoted by $\mathbb{P}_\lambda$.
	For simplicity we write
	\[
	h(\lambda):=\mathbb{E}_{\lambda}[|\widetilde{M}_v|^2 ]=\mathbb{E}[|M_v|^2\mid T, v\in V(T),x(v)=\lambda]
	\]
	and
	\[
	\Lambda:=\inf\{\lambda\colon \lambda\in (1,\infty), h(\lambda)<\infty\}.
	\]
	Since $h(\lambda)$ is decreasing in $\lambda$ by Observation \ref{obser: stochastic domination of PGWA trees}, it suffices to show  $\Lambda=1$.
	
	First we show that $\Lambda<\infty$. Let $\widetilde{M}_v$ be sampled from the PGWA process starting from time $\lambda$. By  Proposition \ref{prop: dominate PGWA tree by PGW tree} $\widetilde{M}_v$ is stochastically dominated by a $\mathrm{PGW}\big(\lambda^*+\int_{\lambda}^{\infty}(1-\theta(y))\din y \big)$ tree. Note that   $\lambda^*+\int_{\lambda}^{\infty}(1-\theta(y))\din y<1$ for $\lambda$ sufficiently large. We recall  formula (4.1) from \cite{Addario2013local_limit_MST_complete_graphs}: for $\lambda \in (0,1)$, the first moment and second moment of $|\mathrm{PGW}(\lambda)|$ are 
	\be\label{eq: formula4.1Louigi}
	\mathbb{E}\big[  |\mathrm{PGW}(\lambda)| \big]=\frac{1}{1-\lambda},\quad \mathbb{E}\big[  |\mathrm{PGW}(\lambda)|^2 \big]=\frac{1}{(1-\lambda)^3}.
	\ee 
	Then
	the following inequality holds for all $\lambda$ sufficiently large 
	\[
	h(\lambda)\leq \frac{1}{\Big(1-\big(\lambda^*+\int_{\lambda}^{\infty}(1-\theta(y))\din y\big)\Big)^3}<\infty.
	\]

	Next, we prove that $\Lambda=1$. Suppose the opposite, i.e., that $\Lambda>1$. In this case, we can select a sufficiently small value $\varepsilon\in(0,\Lambda-1)$ such that
	\[
	q(\Lambda,\varepsilon):=(\Lambda-\varepsilon)^*+\int_{\Lambda-\varepsilon}^{\Lambda+\varepsilon}(1-\theta(y))\din y<1.
	\]
	Now, let us examine $h(\Lambda-\varepsilon)$. We sample $\widetilde{M}_v$ from the PGWA process, initiating it at time $\Lambda-\varepsilon$. We assign colors to the vertices of $\widetilde{M}_v$ based on their activation times, as described in Proposition \ref{prop: dominate PGWA tree by PGW tree}: for each $u\in V(\widetilde{M}_v)$, if $A(u)\in [\Lambda-\varepsilon,\Lambda+\varepsilon]$, we color it black (including vertex $v$); otherwise, we color it white.
	Writing $N$ for the number of black vertices, by Proposition \ref{prop: dominate PGWA tree by PGW tree} the size  $|\widetilde{M}_v|$ is distributed as $
	\sum_{j=1}^{N}Y_i$,
	where $Y_1,Y_2,\ldots$ are i.i.d.\ r.v.'s independent of $N$ and are distributed as the size of a random tree with law   $\mathbb{P}_{\Lambda+\varepsilon}$. 
	Note that $\mathbb{E}[Y_iY_j]\leq \mathbb{E}[Y_i^2]=h(\Lambda+\varepsilon)<\infty$.  Using Proposition \ref{prop: dominate PGWA tree by PGW tree}  and  \eqref{eq: formula4.1Louigi} once again one has that
	\[
	\mathbb{E}_{\Lambda-\varepsilon}[N^2]\leq \frac{1}{\big(1-q(\Lambda,\varepsilon)\big)^3}<\infty.
	\]
	Therefore one obtains that
	\begin{eqnarray*}
		h(\Lambda-\varepsilon)&=&\mathbb{E}_{\Lambda-\varepsilon}\big[|\widetilde{M}_v|^2\big]= \mathbb{E}_{\Lambda-\varepsilon}\Big[\sum_{i=1}^{N}\sum_{j=1 }^{N}Y_iY_j\Big]\nonumber\\
		&\leq& h(\Lambda+\varepsilon)\cdot \frac{1}{\big(1-q(\Lambda,\varepsilon)\big)^3} <\infty.
	\end{eqnarray*}
	This contradicts  the definition of $\Lambda$. Hence one must have $\Lambda=1$. 
\end{proof}

\begin{lemma}\label{lem: integration equation for h(lambda)}
	For $\lambda>1$, let $\widetilde{M}_v$ be a random tree with law of $\mathbb{P}_{\lambda}$. Write $h(\lambda):=\mathbb{E}_{\lambda}[|\widetilde{M}_v|^2]$ as before  and define $g(\lambda):=\mathbb{E}_{\lambda}[|\widetilde{M}_v| ]$. 
	Then one has the following equality 
	\[
	h(\lambda)=g^2(\lambda)+\int_{\lambda}^{\infty}\frac{1-\theta(y)}{1-y^*}h(y)\din y+\int_{\lambda}^{\infty}\bigg( \frac{1-\theta(y)}{(1-y^*)^3}-\frac{1-\theta(y)}{1-y^*} \bigg)g^2(y)\din y.
	\]
\end{lemma}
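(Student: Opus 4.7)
My plan is to derive the identity by unpacking the PGWA construction at the root $v$ and applying Campbell's formula for Poisson point processes. Let $\tau_1<\tau_2<\cdots$ be the atoms of the rate-$(1-\theta(y))$ Poisson process on $[\lambda,\infty)$ produced at $v$ by the one-step PGWA of \cref{def: one step Poisson GW aggregation}, and let $S_i$ denote the subtree of $\widetilde{M}_v$ attached to $v$ through the edge of weight $\tau_i$, so that $|\widetilde{M}_v|=1+\sum_i|S_i|$. Conditional on $\tau_i=y$, the branch $S_i$ arises from a $\mathrm{PGW}(y^*)$ tree $T_i$ attached to $v$ at its root, all of whose vertices have activation time exactly $y$ (the internal weights of $T_i$ are uniform on $(0,y)$, hence strictly smaller). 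By the recursive definition in \cref{def: Poisson GW aggregation process}, together with the independence of the Poisson processes attached to distinct vertices, an independent $\mathbb{P}_y$-tree is grown from each $u\in V(T_i)$, so
\[
|S_i| \,=\, \sum_{u\in V(T_i)} Y^{(u)},
\]
where, given $T_i$ and $y$, the $(Y^{(u)})_{u\in V(T_i)}$ are i.i.d.\ with the distribution of $|\widetilde{M}|$ under $\mathbb{P}_y$.

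I then compute the conditional moments of $|S_i|$. The standard total-progeny calculation for a subcritical $\mathrm{PGW}(p)$ tree gives $\mathbb{E}[|T|]=(1-p)^{-1}$ and $\mathbb{E}[|T|^2]=(1-p)^{-3}$; applied with $p=y^*$ together with Wald's identity and its second-moment analogue this yields
\[
\mathbb{E}[|S_i|\mid\tau_i=y]=\frac{g(y)}{1-y^*},\qquad \mathbb{E}[|S_i|^2\mid\tau_i=y]=\frac{h(y)}{1-y^*}+\left(\frac{1}{(1-y^*)^3}-\frac{1}{1-y^*}\right)g(y)^2.
\]
Campbell's formula for Poisson processes then gives
\[
\mathbb{E}\Big[\sum_i|S_i|\Big]=\int_\lambda^\infty \frac{(1-\theta(y))g(y)}{1-y^*}\din y=g(\lambda)-1,\qquad \mathbb{E}\Big[\Big(\sum_i|S_i|\Big)^2\Big]=\int_\lambda^\infty (1-\theta(y))\,\mathbb{E}[|S_i|^2\mid\tau_i=y]\din y+(g(\lambda)-1)^2,
\]
the first display being just the first-moment version of the very identity being proved. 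Expanding $h(\lambda)=\mathbb{E}\big[(1+\sum_i|S_i|)^2\big]=1+2(g(\lambda)-1)+\mathbb{E}[(\sum_i|S_i|)^2]$ and using the algebraic collapse $1+2(g(\lambda)-1)+(g(\lambda)-1)^2=g(\lambda)^2$ delivers the claimed integral equation after substituting the above.

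The only delicate point is the recursive decomposition $|S_i|=\sum_{u\in V(T_i)}Y^{(u)}$: one has to verify carefully that every vertex of the $\mathrm{PGW}(y^*)$ tree freshly attached at $v$ shares the same activation time $y$, and then to invoke Poisson splitting (the restriction of a rate-$(1-\theta(\cdot))$ process to $[y,\infty)$ is again a Poisson process with the same rate) to conclude that the continuation at each such vertex is a genuinely independent $\mathbb{P}_y$-tree; beyond that point the proof is just Campbell's formula and bookkeeping. The interchange of sums and integrals needed for Campbell is justified by the finiteness provided by \cref{lem: finite second moment of M_v} (for $h$) and \cref{prop: expected size of M_v} (for $g$), together with the fact that $1-\theta(y)$ decays exponentially and $(1-y^*)^{-1}$ remains bounded on compact subsets of $(1,\infty)$.
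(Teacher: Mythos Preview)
Your proof is correct and follows essentially the same approach as the paper. Both arguments use the identical recursive decomposition $|\widetilde{M}_v|=1+\sum_i|S_i|$ with $|S_i|=\sum_{j=1}^{A_i}|M_{i,j}(\tau_i)|$ where $A_i\sim|\mathrm{PGW}(\tau_i^*)|$, and both compute the same conditional moments $\mathbb{E}[|S_i|\mid\tau_i=y]$ and $\mathbb{E}[|S_i|^2\mid\tau_i=y]$ via $\mathbb{E}[A_i]=(1-y^*)^{-1}$ and $\mathbb{E}[A_i^2]=(1-y^*)^{-3}$; the only cosmetic difference is that you invoke the second-moment Campbell formula for Poisson processes as a black box, whereas the paper conditions explicitly on the number $L$ of children and their weights $y_1,\dots,y_L$, expands the square into five cross terms, and then integrates against the Poisson intensity---both routes arrive at the same intermediate identity before collapsing $1+2(g(\lambda)-1)+(g(\lambda)-1)^2=g(\lambda)^2$.
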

\begin{proof}
	
	Let $L$ denote the number of children of $v$ in $\widetilde{M}_v$ and list its children as $v_1,\ldots,v_L$. Let $y_i$ be the weight of the edge $(v,v_i)$. 
	From the definition of the PGWA process, conditioned on $L$ and $\{y_1,\ldots,y_L\}$, one can sample $\widetilde{M}_v$ as follows: 
	\begin{itemize}
		\item Firstly, independently for each $i\in \{1,\ldots, L\}$, let  $\mathfrak{T}_i$ be a $\mathrm{PGW}(y_i^*)$ rooted at $v_i$.
		\item Secondly, for each $i\in \{1,\ldots,L\}$ and each vertex $w\in V(\mathfrak{T}_i)$ sample an independent random tree with law $\mathbb{P}_{y_i}$ and attach it to $\mathfrak{T}_i$ at $w$ (by identifying the root of the random tree with $w$).
	\end{itemize} 
	Then
	\[
	|\widetilde{M}_v|\stackrel{(d)}{=}1+\sum_{i=1}^{L} \sum_{j=1}^{A_i} |M_{i,j}(y_i)|,
	\]
	where $1$ corresponds to $v$ itself, and $A_{i}\stackrel{(d)}{=}|\mathrm{PGW}(y_i^*)|$ is the size of the $i$-th $\mathrm{PGW}$ tree $\mathfrak{T}_i$ that is attached to $v$, and $M_{i,j}(y_i)$ are independent random trees with the law of $\mathbb{P}_{y_i}$.
	Hence 
	\begin{eqnarray*}\label{eq: square of M_v's size}
		|\widetilde{M}_v|^2&\stackrel{(d)}{=}&1+\sum_{i=1}^{L} \sum_{j=1}^{A_i} |M_{i,j}(y_i)|^2+2\sum_{i=1}^{L} \sum_{j=1}^{A_i} |M_{i,j}(y_i)|+
		\sum_{i=1}^{L} \sum_{j=1}^{A_i} \sum_{j'\neq j} |M_{i,j}(y_i)|\cdot |M_{i,j'}(y_i)|\nonumber\\
		&&+\sum_{i=1}^{L} \sum_{j=1}^{A_i} \sum_{i'\neq i} \sum_{j'=1}^{A_{i'}} |M_{i,j}(y_i)|\cdot|M_{i',j'}(y_i')|
	\end{eqnarray*}
	Taking expectation  (still conditioned on $L$ and $\{y_1,\ldots,y_L\}$) and using $\mathbb{E}[A_i]=\frac{1}{1-y_i^*}$ and $\mathbb{E}(A_i^2)=\frac{1}{(1-y_i^*)^3}$ (formula \eqref{eq: formula4.1Louigi}) one has that 
	\begin{eqnarray*}
		\mathbb{E}_{\lambda}\big[|\widetilde{M}_v|^2\mid L,y_1,\ldots,y_L\big]
		&=&1+\sum_{i=1}^{L}\frac{1}{1-y_i^*}h(y_i)+2\sum_{i=1}^{L}\frac{1}{1-y_i^*}g(y_i)\nonumber\\
		&&+\sum_{i=1}^{L}\bigg( \frac{1}{(1-y_i^*)^3}-\frac{1}{1-y_i^*} \bigg)g(y_i)^2+\nonumber\\
		&&+\sum_{i=1}^{L}\sum_{i'\neq i}\frac{1}{1-y_i^*}\cdot \frac{1}{1-y_{i'}^*}g(y_i)g(y_{i'})
	\end{eqnarray*}
	Taking expectation over $L,y_1,\ldots,y_L$ one has that
	\begin{eqnarray}\label{eq: 1.4}
		h(\lambda)&=&1+\int_{\lambda}^{\infty}\frac{1-\theta(y)}{1-y^*}h(y)~\din y+2\int_{\lambda}^{\infty}\frac{1-\theta(y)}{1-y^*}g(y)~\din y\nonumber\\
		&&+\int_{\lambda}^{\infty}\bigg( \frac{1-\theta(y)}{(1-y^*)^3}-\frac{1-\theta(y)}{1-y^*} \bigg)g(y)^2~\din y\nonumber\\
		&&+\int_{\lambda}^{\infty}\int_{\lambda}^{\infty}\frac{1-\theta(y)}{1-y^*}\frac{1-\theta(z)}{1-z^*}g(y)g(z)~\din y\din z
		%	&&{\color{blue} -\int_{\lambda}^{\infty}\frac{1-\theta(y)}{(1-y^*)^2}g(y)^2dy }
	\end{eqnarray}
	Similar and simpler analysis  gives 
	\be\label{eq: integration equation for g(lambda)}
	g(\lambda)=1+\int_{\lambda}^{\infty}\frac{1-\theta(y)}{1-y^*}g(y)~\din y,
	\ee
	(this can also be verified using $g(\lambda)=\sum_{k=0}^{\infty}n_k(\lambda,\infty)$ and the definition of $n_k(\lambda,\infty)$.) Squaring the identity \eqref{eq: integration equation for g(lambda)} and combining the result with \eqref{eq: 1.4} yields the desired conclusion.
	%\be
	%h(\lambda)=g^2(\lambda)+\int_{\lambda}^{\infty}\frac{1-\theta(y)}{1-y^*}h(y)dy+\int_{\lambda}^{\infty}\bigg( \frac{1-\theta(y)}{(1-y^*)^3}-\frac{1-\theta(y)}{1-y^*} \bigg)g^2(y)dy
	%\ee
	%In particular one has that 
	%\be\label{eq: an inequality of h}
	%h(\lambda)\geq \int_{\lambda}^{\infty}\frac{1-\theta(y)}{1-y^*}h(y)dy.
	%\ee
\end{proof}

\begin{proof}[Proof of Proposition \ref{prop: second moment of M_v}] We use the notation $h(\lambda)$ and $g(\lambda)$ from the previous lemma. 
	%We still denote by $h(\lambda):=\mathbb{E}[|M_v|^2\mid T, v\in V(T),x(v)=\lambda]=\mathbb{E}_{\lambda}[|\widetilde{M}_v|^2 ]$. 
	By Observation \ref{obser: stochastic domination of PGWA trees} and Lemma \ref{lem: finite second moment of M_v} the function $h$ is a decreasing  function from $(1,\infty)$ to $(1,\infty)$. Hence it suffices to show that there exist constants $\lambda_0>1$ and $C\geq 1$ such that for all $i\geq0$
	\be\label{eq: induction step for second momnent of M_v}
	h(x_i)\leq \frac{C}{(x_i-1)^4},
	\ee
	where $(x_i)_{i\geq 0}$ is the sequence converging to $1$ such that $x_0=\lambda_0$ and $x_i = {1 + x_{i-1} \over 2}$. We begin by specifying the choice of $\lambda_0$ and $C$. First by \eqref{eq: s star approximation} there exists $\lambda_0\in(1,\infty)$ close to $1$ such that for any $y\in (1,\lambda_0]$ 
	\[
	1-y^* \geq \frac{3}{4}(y-1) \, .
	\]
	Similarly, the function $g(\lambda)$ is also decreasing on $(1,\infty)$ and since $1-\theta(y)$ decreases exponentially fast to $0$ as $y\to\infty$ (so that $1-y^*\to 1$), we can choose
	\[
	C_1=C_1(\lambda_0):=\int_{\lambda_0}^{\infty}\frac{1-\theta(y)}{(1-y^*)^3}g^2(y)~\din y\leq g^2(\lambda_0)\int_{\lambda_0}^{\infty}\frac{1-\theta(y)}{(1-y^*)^3}~\din y<\infty. 
	\]
	By Proposition \ref{prop: expected size of M_v} there exists a constant $C_2>0$ such that 
	\[
	g(\lambda)\leq \frac{C_2}{\lambda-1},\,\,\forall\,\,\lambda\in(1,\lambda_0].
	\]
	We may further choose a constant $C_3=C_3(\lambda_0)$ sufficiently large such that for all $\lambda\in \big(1,\lambda_0\big]$ one has that
	\begin{eqnarray}\label{eq: choice of a large constant C one}
		&&g^2(\lambda)+\int_{\lambda}^{\infty}\bigg( \frac{1-\theta(y)}{(1-y^*)^3}-\frac{1-\theta(y)}{1-y^*} \bigg)g^2(y)~\din y\nonumber\\
		&\leq& 	g^2(\lambda)+\int_{\lambda}^{\lambda_0}\frac{1}{(1-y^*)^3}\cdot  g^2(y)~\din y+C_1(\lambda_0)
		\nonumber\\
		&\leq& \frac{C_2^2}{(\lambda-1)^2}+\int_{\lambda}^{\lambda_0}\frac{64C_2^2}{27(y-1)^5}~\din y+C_1(\lambda_0)\leq \frac{C_3}{(\lambda-1)^4}.
	\end{eqnarray}
	Finally, by Lemma \ref{lem: finite second moment of M_v}  we can choose a constant  $C=C(\lambda_0)>0$ sufficiently large such that 
	\[
	h(\lambda_0)\leq  \frac{C}{(\lambda_0-1)^4}
	\]
	and 
	\[
	\frac{1}{1-\frac{4}{3}\log 2}\big[C_3+\frac{C}{2^4}\big]\leq C. 
	\]
	%	(Here $\log$ are just natural logarithm with base $e$.)

	Having chosen $\lambda_0$ and $C=C(\lambda_0)$, we now prove \eqref{eq: induction step for second momnent of M_v} by induction. By the requirement on $C$, the inequality \eqref{eq: induction step for second momnent of M_v} holds for $i=0$. Suppose \eqref{eq: induction step for second momnent of M_v} holds for $i-1$ ($i\geq 1$). Then by Lemma \ref{lem: integration equation for h(lambda)} one has that
	\begin{eqnarray*}
		h(x_i)&=&
		g(x_i)^2+\int_{x_i}^{\infty}\bigg( \frac{1-\theta(y)}{(1-y^*)^3}-\frac{1-\theta(y)}{1-y^*} \bigg)g^2(y)~\din y+\int_{x_i}^{\infty}\frac{1-\theta(y)}{1-y^*}h(y)~\din y\nonumber\\
		&\stackrel{\eqref{eq: choice of a large constant C one}}{\leq}& \frac{C_3}{(x_i-1)^4}+\int_{x_i}^{x_{i-1}}\frac{1-\theta(y)}{1-y^*}h(y)~\din y+\int_{x_{i-1}}^{\infty}\frac{1-\theta(y)}{1-y^*}h(y)~\din y
	\end{eqnarray*}
	By Lemma \ref{lem: integration equation for h(lambda)} we bound the last term 
	\[
	\int_{x_{i-1}}^{\infty}\frac{1-\theta(y)}{1-y^*}h(y)~\din y\leq h(x_{i-1}).
	\]
	Using this and the monotonicity of $h$ one has that 
	\[
	h(x_i)\leq \frac{C_3}{(x_i-1)^4}+h(x_i)\int_{x_i}^{x_{i-1}}\frac{1}{1-y^*}~\din y+h(x_{i-1}).
	\]
	By the choice of $\lambda_0$ one has that $1-y^*\geq \frac{3}{4}(y-1)$ on $[x_{i},x_{i-1}]$. Hence 
	\[
	h(x_i)\leq \frac{C_3}{(x_i-1)^4}+h(x_i)\cdot \frac{4}{3}\log2+h(x_{i-1}).
	\]
	By induction hypothesis $h(x_{i-1})\leq \frac{C}{(x_{i-1}-1)^4}=\frac{C}{2^4(x_i-1)^4}$. Therefore one has that 
	\[
	h(x_i)\big[1-\frac{4}{3}\log 2\big]\leq [C_3+\frac{C}{2^4}]\cdot \frac{1}{(x_i-1)^4}.
	\]
	Then by the choice of the large constant $C$, the inequality \eqref{eq: induction step for second momnent of M_v} also holds for $i$.
\end{proof}

\subsubsection{Proof of the lower bound \eqref{eq: lower bound on the volume} from Theorem \ref{thm: improved bounds on the volume}}

We begin by a rough sketch of the proof: %First we sketch the ideas of proving  \eqref{eq: lower bound on the volume}: 
\begin{enumerate}
	\item[(a)]  First we show that with high probability  the set $F:=\{ v\in V\big(B_T(\emptyset,\frac{r}{2})\big)\colon x(v)\leq 1+\frac{(\log  r)^{6}}{r} \}$ has size at least $\frac{r^2}{(\log r)^{9}}$.
	
	\item[(b)] Conditioned on $T$ and the event above, by Observation \ref{obser: stochastic domination of PGWA trees} and Theorem \ref{thm: dynamic constrution of M} each tree in $\{M_v,v\in F\}$ stochastically dominates a tree with law  $\mathbb{P}_{1+\frac{(\log  r)^{6}}{r}}$. (Recall that $\mathbb{P}_\lambda$ is the law of a tree sampled from the PGWA process starting from time $\lambda$.)
	
	\item[(c)]
	Using Paley--Zygmund inequality one has that with high probability the sum of the sizes of $\frac{r^2}{(\log r)^{9}}$ i.i.d.\ random trees with law  $\mathbb{P}_{1+\frac{(\log  r)^{6}}{r}}$ is at least $\frac{r^3}{(\log r)^{18}}$. Also at the same time all these i.i.d.\ random trees  have height at most $\frac{r}{2}$ (using Proposition \ref{prop: height of M_v}). 
	
	\item[(d)] Using (b) and (c) one has that the inequality  $|B_M(\emptyset,r)|\geq \frac{r^3}{(\log r)^{18}}$ holds with high probability. We then finish the proof by a routine application of Borel--Cantelli lemma. 
\end{enumerate}

We begin with a lemma that corresponds to the item (c).
\begin{lemma}\label{lem: paley-zygmund lower bound}
	For any $r>1$ let $m=m(r)=\frac{r^2}{(\log r)^{9}}$ and $T_1,\ldots,T_m$ be i.i.d.\ random trees with law $\mathbb{P}_{1+\frac{(\log  r)^{6}}{r}}$. Also let $A$ denote the event that no tree in $\{T_1,\ldots,T_m\}$ has height at least $\frac{r}{2}$. 
	Then there exists  constants $0<r_0,C < \infty$ such that for $r\geq r_0$,
	\[
	\mathbb{P}\bigg[  A\cap \Big\{\sum_{i=1}^{m}|V(T_i)|\geq \frac{r^3}{(\log r)^{18}} \Big\} \bigg]\geq 1-\frac{C}{(\log r)^{2}}.
	\]
\end{lemma}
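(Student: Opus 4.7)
The plan is to apply Chebyshev's inequality to $S:=\sum_{i=1}^m|V(T_i)|$ using the first and second moment bounds from Propositions \ref{prop: expected size of M_v} and \ref{prop: second moment of M_v}, and to control the height event $A^c$ via a union bound based on Proposition \ref{prop: height of M_v}. Set $\epsilon:=(\log r)^6/r$, so each $T_i$ has law $\mathbb{P}_{1+\epsilon}$. For $r$ sufficiently large that $\epsilon$ lies below the $\lambda_0-1$ thresholds in both moment propositions, one obtains $\mathbb{E}[|V(T_i)|]\geq c/\epsilon=cr/(\log r)^6$ and $\mathbb{E}[|V(T_i)|^2]\leq C/\epsilon^4=Cr^4/(\log r)^{24}$. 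Independence gives $\mathbb{E}[S]\geq cr^3/(\log r)^{15}$ and $\mathrm{Var}(S)\leq m\,\mathbb{E}[|V(T_1)|^2]\leq Cr^6/(\log r)^{33}$, so $\mathbb{E}[S]$ exceeds the target $r^3/(\log r)^{18}$ by a factor of order $(\log r)^3$ (for $r$ large), and Chebyshev yields
$$\mathbb{P}\Big[S<\frac{r^3}{(\log r)^{18}}\Big]\leq \mathbb{P}\Big[S<\tfrac{1}{2}\mathbb{E}[S]\Big]\leq\frac{4\,\mathrm{Var}(S)}{\mathbb{E}[S]^2}\leq\frac{C'}{(\log r)^3}.$$

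For the height bound I would apply Proposition \ref{prop: height of M_v} with parameter $s:=r/(\log r)^6$ in place of $r$, noting that $\mathbb{P}_{1+1/s}$ coincides with the law $\mathbb{P}_{1+\epsilon}$ of each $T_i$. For $r$ large, $s\geq r_0$ and $C_1 s(\log s)^2\leq C_1 r/(\log r)^4\leq r/2$, so the proposition gives $\mathbb{P}[\mathrm{Height}(T_i)\geq r/2]\leq C_2/s^3=C_2(\log r)^{18}/r^3$. A union bound over the $m=r^2/(\log r)^9$ trees then produces $\mathbb{P}[A^c]\leq C_2(\log r)^9/r$, which decays much faster than $1/(\log r)^2$.

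Combining the two estimates via a union bound yields the claimed lower bound $1-O(1/(\log r)^3)$ for all $r$ above some absolute threshold $r^*$; for $r\leq r^*$ the stated inequality is vacuous once $C$ is chosen with $C\geq(\log r^*)^2$, since then $1-C/(\log r)^2\leq 0$. The entire argument is essentially a power-of-$\log$ accounting exercise: the single-tree ratio $\mathbb{E}[|V(T_i)|^2]/\mathbb{E}[|V(T_i)|]^2$ is of order $r^2/(\log r)^{12}$, which divided by $m=r^2/(\log r)^9$ produces exactly the $1/(\log r)^3$ factor driving Chebyshev. There is no conceptual obstacle here; the only thing to verify is that the exponents in the statement (namely $(\log r)^6/r$ for $\epsilon$, $r^2/(\log r)^9$ for $m$, and $r^3/(\log r)^{18}$ for the target volume) have been chosen with enough slack for both the variance estimate and the height bound to run simultaneously, which they have.
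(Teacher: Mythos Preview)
Your proof is correct and follows essentially the same approach as the paper: both use the first moment bound from Proposition~\ref{prop: expected size of M_v}, the second moment bound from Proposition~\ref{prop: second moment of M_v}, and the height estimate from Proposition~\ref{prop: height of M_v} combined with a union bound. The only cosmetic difference is that the paper applies the Paley--Zygmund inequality (with $\delta=(\log r)^{-2}$) for the concentration step while you use Chebyshev; both yield the same $O((\log r)^{-3})$ error from the ratio $\mathrm{Var}(S)/(\mathbb{E}S)^2$, and your route is arguably a touch cleaner.
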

\begin{proof}
	By  Theorem \ref{thm: dynamic constrution of M} and Proposition \ref{prop: expected size of M_v} we have that $\mathbb{E}[\sum_{i=1}^{m}|V(T_i)|]$ is of order $r^3/(\log r)^{15}$.
	%  there exist $r_0>1,c_0,C_1>0$ such that for $r\geq r_0$,
	% \[
	% c_0\frac{r}{(\log r)^{6}}\leq \mathbb{E}[|V(T_i)|]\leq C_1\frac{r}{(\log r)^{6}}.
	% \]
	Next, by Theorem \ref{thm: dynamic constrution of M} and Proposition \ref{prop: second moment of M_v} we have that $\mathbb{E}[|V(T_i)|^2]$ is at most $O\big(r^4/(\log r)^{24}\big)$. Hence $\textnormal{Var}\big(\sum_{i=1}^{m}|V(T_i)|\big)=m\textnormal{Var}\big(|V(T_i)|\big)=O\big(r^6/(\log r)^{33}\big)$.
	%  there exists $C_2>0$ such that for $r\geq r_0$,
	% \[
	% \mathbb{E}[|V(T_i)|^2]\leq C_2\frac{r^4}{(\log r)^{24}}.
	% \]
	% Thus 
	% \[
	% c_0\frac{r^3}{(\log r)^{13+2\epsilon}}\leq	\mathbb{E}\big[\sum_{i=1}^{m}|V(T_i)|\big]\leq C_1\frac{r^3}{(\log r)^{13+2\epsilon}}
	% \]
	% and
	% \[
	% \textnormal{Var}\big(\sum_{i=1}^{m}|V(T_i)|\big)\leq m\mathbb{E}[|V(T_i)|^2]\leq m\frac{C_2r^4}{(\log r)^{20+4\epsilon}}=\frac{C_2r^6}{(\log r)^{28+5\epsilon}}.
	% \]
	We now put 
	\[
	\delta=\frac{1}{(\log r)^{2}}, 
	\]
	so that when $r$ is large enough we get 
	%\[
	%\delta	\mathbb{E}\big[\sum_{i=1}^{m}|V(T_i)|\big]\geq \frac{r^3}{(\log r)^{18}} \, ,
	%\]
	%when $r$ is large enough. We then get that 
	\begin{eqnarray*}
		\mathbb{P}\Big[ \sum_{i=1}^{m}|V(T_i)|\geq \frac{r^3}{(\log r)^{18}} \Big]
		&\geq&\mathbb{P}\Big[ \sum_{i=1}^{m}|V(T_i)|\geq \delta\mathbb{E}\big[\sum_{i=1}^{m}|V(T_i)|\big] \Big]\nonumber\\
		&\geq& \big(1-\delta\big)^2\frac{\Big[\mathbb{E}\big[\sum_{i=1}^{m}|V(T_i)|\big]\Big]^2}{\mathbb{E}\Big[\big(\sum_{i=1}^{m}|V(T_i)|\big)^2\Big]} 
	\end{eqnarray*}
	where the last bound follows from the  Paley--Zygmund inequality. Since 
	\[
	\frac{\Big[\mathbb{E}\big[\sum_{i=1}^{m}|V(T_i)|\big]\Big]^2}{\mathbb{E}\Big[\big(\sum_{i=1}^{m}|V(T_i)|\big)^2\Big]}=1-\frac{	\textnormal{Var}\big(\sum_{i=1}^{m}|V(T_i)|\big)}{\mathbb{E}\Big[\big(\sum_{i=1}^{m}|V(T_i)|\big)^2\Big]}\geq 1-\frac{	\textnormal{Var}\big(\sum_{i=1}^{m}|V(T_i)|\big)}{\Big[\mathbb{E}\big[\sum_{i=1}^{m}|V(T_i)|\big]\Big]^2},
	\]
	one has that 
	\be\label{eq: Paley-Zygmund application}
	\mathbb{P}\Big[ \sum_{i=1}^{m}|V(T_i)|\geq \frac{r^3}{(\log r)^{18}} \Big]
	\geq 1 - {C \over \log^2 r} \, ,
	%(1-2\delta)\big(1-\frac{C_2r^6/(\log r)^{28+5\epsilon}}{c_0^2r^6/(\log r)^{26+4\epsilon}}\big)\geq 1-\frac{C_3}{(\log r)^{1+\epsilon}}.
	\ee
	for some $C<\infty$. Next by Proposition \ref{prop: height of M_v} and the union bound we have
	\[
	\mathbb{P}\Big[\exists\, i\in\{1,\ldots,m\}\textnormal{ such that }\mathrm{Height}(T_i)\geq \frac{r}{2}\Big]\leq \frac{ C_2 m (\log r)^{18}}{r^{3}} \leq \frac{C_3}{(\log r)^2} \, .
	\]
	%Note that $\frac{r}{2}\geq C_{3.4.1}\big( \frac{r}{(\log r)^{5+\epsilon}}\big)\big(\log \frac{r}{(\log r)^{5+\epsilon}} \big)^2$ for sufficiently large $r$, where $C_{3.4.1}$ is the large constant $C_1$ from Proposition \ref{prop: height of M_v}. Thus 
	Combining this with \eqref{eq: Paley-Zygmund application} one obtains the desired conclusion.
\end{proof}

We need another ingredient from \cite{Addario2013local_limit_MST_complete_graphs} which gives a upper bound for the lower tail of $|B_T(\emptyset,r)|$. 
\begin{proposition}[Proposition 5.3 of \cite{Addario2013local_limit_MST_complete_graphs}]\label{prop: 5.3 of Louigi}
	There exist constants $c>0$ and $C<\infty$ such that for all $r> 1$ and all $x\geq 1$, 
	\[
	\mathbb{P}\big[|B_T(\emptyset,r)|\leq\frac{r^2}{x}\big]\leq C\log r\cdot e^{-cx^{1/8}}.
	\]
\end{proposition}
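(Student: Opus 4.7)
Since this proposition is stated verbatim from \cite[Proposition 5.3]{Addario2013local_limit_MST_complete_graphs} and in our paper is used only as a black-box input, the formal proof is the citation. Let me nonetheless describe the strategy I would take, following Louigi's.

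The starting point is the pond-outlet decomposition from \cref{thm: outlet weight and pond size of T}: the outlet weights $(X_n)_{n\ge1}$ form the Markov chain of \cref{def: sample IPC on PWIT}, the $n$-th pond is an independent uniform labelled tree with $B_{X_n}$ vertices, and consecutive ponds are joined by single outlet edges. My plan is to exhibit, on an event of the required probability, a \emph{single} pond entirely contained in $B_T(\emptyset,r)$ whose size alone is already at least $r^2/x$. A natural candidate is the first pond whose outlet weight drops below $1+1/r'$ for an appropriately chosen $r':=r/(Cx^{1/8})$, i.e.\ the pond $P_{I(1+1/r')}$. By the Borel-Tanner description of $B_\lambda$, its size is of order $(r')^2=r^2/(Cx^{1/8})^2$ with high probability; and since, given its size $m$, it is a uniform labelled tree, its diameter is of order $\sqrt{m}$, hence of order $r'\ll r$.

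It remains to show that this pond sits near the root, i.e.\ that $d_T(\emptyset,R_{I(1+1/r')})$ is at most a constant multiple of $r$. This is done by combining the lower-tail bound of \cref{lem: lemma 4.4 of Louigi} on the outlet-weight chain with a matching upper-tail bound on the cumulative distance to the $I(1+1/r')$-th outlet, which follows by noting that early ponds are few and small and so contribute a controlled amount to the root-to-outlet distance via the Markov structure of $(X_n)$. Putting the three events together places $P_{I(1+1/r')}$ inside $B_T(\emptyset,r)$ with size $\gtrsim r^2/x$.

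The delicate point of the argument, which I expect to be the main obstacle, is matching the stretched-exponential rate $e^{-cx^{1/8}}$. It arises by balancing three competing deviation probabilities: the outlet-chain tail from \cref{lem: lemma 4.4 of Louigi}, the lower tail of $B_\lambda$ about its mean, and the upper tail for the diameter of a uniform random labelled tree of given size. Tuning the threshold $r':=r/(Cx^{1/8})$ makes these three contributions comparable and yields the rate $e^{-cx^{1/8}}$; the $\log r$ prefactor is the harmless price of an additional dyadic scan over scales that replaces a naive union bound. The complete bookkeeping is carried out in \cite[Section 5]{Addario2013local_limit_MST_complete_graphs}.
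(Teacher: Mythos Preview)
Your proposal is correct: the paper does not prove this proposition at all but merely cites it from \cite{Addario2013local_limit_MST_complete_graphs}, and you correctly identify this up front. Your supplementary sketch of the strategy (locating a single large pond inside $B_T(\emptyset,r)$ via the pond--outlet decomposition and balancing the three tail estimates to produce the $x^{1/8}$ exponent) is a reasonable outline consistent with the machinery already quoted from that reference, though the paper itself offers nothing to compare it against beyond the bare citation.
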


\begin{proof}[Proof of  Theorem \ref{thm: improved bounds on the volume}, \eqref{eq: lower bound on the volume}]
	
	Applying Proposition \ref{prop: 5.3 of Louigi} with $x=\frac{(\log r)^{9}}{8}$ one has the following (very wasteful) estimate: there exist constants $r_0,c, C,C_1>0$ such that for all $r\geq r_0$, 
	\be\label{eq: not too few points in a ball of T}
	\mathbb{P}\Big[\big|B_T\big(\emptyset,\frac{r}{2}\big)\big|<\frac{2r^2}{(\log r)^{9}}\Big]\leq
	C \log r\cdot \exp\Big(-c\big(\frac{(\log r)^{9}}{8}\big)^{1/8}\Big)\leq \frac{C_1}{(\log r)^2} \, .
	\ee
	Similarly applying Lemma \ref{lem: lemma 4.3 of Louigi} with $x=(\log r)^{3}$ we get
	%there exist a constants $r_0,C_4>0$ such that for all $r\geq r_0$, 
	\be\label{eq: not too much points with large weights}
	\mathbb{P}\Big[ \sum_{\{i\colon X_i>1+\frac{(\log r)^{6}}{r}\}}|V(P_i)|> \frac{r^2}{(\log r)^{9}} \Big]\leq \frac{C_2}{(\log r)^2}.
	\ee
	
	Recall that for  $v\in V(T)$, if $v\in V(P_i)$ then $x(v)=X_i$. Thus  $\sum_{\{i\colon X_i>1+\frac{(\log r)^{6}}{r}\}}|V(P_i)|$ is the total number of vertices in $T$ with activation time bigger than $1+\frac{(\log r)^{6}}{r}$. Recall that $F:=\big\{ v\in V\big(B_T(\emptyset,\frac{r}{2})\big)\colon x(v)\leq 1+\frac{(\log  r)^{6}}{r} \big\}$. 
	Combining \eqref{eq: not too much points with large weights} and \eqref{eq: not too few points in a ball of T} one has that with probability at least $1-\frac{C_1+C_2}{(\log r)^2}$ the set $F$ has size at least 
	\[
	\frac{2r^2}{(\log r)^{9}}-	\frac{r^2}{(\log r)^{9}}=	\frac{r^2}{(\log r)^{9}}.
	\]
	
	As noted in item (b) of the sketch,  by Observation \ref{obser: stochastic domination of PGWA trees} and Theorem \ref{thm: dynamic constrution of M}
	on the event that $F$ has size at least $\frac{r^2}{(\log r)^{9}}$, the volume $|B_M(\emptyset,r)|$ stochastically dominates the random variable $\mathbf{1}_{A}\big(|V(T_1)|+\cdots+|V(T_m)|\big)$  from Lemma \ref{lem: paley-zygmund lower bound}. Thus, by Lemma \ref{lem: paley-zygmund lower bound}, 
	\be\label{eq: to show for the lower bound on volume growth}
	\mathbb{P}\Big[\big|B_M(\emptyset,r)\big|\leq \frac{r^3}{(\log r)^{18}}\Big]\leq \frac{C_3}{(\log r)^{2}}.
	\ee
	By the Borel--Cantelli lemma we thus have
	\[
	\liminf_{j\to\infty}\frac{\big|B_M(\emptyset,2^j)\big|}{2^{3j}/j^{18}}>0.
	\]
	Using the monotonicity of $|B_M(\emptyset,r)|$ in $r$, it follows that
	\[
	\liminf_{r\to\infty}\frac{|B_M(\emptyset,r)|}{r^3/(\log r)^{18}}>0,
	\]
	concluding the proof.
	%Since this is true for all $\epsilon>0$, one obtains the desired conclusion  \eqref{eq: lower bound on the volume}. 
\end{proof}

\section{Effective resistance}\label{sec: resistance}
For a graph $G=(V,E)$ and functions $g,h\in\mathbb{R}^V$, we define the Dirichlet quadratic form $\mathcal{E}$ by 
\be\label{eq: quadratic form}
\mathcal{E}(g,h)=\frac{1}{2}\sum_{\substack{x,y\in V\\x\sim y}}{(g(x)-g(y))(h(x)-h(y))}
\ee
Here we regard $G$ as an electrical network with a \emph{unit resistor} on each edge of $G$ and $\mathcal{E}(g,g)$ is the energy dissipation associated with the potential $g$. Let $A,B$ be disjoint subsets of $V$. We define the effective resistance between $A$ and $B$ by 
\be\label{eq: def of effective resistance}
\reff(A,B)^{-1}:=\inf\{ \mathcal{E}(g,g)\colon g\in \mathbb{R}^V, g|_{A}=1, g|_B=0 \}.
\ee

\subsection{Lower bound on effective resistance}

Our goal is to establish the following lower bound on $\reff\big(\emptyset,B_M(\emptyset,r)^c\big)$ which is linear in $r$ up to logarithmic corrections. The linear upper bound for $\reff\big(\emptyset,B_M(\emptyset,r)^c\big)$ is straightforward, since we always have $\reff\big(\emptyset,B_M(\emptyset,r)^c\big)\leq r+1$ (Lemma 2.1 in \cite{BCK2005sub_Gaussian}). 
\begin{proposition}\label{prop: effective resistance lower bound}
	Almost surely 
	\[
	\liminf_{r\to\infty} \frac{\reff\big(\emptyset,B_M(\emptyset,r)^c\big)}{r/(\log r)^{8} }=\infty.
	\]
	Furthermore, there exists $r_0=r_0(M)\in(0,\infty)$ such that almost surely for all $r\ge r_0$ and $v\in B_M\big(\emptyset,\frac{r}{(\log r)^8}\big)$,
	\[
	\reff(v,B_M(\emptyset,r)^c)\geq  \frac{r}{(\log r)^{8}} \, .
	\]
\end{proposition}

Since $M$ is one-ended and $T$ is an infinite subtree of $M$ containing the root $\emptyset$, there is a unique infinite simple path $\gamma=(\gamma_0,\gamma_1,\ldots)$  starting from $\gamma_0=\emptyset$ in $T$. The path $\gamma$ is often called the \textit{backbone} of $T$. 
\nomenclature[Gammafuture]{$\gamma=(\gamma_0,\ldots)$ }{The unique infinite simple path in $T$ that represents the end starting from the root $\gamma_0=\emptyset$.}
Let $\scom\big(\emptyset,M\backslash(\gamma_i,\gamma_{i+1})\big)$ be the finite component of the root $\emptyset$ in $M$ after deleting the edge $(\gamma_i,\gamma_{i+1})$. Similarly let $\scom\big(\emptyset,T\backslash(\gamma_i,\gamma_{i+1})\big)$ be the finite component of the root $\emptyset$ in $T\backslash (\gamma_i,\gamma_{i+1})$. Write
\[
\mathrm{Height}\Big(\scom\big(\emptyset,M\backslash(\gamma_i,\gamma_{i+1})\big)\Big):=\max\bigg\{d(\emptyset,u)\colon u\in V\Big(\scom\big(\emptyset,M\backslash(\gamma_i,\gamma_{i+1})\big)\Big)\bigg\}.
\] 

%Note that 
%\be\label{eq: a trivial lower bound on the height of M backslash gamma_i}
%\mathrm{Height}\Big(\scom\big(\emptyset,M\backslash(\gamma_i,\gamma_{i+1})\big)\Big)\geq d(\emptyset,\gamma_i).
%\ee

Recall from Section \ref{sec: subsec estimates from Louigi} that $X_i$ is the weight of the $i$-th outlet $(R_i,S_{i+1})$ of $T$, where $R_{i}$ is the endpoint that is closer to the root $\emptyset$. Also recall that $I(z):=\min\{i\colon X_i\leq z\}$ for $z>1$. 
\begin{lemma}\label{lem: almost all paths to B(r) complement use a the backbone up to r over log r}
	There exists a constant $C>0$ such that for all $r\geq 2$,
	
	\be\label{eq: height of M backslash R_I not too large}
	\mathbb{P}\bigg[\mathrm{Height}\Big(\scom\big(\emptyset,M\backslash (R_{i_0},S_{i_0+1})\big)\Big)\geq r\bigg]\leq \frac{C}{(\log r)^{4/3}},
	\ee
	where $i_0=i_0(r)=I\big(1+\frac{(\log r)^{5}}{r}\big)$.
\end{lemma}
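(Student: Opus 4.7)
Since $\scom(\emptyset,M\setminus(R_{i_0},S_{i_0+1}))$ consists of the $\emptyset$-side of $T$ together with every subtree $M_v$ attached to it, its height is
\[
H:=\max_{v\in V_0}\big[d_T(\emptyset,v)+\mathrm{Height}(M_v)\big],\qquad V_0:=\bigcup_{j\leq i_0}V(P_j).
\]
My aim is to force both $\max_{v\in V_0}d_T(\emptyset,v)\leq r/2$ and $\max_{v\in V_0}\mathrm{Height}(M_v)\leq r/2$ simultaneously, with total failure probability $\leq C/(\log r)^{4/3}$.

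\textbf{Pinning $X_{i_0}$ and heights of the $M_v$'s.} The key input is \cref{lem: lemma 4.4 of Louigi} applied with $r'=r/(\log r)^5$ (so that $i_0=I(1+1/r')$) and $x=(\log r)^{-2}$, which gives
\[
\mathbb{P}\Big[X_{i_0}<1+\tfrac{1}{s}\Big]\leq \frac{C}{(\log r)^{4/3}},\qquad s:=r/(\log r)^3.
\]
This single inequality is responsible for the whole $(\log r)^{-4/3}$ in the conclusion; every other failure below will be polynomially small in $r$. On the complementary event, every $v\in V_0$ satisfies $x(v)\geq X_{i_0}\geq 1+1/s$ since activation times are non-increasing along the backbone. \cref{prop: height of M_v} applied with parameter $s$ then yields $\mathrm{Height}(M_v)\leq C_1 s(\log s)^2\leq C_1 r/\log r\leq r/2$ except on a conditional event of probability $\leq C_2/s^3$. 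A union bound over $v\in V_0$ costs at most $|V_0|\cdot C_2/s^3$, which is controlled using \cref{lem: lemma 4.3 of Louigi} at level $1/r'$ with $x=(\log r)^4$ (yielding $\sum_{j<i_0}|V(P_j)|\leq r^2/(\log r)^6$ with super-polynomial probability) together with a Borel--Tanner tail estimate $\mathbb{P}[B_\lambda\geq m]\lesssim\exp(-cm(\lambda-1)^2)$ for the last pond $P_{i_0}$. The net cost is $O((\log r)^{7}/r)=o\big((\log r)^{-4/3}\big)$.

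\textbf{Depth in $T$.} Because the backbone of $T$ crosses each pond $P_j$ along its internal $S_j$-to-$R_j$ path, one has the deterministic bound $d_T(\emptyset,v)\leq \sum_{j\leq i_0}\mathrm{diam}(P_j)+i_0$ for every $v\in V_0$. Conditional on $|V(P_j)|$, the pond $P_j$ is a uniform random labelled tree, whose diameter has sub-Gaussian tails at scale $\sqrt{|V(P_j)|}$, so $\mathrm{diam}(P_j)\leq C\sqrt{|V(P_j)|\log r}$ uniformly in $j\leq i_0$ with failure probability polynomially small in $r$. Combining with the $\sum_j |V(P_j)|$-bound above and $i_0\leq C\log r$ (which itself has exponentially small failure probability since the chain \eqref{eq: transition kernel for X_n} makes $-\log(X_n-1)$ essentially a sum of i.i.d.\ unit-mean exponentials), Cauchy--Schwarz delivers
\[
\sum_{j\leq i_0}\mathrm{diam}(P_j)\leq C\sqrt{\log r}\cdot\sqrt{i_0\cdot\tfrac{r^2}{(\log r)^6}}\leq\frac{Cr}{(\log r)^{3/2}}\ll \frac{r}{2}.
\]

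\textbf{Main obstacle.} The $(\log r)^{-4/3}$ is forced entirely by \cref{lem: lemma 4.4 of Louigi} and appears already in the very first step; no later estimate can improve on it. The genuinely delicate nuisance is the last pond $P_{i_0}$, whose size $B_{X_{i_0}}$ can be as large as $s^2$ on $\{X_{i_0}=1+1/s\}$, but the Borel--Tanner tail bound above provides more than enough room to handle it both in the volume and in the diameter bounds.
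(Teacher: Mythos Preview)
Your argument is correct and follows the same skeleton as the paper: decompose the height as depth in $T$ plus $\max_v\mathrm{Height}(M_v)$, use \cref{lem: lemma 4.4 of Louigi} to pin $X_{i_0}\geq 1+(\log r)^3/r$ (this is the sole source of the $(\log r)^{-4/3}$), then apply \cref{prop: height of M_v} with a union bound for the $M_v$'s, and bound the $T$-depth via pond diameters.

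Two implementation differences are worth noting. First, the paper avoids treating the last pond $P_{i_0}$ separately: on the good event $\{X_{i_0}>1+(\log r)^3/r\}$ one has the inclusion $\{i\leq i_0\}\subset\{i:X_i>1+(\log r)^3/r\}$, so \cref{lem: lemma 4.3 of Louigi} and \cref{prop: prop 5.2 of Louigi} applied at scale $r/(\log r)^3$ immediately bound $\sum_{i\leq i_0}|V(P_i)|$ and $\sum_{i\leq i_0}\mathrm{diam}(P_i)$ without any Borel--Tanner tail computation. Second, the paper cites \cref{prop: prop 5.2 of Louigi} directly for $\sum_i\mathrm{diam}(P_i)$, whereas you rederive a comparable bound via sub-Gaussian diameter tails for each uniform labelled tree plus Cauchy--Schwarz; both work, but the paper's route is shorter. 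Your claim $i_0\leq C\log r$ is stated a bit loosely; the paper uses the weaker $i_0\leq (\log r)^2$ from \cref{lem: decay of I}, which (as your own Cauchy--Schwarz arithmetic shows) already suffices.
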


Assuming Lemma \ref{lem: almost all paths to B(r) complement use a the backbone up to r over log r} for now, it is easy to prove Proposition \ref{prop: effective resistance lower bound}. 
\begin{proof}[Proof of Proposition \ref{prop: effective resistance lower bound}]
	Note that all the outlets lie on the backbone $\gamma$ and 
	\[
	d(\emptyset,R_{i_0})\leq \mathrm{Height}\Big(\scom\big(\emptyset, M\backslash (R_{i_0},S_{i_0+1})\big)\Big).
	\]
	If $\mathrm{Height}\Big(\scom\big(\emptyset, M\backslash (R_{i_0},S_{i_0+1})\big)\Big)<r$, then  $d(\emptyset,R_{i_0})<r$ and  the vertex $R_{i_0}$ is on the path from $\emptyset$ to $\gamma_r$. Moreover each edge $(\gamma_i,\gamma_{i+1})$ is a cut-edge separating $\emptyset$ from $B_M(\emptyset,r)^c$ for $i=0,\ldots,d\big(\emptyset,  R_{i_0}\big)$. Therefore if $\mathrm{Height}\Big(\scom\big(\emptyset, M\backslash (R_{i_0},S_{i_0+1})\big)\Big)<r$, then by Nash-Williams inequality (for instance see Section 2.5 in \cite{LP2016}) one immediately has that \[
	\reff\big(\emptyset,B_M(\emptyset,r)^c\big)\geq d\big(\emptyset,  R_{i_0}\big).
	\]
	
	By Lemma \ref{lem: lemma 4.4 of Louigi} %there exist constants $r_0,C_1>0$ such that for all $r\geq r_0$
	\be\label{eq: R_I not too close to the root}
	\mathbb{P}\Big[d\big(\emptyset,  R_{i_0}\big) \leq \frac{r}{(\log r)^{7}}\Big]\leq \frac{C}{(\log r)^{4/3}}.
	\ee 
	Hence by \eqref{eq: R_I not too close to the root} and \eqref{eq: height of M backslash R_I not too large} %there exist  constants $r_0,C_2>0$ such that for all $r\geq r_0$
	\[
	\mathbb{P}\big[ \reff\big(\emptyset,B_M(\emptyset,r)^c\big)\leq \frac{r}{(\log r)^{7}}\big]\leq \frac{C}{(\log r)^{4/3}}.
	\] By Borel--Cantelli lemma one has that almost surely
	\[
	\liminf_{j\to\infty}\frac{\reff\big(\emptyset,B_M(\emptyset,2^j)^c\big)}{2^j/j^{7}}>0. 
	\]
	Since  $\reff\big(\emptyset,B_M(\emptyset,r)^c\big)$ is increasing in $r$ it follows that almost surely
	\[
	\liminf_{r\to\infty} \frac{\reff\big(\emptyset,B_M(\emptyset,r)^c\big)}{r/(\log r)^{7} }>0 \, ,
	\]
	so almost surely
	\[
	\liminf_{r\to\infty} \frac{\reff\big(\emptyset,B_M(\emptyset,r)^c\big)}{r/(\log r)^{8} }=\infty.
	\] 
	It also follows that there exists a random variable $r_0=r_0(M)\in(0,\infty)$ such that almost surely for all $r\ge r_0$, 
	\[
	\reff\big(\emptyset,B_M(\emptyset,r)^c\big)\geq \frac{2r}{(\log r)^8}\,.
	\]
	For  any $v\in B_M\big(\emptyset,\frac{r}{(\log r)^8}\big)$, we have that $\reff(\emptyset,v)\leq d(\emptyset,v)\leq \frac{r}{(\log r)^8}$.
	Thus by the triangle inequality for effective resistance (for instance see \cite[Exercise 2.67]{LP2016}) we have that for any $v\in B_M\big(\emptyset,\frac{r}{(\log r)^8}\big)$,
	\[
	\reff(v,B_M(\emptyset,r)^c)\geq \reff\big(\emptyset,B_M(\emptyset,r)^c\big)-\reff(\emptyset,v) \ge \frac{r}{(\log r)^{8}} \, .\qedhere
	\]
\end{proof}

It remains to prove Lemma \ref{lem: almost all paths to B(r) complement use a the backbone up to r over log r}.
We first sketch the ideas of the proof. %(Recall that $X_i$ is the weight of the $i$-th outlet of $T$ and $P_i$ is  the $i$-th pond of $T$ (see Section \ref{sec: subsec T}.))
\begin{enumerate}

	\item[(a)] \label{item: a}
	We first show that %First there exist  constants $r_0,C_1>0$ such that for all $r\geq r_0$,
	\be\label{eq: step a in lower bound of resistance}
	\mathbb{P}\bigg[\mathrm{Height}\Big(\scom\big(\emptyset,T\backslash (R_{i_0},S_{i_0+1})\big)\Big)\leq \frac{r}{2}\bigg]\geq 1-\frac{C}{(\log r)^{4/3}}. 
	\ee

	\item[(b)] Next we use Lemma \ref{lem: lemma 4.3 of Louigi} and \ref{lem: lemma 4.4 of Louigi}  to show that
	\be\label{eq: step b in lower bound of resistance}
	\mathbb{P}\bigg[ X_{i_0}>1+\frac{(\log r)^{3}}{r} \textnormal{ and } \sum_{i=1}^{i_0}|V(P_i)|\leq r^2 \bigg]\geq 1-\frac{C}{(\log r)^{4/3}}.
	\ee
	
	\item[(c)] Conditioned on the events in \eqref{eq: step a in lower bound of resistance} and \eqref{eq: step b in lower bound of resistance}, by Theorem \ref{thm: dynamic constrution of M}, Observation \ref{obser: stochastic domination of PGWA trees}, Proposition \ref{prop: height of M_v} and a union bound  one has that with high probability all the trees $M_v$ have height less than $\frac{r}{2}$, where $v$ runs over $\bigcup_{i=1}^{i_0} V(P_i)$. In particular this implies Lemma \ref{lem: almost all paths to B(r) complement use a the backbone up to r over log r}.%: i.e., with high probability, 
	% \[
	% \mathrm{Height}\Big(\scom\big(\emptyset,M\backslash (R_{i_0},S_{i_0+1})\big)\Big)\leq r. 
	% \]
\end{enumerate}

We begin with a lemma needed for the proof of \eqref{eq: step a in lower bound of resistance}. 
Recall that the conditional density of $X_{n+1}$ given that $X_n=x$ is  $\frac{\theta'(y)}{\theta(x)}$ for $y\in(1,x)$ (see \eqref{eq: transition kernel for X_n}), where $\theta(x)=\mathbb{P}\big(|\mathrm{PGW}(x)|=\infty\big)$. 
Since $\theta'(y)\to 2$ as $y\downarrow 1$, for large $n$ the ratios $\frac{X_{n+1}-1}{X_n-1}$ are approximately distributed as $\mathrm{Uniform}[0,1]$ random variables and thus the difference $X_n-1$ typically decreases by a factor of two after increasing $n$ by a constant number.
Thus $I(1+\frac{1}{r})$ is typically of order $\log r$,  and the following  estimate is straightforward to obtain (recall that $I(z):=\min\{i\colon X_i\leq z\}$). 
\begin{lemma}\label{lem: decay of I}
	There exist constants $c>0$ and $C<\infty$ such that for all $r\geq 1$,
	\[
	\mathbb{P}\big[ I(1+\frac{1}{r}) \geq (\log r)^2\big]\leq Ce^{-c(\log r)^2}.
	\]
\end{lemma}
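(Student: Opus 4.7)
The plan is to analyze the log-scaled process $L_n := -\log(X_n - 1)$ and show that its increments stochastically dominate i.i.d.\ $\mathrm{Exp}(1)$ random variables. The required tail bound will then follow from a standard large-deviations estimate for sums of exponentials.

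The first step --- and the heart of the argument --- is to compute, using the transition density \eqref{eq: transition kernel for X_n}, that for every $x > 1$ and $t > 0$,
\[
\mathbb{P}[L_{n+1}-L_n \leq t \mid X_n = x] \;=\; 1 - \frac{\theta(1 + e^{-t}(x-1))}{\theta(x)} \;\leq\; 1 - e^{-t},
\]
where the inequality uses the concavity of $\theta$ on $[1,\infty)$ together with $\theta(1)=0$ from \cref{lem: property of theta} to give $\theta(1 + s(x-1)) \geq s\,\theta(x)$ for $s = e^{-t} \in [0,1]$. By the standard inverse-CDF coupling applied to each $n$ in turn, one can then construct i.i.d.\ $E_1,E_2,\ldots \sim \mathrm{Exp}(1)$ such that $L_{n+1} - L_n \geq E_{n+1}$ almost surely, whence $L_n \geq L_1 + \sum_{k=2}^n E_k$.

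For the second step, observe that $\{I(1+1/r) \geq (\log r)^2\}$ is exactly the event $\{L_{(\log r)^2 - 1} < \log r\}$. I would split this event according to whether $L_1 \geq -\log r$ or not. On the first sub-event, the displacement $\sum_{k=2}^{(\log r)^2 - 1} E_k$ must be smaller than $2\log r$; a direct Gamma-tail (or Stirling) estimate bounds this by $e^{-c(\log r)^2 \log\log r}$. On the second sub-event, $X_1 > 1+r$, whose probability equals $1 - \theta(1+r) = e^{-(1+r)\theta(1+r)}$ by \eqref{eq: function equation for theta} and decays as $e^{-cr}$ since $\theta(1+r) \to 1$. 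Both contributions are dominated by $C e^{-c(\log r)^2}$; for $r$ in any bounded range the inequality is trivial after enlarging $C$.

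The main conceptual point is the first step: concavity of $\theta$ together with $\theta(1)=0$ yields a \emph{global} (rather than merely asymptotic) $\mathrm{Exp}(1)$ stochastic lower bound on $L_{n+1} - L_n$ that is valid at every starting point $X_n > 1$. Without this global feature one would have to worry about the initial phase where $X_1$ might be far from $1$; as it stands, the only role of the initial distribution is to control the rare event $\{X_1 > 1+r\}$, which is handled by the exponential tail of $1 - \theta$.
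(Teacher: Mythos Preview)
Your proof is correct and shares the same core insight as the paper---concavity of $\theta$ with $\theta(1)=0$ forces $X_n-1$ to decay geometrically---but the implementations differ. The paper works discretely: it first bounds $\mathbb{P}[I(2)>i]\leq(1-\theta(2))^{i-1}$ to bring $X$ below $2$ within $(\log r)^2/2$ steps, then observes that $\mathbb{P}[X_{j+1}-1\geq (X_j-1)/2\mid X_j]\leq 1/2$ (which is exactly your inequality specialized to $t=\log 2$), so the number of ``halving'' steps dominates a $\mathrm{Bin}((\log r)^2/2-1,1/2)$ variable and a Chernoff bound finishes. Your log-scale formulation is a continuous refinement: the global $\mathrm{Exp}(1)$ domination of $L_{n+1}-L_n$ subsumes the halving inequality and removes the need for a separate $I(2)$ bound, replacing it by the single tail estimate on $\{X_1>1+r\}$. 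The trade-off is that the paper's argument is slightly more elementary (Bernoulli/binomial rather than Gamma tails), while yours is cleaner and makes the uniformity in the starting point more transparent.
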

\begin{proof}
	The proof is similar to Lemma 4.2 in \cite{Addario2013local_limit_MST_complete_graphs}, and we provide the details for completeness. As noted in the proof of Lemma 4.2 in \cite{Addario2013local_limit_MST_complete_graphs}, for $i\geq 1$,
	\begin{eqnarray*}
		\mathbb{P}\big(I(2)>i\big)&\leq& \prod_{j=1}^{i-1}\sup_{y>1}\mathbb{P}\big[X_{j+1}>2\mid X_j=1+y\big]\nonumber\\
		&=& \prod_{j=1}^{i-1}\sup_{y>1}\int_{2}^{1+y}\frac{\theta'(s)}{\theta(1+y)}~\din s
		= \prod_{j=1}^{i-1}\sup_{y>1}\frac{\theta(1+y)-\theta(2)}{\theta(1+y)}\nonumber\\
		&\leq&(1-\theta(2))^{i-1}. 
	\end{eqnarray*}
	
	For $y>0$, by Lagrange's mean value theorem there exist $\xi_1\in(1+y/2,1+y)$ and $\xi_2\in(1,1+y/2)$ such that $\theta(1+y)-\theta(1+y/2)=\theta'(\xi_1)\cdot \frac{y}{2}$ and $\theta(1+y/2)=\theta'(\xi_2)\cdot \frac{y}{2}$ (since $\theta(1)=0$). Since $\theta$ is concave, $\theta'(\xi_1)\leq \theta'(\xi_2)$. Hence one has 
	that 
	\begin{eqnarray}\label{eq: dominated by Bernoulli n 1 over 3}
		\mathbb{P}\big[X_{j+1}-1\geq y/2 \mid X_j-1=y\big]&=&\frac{\theta(1+y)-\theta(1+y/2)}{\theta(1+y)}\nonumber\\
		&=&\frac{\theta(1+y)-\theta(1+y/2)}{\theta(1+y)-\theta(1+y/2)+\theta(1+y/2)}\nonumber\\
		&=&\frac{\theta'(\xi_1)\cdot \frac{y}{2}}{\theta'(\xi_1)\cdot \frac{y}{2}+\theta'(\xi_2)\cdot  \frac{y}{2}}\leq \frac{1}{2}.
	\end{eqnarray}
	By \eqref{eq: dominated by Bernoulli n 1 over 3} the number of $j\in\big[\frac{(\log r)^2}{2},(\log r)^2-2\big]$ such that $X_{j+1}-1\leq \frac{X_j-1}{2}$  dominates  a Binomial$\big(\frac{(\log r)^2}{2}-1,\frac{1}{2}\big)$ random variable. 
	
	If $I(2)\leq \frac{(\log r)^2}{2}$, then $X_{\frac{(\log r)^2}{2}}\leq 2$ by the definition of $I$. 
	If in addition the number of $j\in\big[\frac{(\log r)^2}{2},(\log r)^2-2\big]$ such that $X_{j+1}-1\leq \frac{X_j-1}{2}$ is more than $\log_2(r)$, then 
	\[
	X_{(\log r)^2-1}-1\leq \big(X_{\frac{(\log r)^2}{2}}-1\big)\cdot \big(\frac{1}{2}\big)^{\log_2(r)}
	\leq (2-1)\cdot \frac{1}{r}=\frac{1}{r}
	\]
	and then $I(1+\frac{1}{r})\leq(\log r)^2-1<(\log r)^2$.
	Therefore  
	\begin{eqnarray*}
		\mathbb{P}\big[ I(1+\frac{1}{r}) \geq (\log r)^2\big]&\leq&	\mathbb{P}\big(I(2)>\frac{(\log r)^2}{2}\big)+
		\mathbb{P}\Big[ \mathrm{Bin}\big(\frac{(\log r)^2}{2}-1,\frac{1}{2}\big)\leq \log_2(r) \Big]\\
		&\leq& \big(1-\theta(2)\big)^{\frac{(\log r)^2}{2}-1}+
		\mathbb{P}\Big[ \mathrm{Bin}\big(\frac{(\log r)^2}{2}-1,\frac{1}{2}\big)\leq \log_2(r) \Big]\\
		&\leq& Ce^{-c(\log r)^2},
	\end{eqnarray*}
	where the Chernoff--Cram\'{e}r theorem on large deviations (for instance see  \cite{Durrett_book_5th_edition}, (2.7.2) and Lemma 2.7.2) is used in the last step.
\end{proof}

Another ingredient for the proof of \eqref{eq: step a in lower bound of resistance} is the following assertion, giving a tail bound on the sum of the diameters of ponds in $T$.
\begin{proposition}[Proposition 5.2 of \cite{Addario2013local_limit_MST_complete_graphs}]
	\label{prop: prop 5.2 of Louigi}
	There exist constants $c>0$ and $C<\infty$ such that for all $x\geq 1$ and all $r> 1$, 
	\[
	\mathbb{P}\Big[\sum_{\{i\colon X_i>1+\frac{1}{r}\} } \mathrm{diam}(P_i)\geq xr \Big]\leq C\log r\cdot e^{-c\sqrt{x}}.
	\]
\end{proposition}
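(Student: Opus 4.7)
The plan is to deduce this pond-diameter estimate from the companion pond-volume estimate \cref{lem: lemma 4.3 of Louigi} by combining it with concentration for the diameter of a uniformly random labelled tree. By \cref{thm: outlet weight and pond size of T}, conditionally on their sizes $Z_i = |V(P_i)|$, the ponds $(P_i)_{i\geq 1}$ are independent uniformly random labelled trees, so their diameters decouple cleanly from the outlet weights; and by \cref{lem: decay of I}, the number of ponds in the sum is $N_r := I(1+1/r) \le (\log r)^2$ except on an event of probability $Ce^{-c(\log r)^2}$, which is negligible compared with the target rate $Ce^{-c\sqrt{x}}$.

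The key analytic input is the classical concentration estimate for a uniform labelled tree $\tau_n$ on $n$ vertices,
\[
\mathbb{P}\bigl[\mathrm{diam}(\tau_n) \ge t\sqrt{n}\bigr] \le C e^{-c t^2}, \qquad t \ge 1,
\]
which follows from the R\'enyi--Szekeres asymptotics, or from the convergence of $\tau_n$, suitably rescaled, to the continuum random tree together with Gaussian tail bounds for the CRT height. Combining this with the explicit tail bound $\mathbb{P}[B_\lambda > k] \le C e^{-c(\lambda-1)^2 k}$ for the size-biased Borel--Tanner distribution $B_\lambda$ (read off from its density via Stirling's formula) gives, for each pond, $\mathbb{P}[\mathrm{diam}(P_i) > s \mid X_i = \lambda] \le C e^{-c s^2 (\lambda - 1)^2}$ for $s \ge 1/(\lambda - 1)$. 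I would then apply Cauchy--Schwarz: on the event $\{N_r \le (\log r)^2\} \cap \{\sum_i Z_i \le y r^2\}$, with the second probability controlled by \cref{lem: lemma 4.3 of Louigi}, one has $\sum_i \mathrm{diam}(P_i) \le (\max_i t_i) \sqrt{N_r \sum_i Z_i} \le r \log r \cdot \sqrt{y} \cdot \max_i t_i$, where the $t_i$'s are independent Gaussian-tailed variables representing the normalized diameters of the individual ponds.

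The main difficulty is the final balance of rates: a crude Cauchy--Schwarz followed directly by \cref{lem: lemma 4.3 of Louigi} loses a factor of $\log r$ in the exponent and yields only $e^{-cx/\log r}$. Recovering the sharper $e^{-c\sqrt{x}}$ rate requires exploiting the two independent sources of fluctuation---the within-pond diameter and the pond sizes themselves---and partitioning the ponds dyadically according to the scale of $X_i - 1$, treating small-$(X_i-1)$ ponds (whose sizes are large but well-concentrated around $1/(X_i-1)^2$) separately from the others. A careful bookkeeping, using the exponential tails of $B_\lambda$ and the sub-Gaussian tails of the tree-diameter fluctuations, then produces the claimed rate.
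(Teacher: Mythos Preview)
The paper under review does not prove this proposition at all: it is quoted verbatim from \cite{Addario2013local_limit_MST_complete_graphs} (as Proposition~5.2 there) and used as a black box. So there is no ``paper's own proof'' to compare against; any comparison would have to be with Addario-Berry's original argument, which is not reproduced here.

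As for your proposal itself: the ingredients you list are the natural ones, and the reduction via \cref{thm: outlet weight and pond size of T} to independent uniform labelled trees conditioned on their sizes is correct. Your invocation of \cref{lem: decay of I} is also fine (there is no circularity, even though it appears later in this paper). The genuine gap is exactly where you flag it: you correctly observe that the naive Cauchy--Schwarz route costs a power of $\log r$ in the exponent, and then you assert that a dyadic decomposition over the scale of $X_i-1$ together with ``careful bookkeeping'' recovers the sharp $e^{-c\sqrt{x}}$ rate---but you do not carry this out. That step is the actual content of the proposition, and saying it can be done is not the same as doing it. In particular, to get the stated bound you need to show that on each dyadic scale $X_i-1\in(2^{-k-1},2^{-k}]$ the contribution to $\sum_i\mathrm{diam}(P_i)$ has sub-Gaussian tails at scale $2^k$ (combining the $B_\lambda$ tail with the tree-diameter tail), and then sum these contributions; the balance between the number of scales and the per-scale concentration is what produces the $\sqrt{x}$ exponent. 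Until that computation is written down, the proposal is a plausible outline, not a proof.
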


\begin{proof}[Proof of Lemma \ref{lem: almost all paths to B(r) complement use a the backbone up to r over log r}]
	We follow the sketch given earlier. Recall that $i_0=i_0(r)=I\big(1+\frac{(\log r)^{5}}{r}\big)$.
	By Lemma \ref{lem: lemma 4.4 of Louigi}, %there exist  constants $r_0,C_1>0$ such that for all $r\geq r_0$
	\be\label{eq: X_I not too close to 1}
	\mathbb{P}\Big[ X_{i_0} \leq 1+\frac{(\log r)^{3}}{r}\Big]\leq \frac{C}{(\log r)^{4/3}}. 
	\ee
	If 	$ X_{i_0} > 1+\frac{(\log r)^{3}}{r}$, then using the fact that $X_i$ is decreasing in $i$ one has that
	\[
	\sum_{i=1}^{ i_0 } \mathrm{diam}(P_i)\leq \sum_{\{i\colon X_i>1+\frac{(\log r)^{3}}{r} \}} \mathrm{diam}(P_i). 
	\]
	By Proposition \ref{prop: prop 5.2 of Louigi} applied with $x=(\log r)^{3}/4$ we get %there exist constants $C_2,c_3,C_4>0$ such that for all $r\geq r_0$
	\[
	\mathbb{P}\Big[\sum_{\{i\colon X_i>1+\frac{(\log r)^{3}}{r} \} } \mathrm{diam}(P_i)\geq \frac{r}{4} \Big]\leq C\big( \log \frac{r}{(\log r)^{3}}\big)\cdot e^{-c\sqrt{(\log r)^{3}/4}} \leq \frac{C_1}{(\log r)^{4/3}}. 
	\]	
	Hence %for all $r\geq r_0$ 
	\[
	\mathbb{P}\bigg[\sum_{i=1}^{ i_0 } \mathrm{diam}(P_i)\geq \frac{r}{4} \bigg]\leq \frac{C}{(\log r)^{4/3}}. 
	\]	
	By Lemma \ref{lem: decay of I},  %there exist constants $r_0,C_5>0$ such that for all $r\geq r_0$ one has that
	\[
	\mathbb{P}\Big[  i_0 \geq \frac{r}{4} \Big]
	\leq \frac{C}{(\log r)^{4/3}}. 
	\]
	Since the invasion tree $T$ can be viewed as the concatenation of ponds via the outlets, we get
	\[
	\mathrm{Height}\Big(\scom\big(\emptyset,T\backslash (R_{i_0},S_{i_0+1})\big)\Big)\leq
	i_0+\sum_{i=1}^{ i_0} \mathrm{diam}(P_i) \, ,
	\]
	therefore %for all $r$ sufficiently large and $C_6=C_1+C_4+C_5$, one has that 
	\be\label{eq: height of component in T deleting an edge is not too large}
	\mathbb{P}\Big[ 	\mathrm{Height}\Big(\scom\big(\emptyset,T\backslash (R_{i_0},S_{i_0+1})\big)\Big) \geq \frac{r}{2}  \Big]\leq \frac{C}{(\log r)^{4/3}}. 
	\ee

	By \eqref{eq: X_I not too close to 1} and the fact that $X_i$ is decreasing in $i$ one has that $ X_{i} > 1+\frac{(\log r)^{3}}{r}$ for all $i\in\{1,\ldots, i_0\}$ with probability at least $1-\frac{C_1}{(\log r)^{4/3}}$. 
	
	We now handle the volume of the ponds similarly. If $X_{i_0} > 1+\frac{(\log r)^{3}}{r}$, then as before
	\[
	\sum_{i=1}^{ i_0 } |V(P_i)|\leq \sum_{\{i\colon X_i>1+\frac{(\log r)^{3}}{r}\} } |V(P_i)|. 
	\]
	By Lemma \ref{lem: lemma 4.3 of Louigi}, %there exist constants $r_0,c_7,C_8,C_9>0$ such that for all $r\geq r_0$ 
	\[
	\mathbb{P}\Big[ \sum_{\{i\colon X_i>1+\frac{(\log r)^{3}}{r}  \}} |V(P_i)|\geq r^2 \Big]\leq C\big(\log \frac{r}{ (\log r)^{3} } \big)e^{-c\sqrt{ (\log r)^{6} }}\leq \frac{1}{(\log r)^{4/3}}.
	\]
	By this and \eqref{eq: X_I not too close to 1} we get %there exist  constants $r_0>0,C_{10}=C_1+C_9>0$ such that for all $r\geq r_0$
	\be\label{eq: sum of size the first izero ponds are not too large}
	\mathbb{P}\bigg[\sum_{i=1}^{ i_0 } |V(P_i)|\geq r^2 \bigg]\leq \frac{C}{(\log r)^{4/3}}. 
	\ee
	
	Recall that for each $v\in V(P_i)$ one has that $x(v)=X_i$. If 	
	$X_{i_0} > 1+\frac{(\log r)^{3}}{r}$, then one has that $x(v)>1+\frac{(\log r)^{3}}{r}$ for all $v\in \bigcup_{i=1}^{i_0}V(P_i)$. Conditioned on $X_{i_0} > 1+\frac{(\log r)^{3}}{r}$, by Theorem \ref{thm: dynamic constrution of M} and Observation \ref{obser: stochastic domination of PGWA trees}, for all $v\in \bigcup_{i=1}^{i_0}V(P_i)$ the corresponding tree $M_v$ is stochastically dominated by a random tree with law $\mathbb{P}_{1+\frac{(\log r)^{3}}{r}}$. Hence by Proposition \ref{prop: height of M_v} we have %there exist constants $r_0,C_{11},C_{12}>0$ such that for all $r\geq r_0$,
	\[
	\mathbb{P}\Big[ \mathrm{Height}(M_v)\geq \frac{r}{2}\,\big|\, T,X_{i_0} > 1+\frac{(\log r)^{3}}{r}, v\in \bigcup_{i=1}^{i_0}V(P_i)\Big]
	\leq \frac{C (\log r)^9}{r^3} \, .%\leq \frac{C_{12}}{r^2(\log r)^2}.
	\]
	%since for all $r$ sufficiently large \[
	%\frac{r}{2}\geq C_{3.5.1}\big(\frac{r}{(\log r)^{2+\epsilon}}\big)\cdot \Big(\log \big(\frac{r}{(\log r)^{2+\epsilon}}\big) \Big)^2.
	%\]

	Now we are ready to obtain the desired conclusion. If 
	\[
	\mathrm{Height}\Big(\scom\big(\emptyset,T\backslash (R_{i_0},S_{i_0+1})\big)\Big)\leq \frac{r}{2} \textnormal{ and }\mathrm{Height}\Big(\scom\big(\emptyset,M\backslash(R_{i_0},S_{i_0+1})\big)\Big)\geq r,
	\] then there exists a vertex $v\in \bigcup_{i=1}^{i_0}V(P_i)$ with $\mathrm{Height}(M_v)\geq \frac{r}{2}$. If furthermore one has that $X_{i_0} > 1+\frac{(\log r)^{3}}{r}$ and $\sum_{i=1}^{ i_0 } |V(P_i)|\leq r^2$, then by a union bound the probability that at least one of these $M_v$'s have height at least $r/2$ is at most $r^2\cdot\frac{C (\log r)^9}{r^3} \ll \frac{1}{(\log r)^{4/3}}$. Hence %for all $r$ sufficiently large one has that
	\begin{eqnarray*}
		\mathbb{P}\Big[\mathrm{Height}\Big(\scom\big(\emptyset,M\backslash (R_{i_0},S_{i_0+1})\big)\Big)\geq r\Big]
		&\leq&		\mathbb{P}\Big[ X_{i_0} \leq 1+\frac{(\log r)^{3}}{r}\Big]+	\mathbb{P}\bigg[\sum_{i=1}^{ i_0 } |V(P_i)|\geq r^2 \bigg]\nonumber\\
		&&+\mathbb{P}\Big[ 	\mathrm{Height}\Big(\scom\big(\emptyset,T\backslash (R_{i_0},S_{i_0+1})\big)\Big) \geq \frac{r}{2}  \Big]\nonumber\\
		&&+\frac{1}{(\log r)^{4/3}}
		\leq  \frac{C}{(\log r)^{4/3}},
	\end{eqnarray*}
	where in the last inequality we used the inequalities \eqref{eq: X_I not too close to 1}, \eqref{eq: height of component in T deleting an edge is not too large} and \eqref{eq: sum of size the first izero ponds are not too large}.\end{proof}

\section{Spectral and diffusive properties of $M$}\label{sec: spectral and diffusive properties}

In most random fractals, knowing the growth of the volume and effective resistance allows one to sharply estimate certain random walk asymptotics such as the spectral dimension and typical displacement. These relationships were established in a series of studies \cite{BCK2005sub_Gaussian,BJKS2008RW_IIC_oriented_percolation,BarlowKumagaiTrees06} and were further generalized and synthesized by Kumagai and Misumi in \cite{Kumagai_Misumi2008}. In this section we only apply results from \cite{Kumagai_Misumi2008}. 

Since we do not have sharp enough tail estimates on the volume growth and effective resistance, we cannot use the main theorem from \cite{Kumagai_Misumi2008} directly. Nevertheless, we can derive the following \cref{thm: KM2008 deterministic setting} for deterministic graphs based on \cite{Kumagai_Misumi2008}. This theorem will then be employed to prove both \cref{thm: spectral dimension} and \cref{thm: diffusive property}.

Suppose $G$ is an infinite, locally finite, connected graph with a marked vertex denoted by $0$, and let $d=d_G$ be the graph distance on $G$. Write $B(R)=B_G(0,R)$ for the ball with radius $R$ centered at $0$ and let 
\[
V(R):=\sum_{x\in B(R)}\deg(x). 
\]
Let $(Y_n)$ be a (discrete-time) simple random walk on $G$ and recall the transition density $p_n(x,y)$ is defined as 
\[
p_n(x,y):=P^x(Y_n=y)/\deg(y). 
\]

The following theorem collects several estimates proved in \cite{Kumagai_Misumi2008}; we provide a brief proof below.

\begin{theorem}\label{thm: KM2008 deterministic setting}
	Suppose $G$ is a deterministic, infinite, locally finite, connected graph with a marked vertex $0\in G$ and $d(\cdot,\cdot)$ is the graph distance on $G$. If there exist constants $\eta\ge 0$, $D\ge 1$, $C\geq 1$ and $\alpha\in(0,1]$ such that for all $R\geq 2$ 
	\be\label{eq:volume growth assumption in adaption of KM2008}
	C^{-1} (\log R)^{-\eta} R^D \le  V(R) \leq C (\log R)^{\eta}R^D, 
	\ee
	\be\label{eq:two-point effective resistance assumption in KM2008}
	\reff(0,y)\leq C (\log R)^{\eta}d(0,y)^{\alpha},\,\forall \,y\in B(R),
	\ee 
	and 
	\be\label{eq:effective resistance assumption in adaption of KM2008}
	\reff(x,B(R)^c) \geq C^{-1} (\log R)^{-\eta}R^{\alpha},\quad \forall\, x \in B\Big(0,\frac{R}{C(\log R)^{\eta}}\Big) \, ,
	\ee
	then there exist constants $\beta=\beta(\eta,D,\alpha) \in(0,\infty)$ and $C_1\geq 1$ such that the following statements hold\footnote{As mentioned in Remark~\ref{rem: other random walk asymptotics}, other random walk asymptotics such as exit time of balls and the range of the walk can also be derived and we omit the details.}:
	\begin{enumerate}
		\item[(a)] The transition density satisfies 
		\[
		C_1^{-1} (\log n)^{-\beta} n^{-\frac{D}{D+\alpha}}	\leq p_{2n}(0,0)\leq C_1(\log n)^{\beta} n^{-\frac{D}{D+\alpha}} \, ,
		\]
		for any $n\geq 2$. In particular $d_s(G)=\frac{2D}{D+\alpha}$. 
		\item[(b)] For any $n\ge 2$,
		\[
		P^0\Big[ C_1^{-1}(\log n)^{-\beta}n^{\frac{1}{D+\alpha}} \leq d(0,Y_n) \leq C_1(\log n)^{\beta}n^{\frac{1}{D+\alpha}}  \Big] \geq 1-\frac{C_1}{\log n}.
		\]
		
		%		
		%		\item[(c)]  Let $\widetilde{Y}_n=\max\{d(0,Y_k)\colon 0\le k\le n\}$. Then there exists $N_0$ such that $P^0(N_0<\infty)=1$ and 
		%		\[
		%		c_6^{-1}(\log n)^{-\beta_4}n^{\frac{1}{D+\alpha}}\leq  	\widetilde{Y}_n\leq c_6(\log n)^{\beta_4}n^{\frac{1}{D+\alpha}},\quad n\ge N_0. 
		%		\]
		%		\item[(d)] There exists $R_0$ 
		%		such that $P^0(R_0<\infty)=1$
		%		and 
		%		\[
		%		c_7^{-1}(\log R)^{-\beta_5} R^{D+\alpha} \leq \tau_R\leq 	c_7(\log R)^{\beta_5} R^{D+\alpha},\quad R\ge R_0. 
		%		\]
		%		\item[(e)]The expected exit time satisfies
		%		\[
		%		c_8^{-1}(\log R)^{-\beta_6} R^{D+\alpha} \leq E^{0}(\tau_R)\leq 	c_8(\log R)^{\beta_6} R^{D+\alpha},\quad R\ge 2. 
		%		\] 
		%		
		%		\item[(f)] The range $W_n:=\{Y_0,\ldots,Y_n\}$ satisfies
		%		\[
		%		\lim_{n\to\infty}\frac{\log \sum_{x\in W_n}\deg(x)}{\log n}=\frac{D}{D+\alpha}\,\,\,\,\,P^0\textnormal{-a.s.}
		%		\]
	\end{enumerate}
\end{theorem}

Before proving \cref{thm: KM2008 deterministic setting}, let us first derive \cref{thm: spectral dimension} and \cref{thm: diffusive property} from it. 

\begin{proof}[Proof of \cref{thm: spectral dimension} and \cref{thm: diffusive property}]
	Since $M$ is a tree,
	\[
	|B_M(\emptyset,r)|\leq V(\emptyset,r)=\sum_{x\in B_M(\emptyset,r)}\deg(x)\leq 2|B_{M}(\emptyset,r+1)|.
	\] 
	By \cref{thm: improved bounds on the volume} we can conclude that, almost surely, $(M,\emptyset)$ satisfies condition \eqref{eq:volume growth assumption in adaption of KM2008} with $D=3$ and $\eta=18$. Since $\reff(\emptyset,y)\le d(\emptyset,y)$,  condition \eqref{eq:two-point effective resistance assumption in KM2008} holds with $\alpha=1$ and $\eta=0$. Moreover, by Proposition~\ref{prop: effective resistance lower bound}, condition \eqref{eq:effective resistance assumption in adaption of KM2008} holds with $\alpha=1$ and $\eta=8$. As a result, \cref{thm: spectral dimension} and \cref{thm: diffusive property} can be directly inferred from \cref{thm: KM2008 deterministic setting}.
	% Note that if condition \eqref{eq:volume growth assumption in adaption of KM2008} is satisfied for $\eta$, it will also hold for any $\eta'\geq \eta$. The same applies to conditions \eqref{eq:two-point effective resistance assumption in KM2008} and \eqref{eq:effective resistance assumption in adaption of KM2008}. Therefore, almost surely, $M$ satisfies the conditions of \cref{thm: KM2008 deterministic setting} are satisfied with $\eta=18$, $D=3$, and $\alpha=1$. 
\end{proof}

%\begin{remark}\label{rem: sharp exponent on logarithm}
%	Although explicit bounds can be found for the exponents $\beta_i$'s in terms of $m_1,m_2,m_3$ and $\alpha$ within the proof of \cref{thm: KM2008 deterministic setting} in the appendix, we do not aim to optimize them. Given the bounds presented in \eqref{eq:volume growth assumption in adaption of KM2008}, \eqref{eq:two-point effective resistance assumption in KM2008}, and \eqref{eq:effective resistance assumption in adaption of KM2008}, the results and methods derived from \cite{Kumagai_Misumi2008} may not provide sharp estimates on these $\beta_i$'s. However, if one has precise knowledge of the order of the logarithmic corrections for the volume and effective resistance growth, it is possible to obtain sharp estimates of the exponents in the logarithmic corrections for random walk asymptotics as well. For an example of such precision, refer to the recent work \cite{Halberstam2022logarithmic} dealing with the  uniform spanning tree model on $\mathbb{Z}^4$.
%\end{remark}

\begin{proof}[Proof of \cref{thm: KM2008 deterministic setting}] The proof is an application of various estimates in \cite{Kumagai_Misumi2008}. The proof of the upper bound on $p_{2n}(0,0)$ is an immediate application of Proposition 3.1(a) of \cite{Kumagai_Misumi2008} with $R=\lfloor (2n)^{\frac{1}{D+\alpha}}\rfloor$; note that in our setting $v(R)=R^D$, $r(R)=R^{\alpha}$ and $\lambda=C(\log R)^{\eta}$, moreover, $\mathcal{I}(n)$ of \cite{Kumagai_Misumi2008} simply equals $n^{\frac{1}{D+\alpha}}$. The assumptions of Proposition 3.1(a) of \cite{Kumagai_Misumi2008} (which are (3.1) of that paper) are contained in the assumptions of \cref{thm: KM2008 deterministic setting} and so the upper bound on $p_{2n}(0,0)$ follows.
	%:
	%\[
	%p_{2n}(0,0)\leq f_{2n}(0)\leq C(\log R)^{\eta}n^{-\frac{D}{D+\alpha}}\leq C'(\log n)^{\eta}n^{-\frac{D}{D+\alpha}}. 
	%\]
	
	To prove the lower bound on $p_{2n}(0,0)$ we apply Proposition 3.2(b) of \cite{Kumagai_Misumi2008}. There are more parameters to set and conditions to verify than previously. First note that in our setting we have that $C_2$ and $\alpha_1$ defined in (1.12) of \cite{Kumagai_Misumi2008} can be taken to be $1$ and $\alpha$ respectively. Next we take $\lambda=m=C(\log R)^{\eta}$ and $\varepsilon = (4m\lambda)^{-1/\alpha}$ so that in particular $\varepsilon \leq {1 \over C(\log R)^\eta}$. These choices guarantee that (3.3) of \cite{Kumagai_Misumi2008} holds as well as $\reff(0,y)\leq \lambda r(d(0,y))$ for all $y\in B(R)$ again due to the assumptions of our theorem. Then (3.6) of \cite{Kumagai_Misumi2008} states that there exists $c_1>0$ such that for $n\le \frac{R^\alpha V'}{8m}$, 
	\be\label{eq: transition density lower bound before rescaling}
	p_{2n}(0,0)\geq \frac{c_1(V')^2}{m^2\lambda^2V^3}, 
	\ee
	where $V'=V\big(\frac{1}{2}\varepsilon R\big)$ and $V=V(R)$. 
	Since 
	\be\label{eq:V'}
	V'\stackrel{\eqref{eq:volume growth assumption in adaption of KM2008}}{\ge} C^{-1}\big(\log (\frac{\varepsilon R}{2})\big)^{-\eta}\big(\frac{1}{2}\varepsilon R\big)^{D} \ge c_{2}(\log R)^{-\eta-\frac{2\eta D}{\alpha}}R^D 
	\ee
	and 
	\be\label{eq:V}
	V\stackrel{\eqref{eq:volume growth assumption in adaption of KM2008}}{\le}C(\log R)^{\eta}R^D,
	\ee
	we have for $n =\lfloor \frac{c_2}{8C}(\log R)^{-2\eta-\frac{2\eta D}{\alpha}} R^{D+\alpha}\rfloor \stackrel{\eqref{eq:V'}}{\le}\frac{R^\alpha V'}{8m}$, 
	\[
	p_{2n}(0,0)\ge \frac{c_1(V')^2}{m^2\lambda^2V^3}\stackrel{\eqref{eq:V'},\eqref{eq:V}}{\ge} 
	c_3 (\log R)^{-9\eta-\frac{4\eta D}{\alpha}} R^{-D}.
	\]
	Set $R$ so that $n=\lfloor \frac{c_2}{8C}(\log R)^{-2\eta-\frac{2\eta D}{\alpha}} R^{D+\alpha}\rfloor $ and get the desired lower bound on $p_{2n}(0,0)$.

	Finally for the displacement  bounds, we will apply Proposition~3.5 (b) and (c) of \cite{Kumagai_Misumi2008}. Again in our setting $v(R)=R^D$ and $r(R)=R^{\alpha}$ and the exponent $d_1$ defined in (1.12) of \cite{Kumagai_Misumi2008} can be taken to be $D$. Moreover, for $\lambda>1$, the set  $J(\lambda)$ in  Definition 1.1 of \cite{Kumagai_Misumi2008} becomes
	\begin{align*}
		J(\lambda)=&\Big\{ R\ge 1\colon \lambda^{-1}R^D\le V(R)\leq \lambda R^D, \reff\big(0,B(R)^c\big)\ge \lambda^{-1}R^{\alpha},\\ 
		&\,\,\,\,\, \reff(0,y)\le \lambda d(0,y)^{\alpha},\forall\, y\in B(R)\Big\}.
	\end{align*}
	We take $\lambda=C(\log R)^{\eta}$ as before. The conditions in \cref{thm: KM2008 deterministic setting} guarantee that $R\in J(\lambda)$, and $cR\in J(\lambda)$ for any $c\in(2/R,1)$.
	Note that the constants $c_i(\lambda)$'s in \cite[Proposition 3.5]{Kumagai_Misumi2008} have the form $c_i(\lambda)=c_i\lambda^{\pm q_i}$ for some positive universal constants $c_i$ and $q_i$ (see \cite[first paragraph in page~925, above Proposition 3.3]{Kumagai_Misumi2008}). Hence if we set the parameter $M$ in \cite[Proposition~3.5 (b)]{Kumagai_Misumi2008} to be $M=(\log n)^{\beta}$ for some  $\beta>0$, then for $R=M\mathcal{I}(n)=(\log n)^{\beta}n^{\frac{1}{D+\alpha}}$ and $\lambda=C(\log R)^{\eta}$, we can choose $\beta$ sufficiently large\footnote{In this paragraph we assume $n\ge3$ so that $\log n>1$.} so that   $R,c_5(\lambda)R/M,c_6(\lambda)R/M\in J(\lambda)$ and $M^{\alpha}\ge c_7(\lambda) \log n$. Therefore the upper bound on the displacement follows directly from Proposition~3.5 (b)  of \cite{Kumagai_Misumi2008}. Similarly for $R=n^{\frac{1}{D+\alpha}}$ and $\lambda=C(\log R)^{\eta}$ if we set the parameter $\theta=(\log n)^{-\beta}$ for some sufficiently large $\beta$ so that $c_8(\lambda)\theta^D\leq \frac{C_1}{\log n}$, then the lower bound on the displacement follows directly from Proposition~3.5 (c)  of \cite{Kumagai_Misumi2008}.
\end{proof}

\section{A local limit theorem for minimal spanning trees}\label{sec: local limit thm for MSTs}

Our goal in this section is to prove the following local limit theorem for minimal spanning trees. 
An analogous theorem about uniform spanning trees, with a different limiting object, can be found in  \cite[Theorem 1.1]{Nachmias_Peres2021local_limit}. 
\begin{theorem}\label{thm: unweight local limit for regular graph sequence}
	Let $\{G_n\}$ be a sequence of finite, simple, connected, regular graphs with degree tending to infinity and let $\{W_n(e)\}_{e\in G_n}$ be i.i.d~uniform$[0,1]$ weights. Let $M_n$ be the minimal spanning tree of $(G_n,W_n)$ viewed as an unweighted graph. Then the local limit of $M_n$ is the component of the root $\emptyset$ in the WMSF on the PWIT. 
\end{theorem}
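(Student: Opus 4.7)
The plan is to upgrade the weighted local weak convergence of \cite[Theorem 5.4]{Aldous_Steele2004objective_method} to (unweighted) local weak convergence of $M_n$ to $M$. That theorem, applied to $(G_n, d_n W_n)$ with $d_n$ the common degree, yields that for each fixed $w < \infty$, the rooted subgraph $M_n^{[w]} := M_n \cap \{e : d_n W_n(e) \leq w\}$ at a uniform vertex $V_n$ converges in the ordinary Benjamini--Schramm sense to its limiting counterpart $M^{[w]} := M \cap \{e : W(e) \leq w\}$ rooted at $\emptyset$. It therefore suffices to show that for every $r \geq 1$ and $\epsilon > 0$ there exists $w = w(r,\epsilon) < \infty$ such that, with probability at least $1 - \epsilon$, one has simultaneously $B_M(\emptyset, r) = B_{M^{[w]}}(\emptyset, r)$ and $B_{M_n}(V_n, r) = B_{M_n^{[w]}}(V_n, r)$ for all sufficiently large $n$.

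The limit-side bound is straightforward. By Theorem \ref{thm: improved bounds on the volume} the ball $B_M(\emptyset, r)$ is almost surely finite, and by the invasion-percolation plus PGWA construction the joint law of the edge weights inside this ball is absolutely continuous; hence the maximum weight in the ball is almost surely finite and in particular tight, which produces the required $w$.

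For the finite side, I plan to argue directly on $(G_n, W_n)$. At each vertex of $G_n$ the incident edge weights are i.i.d.\ $\mathrm{Unif}[0,1]$, so after rescaling they form an approximate rate-one Poisson process, mirroring the PWIT. The key observation is that an edge $e$ of rescaled weight $\tau$ in $G_n$ belongs to $M_n$ iff its two endpoints lie in distinct components of the subgraph $G_n^{<\tau}$ of edges with rescaled weight strictly less than $\tau$. Since $d_n \to \infty$ and $G_n$ is regular, standard results on random sparsifications of regular graphs (which are locally Erd\H{o}s--R\'enyi-like) show that as $\tau \to \infty$ the endpoints of such an edge lie in the same component of $G_n^{<\tau}$ with probability tending to $1$, uniformly in $n$. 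Combining this decay with a union bound over the vertices of $B_{M_n^{[w]}}(V_n, r)$, whose size is tight in $n$ by Aldous--Steele's convergence, yields that $B_{M_n}(V_n, r) = B_{M_n^{[w]}}(V_n, r)$ with probability $\geq 1 - \epsilon$ for $w$ and $n$ large. The main obstacle will be making this uniform-in-$n$ tail bound on ``heavy edges belonging to $M_n$ near $V_n$'' rigorous for general regular $G_n$; I expect to handle it via expansion properties of random subgraphs of $G_n$ at supercritical edge densities, together with the already-available tightness of $|B_{M_n^{[w]}}(V_n,r)|$.
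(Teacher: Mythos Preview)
Your overall reduction is sound and matches the paper's: one must show that for each $r$ and $\varepsilon$ there is $w$ so that, uniformly in $n$, the graph-metric ball $B_{M_n}(V_n,r)$ coincides with $B_{M_n^{[w]}}(V_n,r)$ with probability $\ge 1-\varepsilon$. Your observation that weighted convergence of $(M_n,\ell_n)$ already yields unweighted Benjamini--Schramm convergence of $M_n^{[w]}$ for each fixed $w$ (since a graph path of length $r$ in $M_n^{[w]}$ has weighted length $\le rw$) is correct and is a clean way to set things up; the limit side is also fine.

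The gap is the ``finite side''. You propose to show that an edge of rescaled weight $\tau$ belongs to $M_n$ with probability $o_\tau(1)$ uniformly in $n$ via ``expansion properties of random subgraphs of $G_n$ at supercritical edge densities''. No such uniform statement is available for \emph{arbitrary} simple connected regular graphs: $G_n$ may have very sparse cuts, and $G_n^{<\tau}$ need not connect the two endpoints of a specified edge with probability close to $1$. In addition, the vertices of $B_{M_n^{[w]}}(V_n,r)$ are not uniformly distributed (they are biased by their own ball size), so your union bound needs further justification even granting a pointwise estimate. The paper avoids all of this by exploiting two further outputs of Aldous--Steele's theorem that you do not use: $N_n\stackrel{d}{\to}N$ and, crucially, the convergence of the \emph{weighted} root degree $L_n=\sum_{e\ni V_n}\ell_n(e)\stackrel{d}{\to}L$. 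Tightness of $\{L_n\}$ says directly, and uniformly in $n$, that the root $V_n$ is unlikely to be incident to any $M_n$-edge of weight $\ge T$. A one-line mass-transport (send mass from $u$ to $v$ when $d_W(u,v)\le t$ and $v$ touches a heavy edge, restricted to $u$ with bounded $|B^W(u,2t)|$) then upgrades this to: with high probability no vertex of $B_{M_n}^W(V_n,t)$ touches a heavy edge. Iterating $r$ times gives $B_{M_n}(V_n,r)\subset B_{M_n}^W(V_n,T)$ uniformly in $n$, which is exactly the tightness you need. Replacing your random-subgraph expansion step by this $L_n$-tightness plus mass-transport argument would close the gap without any structural hypothesis on $G_n$.
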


Theorem \ref{thm: unweight local limit for regular graph sequence} is about \emph{unweighted} graphs and we will deduce it via a weighted version due to Aldous and Steele \cite{Aldous_Steele2004objective_method}. The proof shows under which conditions one can deduce unweighted convergence from weighted convergence. This  does not hold always, and certain natural conditions are necessary and sufficient, see \cref{thm:WeightedUnweighted}.  %not trivial one has to find the right conditions to avoid some pathologies which make this statement false. 

%is not trivial though and we provide its full details below. 

\subsection{Rooted weighted/unweighted graphs, their topology and local limits}

The topology of rooted graphs and local limits was introduced by Benjamini and Schramm \cite{Benjamini_Schramm2001local_limit}.
Given a locally finite graph $G=(V,E)$ and a vertex $\rho\in V$, the pair $(G,\rho)$ is a {\bf rooted graph}. Two rooted graphs $(G_1,\rho_1)$ and $(G_2,\rho_2)$ are equivalent if there is a graph isomorphism $\phi:V(G_1)\to V(G_2)$ which also preserves the root. Let $\Gb$ be the set of equivalent classes of locally finite rooted graphs viewed up to root-preserving graph isomorphisms. When discussing a rooted graph $(G,\rho)$ we always mean its equivalence class in $\Gb$. %and its equivalence class. 	

Next we define a metric on $\Gb$. The distance between two rooted graphs $(G_1,\rho_1)$ and $(G_2,\rho_2)$ is given by 
\[
\dist\big((G_1,\rho_1),(G_2,\rho_2)\big):=\frac{1}{1+\alpha},
\]
where $\alpha$ is the supremum of those integer $r>0$ such that there is a root-preserving isomorphism between the two rooted balls $\big(B_{G_1}(\rho_1,r),\rho_1\big)$ and $\big(B_{G_2}(\rho_2,r,\rho_2\big)$; and $\alpha=\infty$ if the two rooted graphs are equivalent. This metric induces a Borel $\sigma$-algebra on $\Gb$ and makes it a Polish space (see \cite{AL2007}). Hence one can talk about convergence in distribution for Borel probability measures on $\Gb$.

\begin{definition}[Local limit]\label{def: local limit}
	A random element $(G,\rho)$ of $\Gb$ is called the \emph{local limit} of a sequence of finite (possibly random) graphs $(G_n)_{n\geq1}$ if for any integer $r>0$, the random rooted graphs $\big(B_{G_n}(\rho_n,r),\rho_n\big)$ converge in distribution to $\big(B_G(\rho,r),r\big)$, where   $\rho_n$ is a uniformly chosen vertex from $V(G_n)$ that is chosen independently of each other conditioned on $(G_n)_{n\geq1}$. We denote this by
	$$ G_n \stackrel[n \to \infty]{d}{\longrightarrow} (G,\rho) \, .$$
\end{definition}

% \subsection{Minimal spanning tree, local limit and local weak limit}
% We first review several key concepts needed for Theorem \ref{thm: unweight local limit for regular graph sequence} and its proof: minimal spanning tree, local limit and local weak limit.
% \subsubsection{Minimal spanning tree}
% Given a finite, connected graph $G=(V,E)$ and an injective function $l:E\to(0,\infty)$, there is a unique spanning tree $T_l$ of $G$ that has  the minimal edge-label sum, $\sum_{e\in E(T_l)}l(e)$  among  all spanning trees of $G$ (\cite[Exercise 11.1]{LP2016}). We call the tree $T_l$ the  \textit{minimal spanning tree} (MST) of the weighted graph $(G,l)$  and  write it  as $\mathrm{MST}(G,l)$. 
% \begin{definition}[Minimal spanning tree]\label{def: MST}
% 	For a finite, connected graph $G=(V,E)$, let $\langle l(e);e\in E\rangle$ be i.i.d.\ uniform $[0,1]$ random variables. Almost surely $l$ is an injective function and the law of  the  associated minimal spanning tree $T_l$  is called the \emph{minimal spanning tree (measure)} on $G$.
% \end{definition}

% It is well-known that in Definition \ref{def: MST} one can use any continuous distribution on $(0,\infty)$ to replace the  uniform $[0,1]$ distribution without changing the resulting minimal spanning tree measure (hint: using Kruskal's algorithm \cite{Kruskal1956}). 

% \subsubsection{Local weak limit}
% Local weak limit is defined for weighted graphs and the space of weighted graphs we shall work on is the family of geometric graphs in Definition \ref{def: geometric graphs}. 

We now turn to discuss weighted graphs. Given a connected graph $G=(V,E)$ and a weight function $l:E\to(0,\infty)$ on its edges, we let $d_W:V\times V\to (0,\infty)$ be the weighted distance, namely, 
\[
d_W(u,v):=\inf\Big\{\sum_{e\in \gamma}l(e)\colon \gamma \textnormal{ is a path between }u \textnormal{ and }v \Big\}.
\] 
\nomenclature[dweight]{$d_W(u,v)$}{The weighted graph distance on a weighted graph}
We also write 	\[
B_G^W(v,r):=\{ x\colon d_W(v,x)\leq r \}
\]
for a ball of radius $r$, centered at $v$ with respect to this weighted distance $d_W$. We often view $B_G^W(v,r)$ as the induced subgraph in $G$ of the vertex set $\{ x\colon d_W(v,x)\leq r \}$. 
\begin{definition}[Definition 2.1 in \cite{Aldous_Steele2004objective_method}; Geometric graphs]
	\label{def: geometric graphs}
	If $G=(V,E)$ is a connected, undirected graph with a finite or countable infinite vertex set $V$ and if $\ell:E\to(0,\infty)$ is an edge weight function that makes $G$ \emph{locally finite} in the sense that for each vertex $v$ and each real $t>0$, the ball $B_G^W(v,t)$
	is finite, then $(G,\ell)$ is called a \emph{geometric graph}. When there is also a distinguished vertex $v$, we say that $(G,v,\ell)$ is a \emph{rooted geometric graph} with \emph{root} $v$. Two rooted geometric graphs are equivalent if there is a root-preserving isomorphism between the two rooted graphs which also preserves the edge weights.  Let  $\Gs$ be the set of equivalence classes of rooted geometric graphs. %As in the $\Gb$ case, we will usually not distinguish a rooted geometric graph $(G,v,l)$ from its equivalent class.  
	Lastly, for $(G,\rho,\ell)\in\Gs$, we say that $t>0$ is a \emph{continuity point} of $(G,\rho,\ell)$ if no vertex of $G$ is exactly at a weighted distance $t$ from the root $\rho$. 
	
\end{definition}

\begin{example}
	Although the vertex degrees of the Ulam--Harris tree $U$ defined in Section \ref{sec: intro: MSF on PWIT} are all infinite, the PWIT $(U,\emptyset,W)$ is a.s.~a geometric graph in the sense of Definition \ref{def: geometric graphs}. 
\end{example}

Next we define a topology on $\Gs$. Suppose $(G_n,\rho_n,\ell_n)_{n\geq1}$ and $(G_\infty,\rho,\ell)$ are elements in $\Gs$.
We say that $(G_n,\rho_n,\ell_n)$ converges to $(G_\infty,\rho,\ell)$ if for each continuity point $t$ of $(G_\infty,\rho,\ell)$, there is $n_0$ such that for all $n\geq n_0$ there exists a root-preserving isomorphism $\phi_n$ from $\big(B_{G_\infty}^W(\rho,t),\rho\big)$ to $\big(B_{G_n}^W(\rho_n,t),\rho_n\big)$  such that  $\ell_n\big(\phi_n(e)\big)\to \ell(e)$ for each edge $e$ of $B_{G_\infty}^W(\rho,t)$ as $n\to\infty$. It can easily be seen that this topology is metrizable and in fact makes $\Gs$ a Polish space. Hence one can talk about convergence in distribution of Borel probability measures on $\Gs$ (this convergence was referred as \emph{local weak convergence} in \cite{Aldous_Steele2004objective_method}).

\begin{definition}[local limit of weighted graphs]\label{def: local weighted limit}
	A random element $(G,\rho, \ell)$ of $\Gs$ is called the \emph{local limit} of a sequence of finite (possibly random) geometric graphs $(G_n, \ell_n)_{n\geq1}$ if the random rooted geometric graphs $(G_n, \rho_n, \ell_n)_{n\geq1}$ converge in distribution to $(G, \rho, \ell)$ where $\rho_n$ is a uniformly chosen vertex from $V(G_n)$ that is chosen independently of each other conditioned on $(G_n)_{n\geq1}$. We denote this by
	$$ (G_n, \ell_n) \stackrel[n \to \infty]{d}{\longrightarrow} (G,\rho,\ell) \, .$$
\end{definition}

%\begin{definition}[Definition 2.2 in \cite{Aldous_Steele2004objective_method}; convergence in $\Gs$]\label{def: convergence in G star}
% 	Suppose $(G_n,\rho_n,l_n)_{n\geq1}$ and $(G_\infty,\rho,l)$ are elements in $\Gs$.
% We say that $(G_n,\rho_n,l_n)$ converges to $(G_\infty,\rho,l)$, written as 
% \[
% (G_n,\rho_n,l_n)\stackrel{d}{\longrightarrow}(G_\infty,\rho,l),
% \]
%  if for each 
%\end{definition}

\begin{remark}Note that the local limit defined earlier on $\Gb$ can be viewed as a special case of the local limit on $\Gs$ if we view the unweighted graphs as  weighted graphs with weights $1$ on all the edges and pick the roots uniformly at random. Due to this and in order to avoid further terminology we refer to the convergence defined in \cref{def: local limit} and \cref{def: local weighted limit} as a \emph{local limit} but it will always be clear from the context (i.e., whether weights are present or not) which topology we work with. 
\end{remark}

\subsection{The mass-transport principle}
Similarly to $\Gs$, one can define the space $\Gss$ as equivalent classes of weighted graphs with an ordered pair of distinguished vertices. One can define a natural topology on $\Gss$ similar to $\Gs$. An useful technique for the proof of Theorem \ref{thm: unweight local limit for regular graph sequence} is the mass-transport principle. 

\begin{definition}[Definition 2.1 in \cite{AL2007} adapted to $\Gs$]
	Let $\mu$ be a probability measure on $\Gs$. We call $\mu$ \emph{unimodular} if it obeys the \emph{mass-transport principle}: For any Borel measurable function $f:\Gss\to[0,\infty]$, we have 
	\be\label{eq: MTP}
	\int \sum_{x\in V(G)} f(G,o,x)~\din \mu \big((G,o)\big)=\int \sum_{x\in V(G)} f(G,x,o)~\din \mu \big((G,o)\big).
	\ee
	%	where $[G,o]$ stands for the equivalence class of $(G,o)$. 
	If we view $f(G,o,x)$ as the amount of mass sent out from $o$ to $x$, then the equality \eqref{eq: MTP} is usually interpreted as the expected mass sent out by the root equals the expected mass received. 
\end{definition}

It is well known (and not difficult to prove) that if $(G,\rho,\ell)$ is a local limit of finite geometric graphs (as in \cref{def: local weighted limit}), then its law is unimodular. The latter fact in the unweighted case was first observed by Benjamini and Schramm \cite{Benjamini_Schramm2001local_limit}. We will use this fact in the proof of \cref{thm: unweight local limit for regular graph sequence}.

%If $(G_n,l_n)$ is a finite weighted graph and the root $\rho_n$ is drawn uniformly at random from $V(G_n)$, then $(G_n,\rho_n,l_n)$ is unimodular. If $\mu$ is the local weak limit of a sequence $(G_n,\rho_n,l_n)_{n\geq1}$, where each $G_n$ is a finite weighted graph and the root $\rho_n$ is uniformly distributed on $V(G_n)$ given $G_n$, then $\mu$ is also unimodular. 

\subsection{A theorem of Aldous and Steele \cite{Aldous_Steele2004objective_method}: local limit of MSTs}

Given a finite, connected graph $G=(V,E)$ and an injective weight function $\ell:E\to(0,\infty)$, there is a unique spanning tree $T_\ell$ of $G$ that has the minimal sum of weights, $\sum_{e\in E(T_\ell)}\ell(e)$ among all spanning trees of $G$ (\cite[Exercise 11.1]{LP2016}). We call the tree $T_\ell$ the  \textit{minimal spanning tree} (MST) of the weighted graph $(G,\ell)$  and  write it  as $\mathrm{MST}(G,\ell)$. Note that the MST may inherit the weights from $G$ so it can be viewed as an element of $\Gb$ or $\Gs$.

% When the weights $\ell : E\to(0,\infty)$ are given by and i.i.d.~uniform$(0,1)$ random variables, we say the law $T_\ell$ is the 
% \textit{minimal spanning tree measure} on $G=(V,E)$. Note that by rescaling we can define the MST measure using any uniform$(0,x)$ distribution. In fact by Kruskal's algorithm one can even define the MST measure using any continuous distribution on $(0,\infty)$. 

The following theorem due to Aldous and Steele \cite[Theorem 5.4]{Aldous_Steele2004objective_method} provides a ``continuity'' property for the MST, \emph{under the weighted topology} of $\Gs$, that holds in great generality. It roughly states that if $(G_n, \ell_n) \stackrel[n \to \infty]{d}{\longrightarrow} (G,\rho,\ell)$, then the corresponding MST of $(G_n,\ell_n)$ converges locally to the component of $\rho$ in the WMSF of $(G,\rho,\ell)$ (recall the definition of the WMSF in \cref{sec: intro: MSF on PWIT}).

\begin{theorem}[Aldous-Steele \cite{Aldous_Steele2004objective_method}]\label{thm: AS2004}
	Let $(G_\infty,\rho,\ell)$ denote a $\Gs$-valued random variable such that with probability one $G_\infty$ has infinitely many vertices and no two of the edges have the same weight.  Further, let $\big\{(G_n,\ell_n)\}_{n\geq 1}$ be a sequence of $\Gs$-valued random variables such that almost surely for each $n$ the graph $G_n$ is finite, $\ell_n$ is injective, and $|V(G_n)|\to \infty$. If 
	\be\label{eq: 5.2}
	(G_n,\ell_n)\stackrel[n \to \infty]{d}{\longrightarrow}(G_\infty,\rho,\ell) \, ,
	\ee
	then one has the joint weak convergence in $\Gs\times \Gs$
	\be\label{eq: 5.3}
	\Big((G_n,\ell_n),\big( \mathrm{MST}(G_n, \ell_n),\ell_n\big)\Big)\stackrel[n \to \infty]{d}{\longrightarrow}\Big((G_\infty,\rho,\ell),\big(\mathrm{WMSF}(G_\infty,\ell),\rho,\ell\big)\Big)\,.
	\ee
	
	\noindent Furthermore, if $N_n$ denotes the degree of the root in $ \mathrm{MST}(G_n, \ell_n)$ and $N$ denotes the degree of the root in  $\mathrm{WMSF}(G_\infty,\ell)$, then
	\be\label{eq: 5.4}
	N_n\stackrel{d}{\longrightarrow} N\,\,\,\textnormal{ and }\,\ \mathbb{E}[N_n]\to\mathbb{E}[N]=2 \, .
	\ee
	Lastly, if $L_n$ denotes the sum of lengths of the edges incident to the root of $ \mathrm{MST}(G_n, \ell_n)$ and $L$ denotes the corresponding quantity for $\mathrm{WMSF}(G_\infty,\ell)$, then
	\be\label{eq: 5.5}
	L_n\stackrel{d}{\longrightarrow}L\,\,\,\textnormal{ as }\,\,n\to\infty.
	\ee
\end{theorem}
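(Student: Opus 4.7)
The plan is to pass to almost-sure convergence via Skorokhod's representation theorem, then identify any subsequential $\Gs$-limit $W$ of $\mathrm{MST}(G_n,\ell_n)$ with $\mathrm{WMSF}(G_\infty,\ell)$ via a unimodularity sandwich. Since $\mathrm{MST}(G_n)\subseteq G_n$, the sequence $\big((G_n,\rho_n,\ell_n),(\mathrm{MST}(G_n),\rho_n,\ell_n)\big)$ is tight in $\Gs\times\Gs$; along any subsequence we may assume a.s.\ joint convergence to $\big((G_\infty,\rho,\ell),(W,\rho,\ell)\big)$. The random subgraph $W$ is a.s.\ a forest (absence of finite cycles is a local property preserved in the $\Gs$-limit), and the joint network $(G_\infty,\rho,W,\ell)$ is unimodular, since it is the local limit of uniformly rooted finite networks. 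The whole argument will hinge on proving (i) $W\supseteq\mathrm{WMSF}(G_\infty)$ a.s., and (ii) $\mathbb{E}[\deg_\rho W]=\mathbb{E}[\deg_\rho\mathrm{WMSF}(G_\infty)]=2$. Mass-transport at the root then forces the expected excess-edge count to vanish, and unimodularity upgrades this to $W=\mathrm{WMSF}(G_\infty)$ a.s.; uniqueness of subsequential limits then yields \eqref{eq: 5.3}.

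For (i) the key input is the inclusion criterion: $e=(u,v)\in\mathrm{WMSF}(G_\infty)$ iff $u,v$ lie in distinct components of $G_{\infty,\le\ell(e)}\setminus\{e\}$ and at least one, say $C_u$, is finite (equivalently, the invasion cluster from $u$ selects $e$ immediately after exhausting $C_u$). Crucially, $C_u$ together with all its weight-$\le\ell(e)$ incident edges is contained in some weighted ball $B^W_{G_\infty}(\rho,T)$. Picking $T$ to be a continuity point just beyond this radius, the a.s.\ $\Gs$-isomorphism $\phi_n$ on $B^W_{G_n}(\rho_n,T)$ transfers both the component structure and the ordering of weights into $G_n$, so that $\phi_n(e)\in\mathrm{MST}(G_n)$ for $n$ large and therefore $e\in W$ in the limit.

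For (ii), from $|E(\mathrm{MST}(G_n))|=|V(G_n)|-1$ and the uniform root $\rho_n$ we have $\mathbb{E}[\deg_{\rho_n}\mathrm{MST}(G_n)]=2-2/|V(G_n)|\to 2$; joint $\Gs$-convergence gives $\deg_{\rho_n}\mathrm{MST}(G_n)\to\deg_\rho W$ in distribution, and uniform integrability of $(N_n)$ (addressed below) upgrades this to $\mathbb{E}[\deg_\rho W]=2$. The corresponding identity for $\mathrm{WMSF}(G_\infty)$ comes from the unimodular-forest mass-transport identity $\mathbb{E}[\deg_\rho F]=2-2\mathbb{E}[1/|C_\rho(F)|]$: invasion clusters in an infinite geometric graph are a.s.\ infinite, so all WMSF components are infinite and the right-hand side equals $2$. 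The conclusions \eqref{eq: 5.4}--\eqref{eq: 5.5} then follow from $\Gs$-continuity of the functionals ``degree of root'' and ``sum of weights incident to root'', with the UI above providing the expectation convergence in \eqref{eq: 5.4}; the identity $\mathbb{E}[N]=2$ emerges as a byproduct of (ii).

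The main obstacle is the uniform integrability of $N_n=\deg_{\rho_n}\mathrm{MST}(G_n)$. The naive bound $N_n\le\deg_{G_n}(\rho_n)$ is useless because ambient vertex degrees can diverge in the $\Gs$-limit (e.g.\ infinitely in the PWIT). What is needed is a direct tail estimate $\mathbb{P}[N_n>k]\le f(k)$ with $f$ summable and $n$-independent. A natural route is through the Prim/invasion-percolation description: $N_n$ is controlled by the number of invasion clusters passing through $\rho_n$ at their earliest steps, which under i.i.d.\ continuous weights is expected to admit a universal subexponential tail uniform in the ambient finite graph $G_n$. Once UI is in hand, the mass-transport cleanup completes the proof.
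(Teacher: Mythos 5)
This statement is quoted from Aldous--Steele and is not proved in the paper, so I am comparing your proposal against the original argument. Your skeleton --- tightness and Skorokhod, the cut/inclusion criterion showing any subsequential limit $W$ contains $\mathrm{WMSF}(G_\infty,\ell)$, and an expected-degree-two sandwich closed by mass transport --- is exactly the Aldous--Steele route, and steps (i) and the unimodularity bookkeeping are sound.

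However, there is a genuine gap at the point you yourself flag as the main obstacle, and the obstacle is misdiagnosed. You do not need uniform integrability of $N_n$ to establish $\mathbb{E}[\deg_\rho W]=2$: Fatou's lemma (applied in the a.s.\ coupling, where $\liminf_n N_n\ge \deg_\rho W$ because every finite collection of $W$-edges at the root lies in some weighted ball) gives $\mathbb{E}[\deg_\rho W]\le\liminf_n\mathbb{E}[N_n]=2$ from the exact count $\mathbb{E}[N_n]=2-2/|V(G_n)|$, while the reverse inequality comes from $W\supseteq\mathrm{WMSF}$ together with $\mathbb{E}[\deg_\rho\mathrm{WMSF}]=2$. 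That closes the sandwich with no UI input. Worse, the resolution you propose --- a subexponential tail for the MST root degree ``uniform in the ambient finite graph $G_n$'' --- is false: take $G_n$ to be a connected graph in which a $1/(k+1)$-fraction of vertices are centres of stars $K_{1,k}$ forced into the MST; then $\mathbb{P}[N_n\ge k]\ge 1/(k+1)$, which is not summable, and no graph-independent summable tail exists. Since you explicitly leave this step as ``expected to admit'' such a bound, the proof is incomplete exactly where you declare the main difficulty to lie. Relatedly, your justification of \eqref{eq: 5.4}--\eqref{eq: 5.5} by ``$\Gs$-continuity of the functionals degree-of-root and weight-at-root'' is not valid: these functionals are \emph{not} continuous on $\Gs$, since root edges of diverging weight escape every weighted ball (this is precisely the phenomenon behind the examples following \cref{thm:WeightedUnweighted}). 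The correct way to rule out escaping mass is again the exact identity: once $W=\mathrm{WMSF}$ is known, $\mathbb{E}[N_n]-\mathbb{E}[N_n^{(\le t)}]\to 2-\mathbb{E}[N^{(\le t)}]\to 0$ as $t\to\infty$, where $N_n^{(\le t)}$ counts root edges of weight at most $t$; this yields both $N_n\stackrel{d}{\to}N$ and $L_n\stackrel{d}{\to}L$ and, as a by-product, the convergence of expectations in \eqref{eq: 5.4}.
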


% \begin{definition}[Minimal spanning tree]\label{def: MST}
% 	For a finite, connected graph $G=(V,E)$, let $\langle l(e);e\in E\rangle$ be i.i.d.\ uniform $[0,1]$ random variables. Almost surely $l$ is an injective function and the law of  the  associated minimal spanning tree $T_l$  is called the \emph{minimal spanning tree (measure)} on $G$.
% \end{definition}

% It is well-known that in Definition \ref{def: MST} one can use any continuous distribution on $(0,\infty)$ to replace the  uniform $[0,1]$ distribution without changing the resulting minimal spanning tree measure (hint: using Kruskal's algorithm \cite{Kruskal1956}). 

%\subsection{Proof of Theorem \ref{thm: unweight local limit for regular graph sequence} }
\subsection{From weighted to unweighted local convergence}

Our goal in this section is to show that under the same setting and assumptions as \cref{thm: AS2004} one can additionally conclude convergence under the \emph{unweighted topology}, i.e., that of $\Gb$. %that 
%$$ \mathrm{MST}(G_n,\ell_n) \stackrel[n \to \infty]{d}{\longrightarrow} (\mathrm{WMSF}(G_\infty,\ell),\rho) \, ,$$
%in other words, that the convergence takes place also as \emph{unweighted} graphs, as in \cref{def: local limit}. 

\begin{theorem}\label{thm:WeightedUnweighted} Assume that $(H_n, \ell_n)$ are finite (possibly random) geometric graphs and that $(H, \rho, \ell)$ is a $\Gs$-valued random variable such that
	$$ (H_n, \ell_n) \stackrel[n \to \infty]{d}{\longrightarrow} (H,\rho,\ell) \, .$$
	Furthermore, let $\rho_n$ be a uniformly chosen vertex of $H_n$ and write $N_n$ and $N$ for the degree of $\rho_n$ and of $\rho$ respectively. Denote by $L_n$ and $L$ the weighted degrees, that is, $L_n = \sum_{e : \rho_n \in e} \ell_n(e)$ and $L = \sum_{e: \rho\in e} \ell(e)$. If 
	$$ N_n \stackrel{d}{\longrightarrow} N \quad \mathrm{and} \quad L_n \stackrel{d}{\longrightarrow} L \quad \mathrm{and} \quad N<\infty \,\, \mathrm{a.s.}\, ,$$
	then 
	$$ H_n \stackrel[n \to \infty]{d}{\longrightarrow} (H,\rho) \, .$$
\end{theorem}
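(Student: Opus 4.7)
The plan is to establish the convergence $(H_n,\rho_n) \to (H,\rho)$ in $\Gb$ by reducing it, via the weighted convergence, to a tightness statement controlling how graph-distance balls sit inside weighted balls; the hypotheses $N_n\to N$ and $L_n\to L$ are used to propagate local finiteness from the root to its neighborhood in the limit and, more delicately, in the pre-limit.

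The first step is to establish local finiteness in the limit. Since $(H,\rho,\ell)$ is a local limit of finite graphs with uniform roots, it is unimodular and the mass-transport principle applies. Taking $f(G,o,x)=\mathbf{1}_{o\sim x}\mathbf{1}_{N(x)=\infty}$, the identity $\E[\sum_x f(G,o,x)]=\E[\sum_x f(G,x,o)]$ yields
\[
\E\bigl[\#\{u\sim \rho:N(u)=\infty\}\bigr]=\E\bigl[N(\rho)\mathbf{1}_{N(\rho)=\infty}\bigr]=0,
\]
because $N<\infty$ a.s. Iterating, every vertex at finite graph-distance from $\rho$ has finite degree a.s., and an analogous argument using $L<\infty$ a.s. gives finite weighted degree. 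In particular $B_H(\rho,r)$ is a.s.\ finite and $M_r:=\max\{d^W_H(\rho,v):v\in B_H(\rho,r)\}<\infty$ a.s., so for any $\eps>0$ there is a continuity point $t=t(r,\eps)$ of the weighted limit with $\pr[B_H(\rho,r)\subseteq B^W_H(\rho,t)]>1-\eps$.

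The second step is the analogous statement for the pre-limit: for every $r$ and $\eps>0$ there exists $t$ with $\limsup_n \pr[B_{H_n}(\rho_n,r)\not\subseteq B^W_{H_n}(\rho_n,t)]<\eps$. This is the main obstacle. I would do it by induction on $r$. The case $r=1$ is immediate since $d^W_{H_n}(\rho_n,u)\le \ell_n(\rho_n u)\le L_n$ for any $u\sim\rho_n$, and $L_n\to L$ with $L<\infty$ a.s.\ gives tightness of $L_n$. For the inductive step, observe that if every $v\in B_{H_n}(\rho_n,r)$ has weighted degree at most $s$ and the $r$-ball already sits inside $B^W_{H_n}(\rho_n,t)$, then the $(r{+}1)$-ball sits inside $B^W_{H_n}(\rho_n,t+s)$. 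Thus it suffices to control $\max_{v\in B_{H_n}(\rho_n,r)}L(v)$. Applying the mass-transport principle on the finite graph $H_n$ with $f(G,o,x)=\mathbf{1}_{d(o,x)\le r}\mathbf{1}_{L(x)>K}$ gives
\[
\E\bigl[\#\{v\in B_{H_n}(\rho_n,r):L(v)>K\}\bigr]=\E\bigl[\mathbf{1}_{L_n>K}\,|B_{H_n}(\rho_n,r)|\bigr],
\]
and the plan is to bound the right-hand side by splitting on the tightness event $\{|B_{H_n}(\rho_n,r)|\le M\}$ (guaranteed by the induction hypothesis via $|B_{H_n}(\rho_n,r)|\le |B^W_{H_n}(\rho_n,t)|$, whose tightness is transferred from the limit by the weighted convergence) and on the event $\{L_n>K\}$, whose probability can be made arbitrarily small by tightness of $L_n$. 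The delicate part is that I do not have uniform integrability of $|B_{H_n}(\rho_n,r)|$ for free; the argument must exploit that the weighted $t$-balls are already tight in size (by weighted convergence together with $(H,\rho,\ell)\in\Gs$), so that conditioning on the high-probability containment event in the inductive hypothesis replaces the uniform-integrability requirement.

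Finally, granted the tightness of step two, I would conclude by continuous mapping. Fix a bounded continuous $F:\Gb\to\R$ depending only on the $r$-ball. For a continuity point $t>M_r$, the previous two steps imply that with probability at least $1-2\eps$, both $B_H(\rho,r)$ and $B_{H_n}(\rho_n,r)$ (for all $n$ large) are contained in their respective weighted $t$-balls. On this event the isomorphism $\phi_n:B^W_H(\rho,t)\to B^W_{H_n}(\rho_n,t)$ provided by the weighted convergence identifies $B_H(\rho,r)$ with $B_{H_n}(\rho_n,r)$ as rooted graphs (the forward inclusion being automatic from $\phi_n$ preserving adjacency along geodesics, and the backward inclusion following because any geodesic of length $\le r$ from $\rho_n$ stays inside $B^W_{H_n}(\rho_n,t)$ once the ball is contained there). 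Therefore $\E[F(B_{H_n}(\rho_n,r),\rho_n)]\to \E[F(B_H(\rho,r),\rho)]$ up to $O(\eps)$, which gives $(H_n,\rho_n)\stackrel{d}{\longrightarrow}(H,\rho)$ in $\Gb$.
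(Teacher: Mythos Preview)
Your overall strategy matches the paper's: establish local finiteness of the limit via mass-transport, show that unweighted $r$-balls are contained in weighted $T$-balls uniformly in $n$ with high probability, and then transfer the weighted isomorphism. Step~1 and Step~3 are fine and essentially identical to the paper's Lemmas and final argument.

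The gap is in Step~2, precisely at the point you flag as ``delicate.'' Your mass-transport with $f(G,o,x)=\mathbf{1}_{d(o,x)\le r}\mathbf{1}_{L(x)>K}$ gives
\[
\E\bigl[\#\{v\in B_{H_n}(\rho_n,r):L(v)>K\}\bigr]=\E\bigl[\mathbf{1}_{L_n>K}\,|B_{H_n}(\rho_n,r)|\bigr],
\]
and you propose to bound the right-hand side by splitting on $\{|B_{H_n}(\rho_n,r)|\le M\}$. But this leaves the term $\E[\,|B_{H_n}(\rho_n,r)|\,\mathbf{1}_{|B_{H_n}(\rho_n,r)|>M}\,]$, and tightness of $|B_{H_n}(\rho_n,r)|$ (which is all the inductive hypothesis gives you) does not make this small --- that genuinely is uniform integrability, and ``conditioning on the high-probability containment event'' does not replace it, because the transport identity is an equality and you cannot insert an indicator on only one side.

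The fix, which is exactly what the paper does, is to build the size constraint into the transport function itself. The paper sends unit mass from $u$ to $v$ only if, in addition, $|B^W_{H_n}(u,2t)|\le M$ (working with weighted balls throughout rather than your graph-distance induction). Then the mass entering $\rho_n$ is automatically bounded by $M\cdot\mathbf{1}_{L_n\ge T}$, giving expected incoming mass at most $M\,\pr(L_n\ge T)$; meanwhile the mass leaving $\rho_n$ is at least the indicator of the bad event intersected with $\{|B_n^W(2t)|\le M\}$, and the latter event has high probability by tightness. In your framework, the analogous fix is to use $f(G,o,x)=\mathbf{1}_{d(o,x)\le r}\mathbf{1}_{L(x)>K}\mathbf{1}_{|B(o,2r)|\le M}$: then on the right-hand side each contributing $o$ has $\rho_n\in B(o,2r)$ with $|B(o,2r)|\le M$, so there are at most $M$ such $o$'s, and you get $\le M\,\pr(L_n>K)$ directly without any integrability issue. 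With this modification your inductive argument goes through.
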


The proof of \cref{thm:WeightedUnweighted} is postponed to the next section. Next, we provide a (probably well-known) lemma showing that in the setting of \cref{thm: unweight local limit for regular graph sequence}, the weighted local limit of the ambient graph $(G_n, \ell_n)$ is the PWIT. This was known for the complete graph on $n$ vertices with exponential distributed weights \cite[Theorem 4.1]{Aldous_Steele2004objective_method}. 

%or \cite[Section 1.1]{Addario_Griffiths_Kang2012invasion_percolation_PWIT}. The argument in \cite{Addario_Griffiths_Kang2012invasion_percolation_PWIT} works with minor modifications in our setting, we briefly repeat their proof for completeness.

\begin{lemma}\label{lem: convergence to PWIT by weighted regular graph sequences}
	Suppose $\{G_n\}$ is a sequence of finite, simple, connected, regular graphs with degree $\deg(G_n)\to\infty$. 
	For each $n$, assign i.i.d.\ weights $\ell_n$ to the edges of $G_n$ using uniform distribution on $[0,\deg(G_n)]$.
	%	\begin{itemize}
	%		\item uniform distribution on $[0,\deg(G_n)]$;
	%		
	%		\item exponential distribution with mean $\deg(G_n)$.
	%	\end{itemize}
	Draw the root $\rho_n$ uniformly at random from $V(G_n)$ and independently of each other. 
	Then the random rooted weighted graphs $(G_n,\rho_n,\ell_n)$ converge in distribution to the PWIT.
\end{lemma}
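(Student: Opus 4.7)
The plan is to show, for each continuity point $t > 0$ of the PWIT, the convergence in distribution of the rooted weighted ball $(B_{G_n}^W(\rho_n, t), \rho_n, \ell_n)$ to $(B_U^W(\emptyset, t), \emptyset, W)$; by the definition of the topology on $\Gs$ this suffices to establish the lemma.

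To do this I would carry out a breadth-first exploration of the ball $B_{G_n}^W(\rho_n, t)$ in increasing order of weighted distance, revealing at each vertex $v$ only its edges whose weight is at most $t - d_W(\rho_n, v)$. At the root, the $d_n := \deg(G_n)$ edge weights are i.i.d.\ $\mathrm{Uniform}[0, d_n]$, so the number of edges with weight at most $t$ is $\mathrm{Binomial}(d_n, t/d_n) \to \mathrm{Poisson}(t)$ as $n\to\infty$, and conditional on this count the positions are i.i.d.\ uniform on $[0, t]$. This matches exactly the law of a rate-one Poisson point process on $[0, t]$, which is the distribution of the children of the root of the PWIT within weighted distance $t$. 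The same statement holds at any newly revealed vertex $v$ at weighted distance $w < t$: of its remaining $d_n - 1$ edges, the number with weight at most $t - w$ is $\mathrm{Binomial}(d_n - 1, (t-w)/d_n) \to \mathrm{Poisson}(t-w)$ with i.i.d.\ uniform positions on $[0,t-w]$, independently of what has happened so far.

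The only obstruction to matching the PWIT exactly is that a revealed edge at a vertex $v$ could lead to a previously explored vertex, creating a cycle. To handle this, I would first establish tightness of the total number $N_n(t)$ of vertices in the ball: by stochastic domination of the exploration vertex-by-vertex with a branching process of offspring law $\mathrm{Binomial}(d_n, t/d_n)$, one obtains $\mathbb{E}[N_n(t)] \leq e^t$ uniformly in $n$ (this is exactly the expected size of the PWIT ball), so for each $\varepsilon > 0$ there is $K = K(t, \varepsilon)$ with $\mathbb{P}[N_n(t) > K] \leq \varepsilon$ for all $n$. Conditionally on $\{N_n(t) \leq K\}$, the probability that any revealed edge of weight at most $t$ joins two already-explored vertices is, by a union bound over the at most $K^2$ ordered pairs and the $\mathrm{Uniform}[0, d_n]$ weight distribution, at most $O(K^2 t / d_n)$, which tends to zero. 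Hence with probability $1 - \varepsilon - o(1)$ the ball is a tree whose offspring weight structure, by the distributional convergence noted above, matches that of $B_U^W(\emptyset, t)$ in $\Gs$.

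The main subtlety is that the exploration in $G_n$ is not literally a branching process, because the $d_n - 1$ unexplored edges at an internal vertex can land on a previously visited vertex; this is what forces the two-step argument of first bounding the ball size and then ruling out cycles. Once this is in place, the remaining ingredient is the standard coupling of uniform order statistics on $[0, t]$ with the points of a rate-one Poisson process on $[0, t]$, which lets one promote the marginal Binomial-to-Poisson convergence at each explored vertex to a joint convergence of the entire weighted ball in $\Gs$.
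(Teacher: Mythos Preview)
Your approach is essentially the paper's: both rest on the convergence of uniform order statistics to a rate-one Poisson process together with a union bound to rule out short cycles. The paper organizes this a bit differently, exploring a fixed minimal $a$-ary subtree of depth $b$ (rather than the whole weighted ball) and then showing that every cycle through the root has weight exceeding $T$; your direct breadth-first exploration of $B_{G_n}^W(\rho_n,t)$ is arguably more streamlined, but the content is the same.

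Two steps need tightening. First, your tightness justification as written fails: a Galton--Watson tree with offspring law $\mathrm{Binomial}(d_n,t/d_n)$ has mean offspring $t$, so for $t>1$ it is supercritical and its total progeny has infinite expectation; it cannot give $\mathbb{E}[N_n(t)]\le e^t$. What you actually need is the \emph{inhomogeneous} process in which a particle at weighted distance $w$ produces $\mathrm{Binomial}(d_n,(t-w)/d_n)$ children placed uniformly on $[w,t]$; then the expected size of generation $k$ is $t^k/k!$, summing to $e^t$. (Equivalently, count paths: there are at most $d_n^k$ paths of graph-length $k$ from $\rho_n$, each having total weight $\le t$ with probability $t^k/(k!\,d_n^k)$.) Second, conditioning on $\{N_n(t)\le K\}$ destroys the uniform law of the remaining edge weights, so the bound $O(K^2 t/d_n)$ is not immediate under that conditioning. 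The fix is to avoid conditioning: write
\[
\mathbb{P}[\text{cycle in the ball}]\le \mathbb{P}[N_n(t)>K]+\mathbb{P}[\text{a collision occurs among the first }K\text{ explored vertices}],
\]
and bound the second probability by $K^2 t/d_n$ using that, at the moment each new edge is examined in the truncated exploration, its weight is still unconditionally $\mathrm{Uniform}[0,d_n]$. With these two corrections your argument goes through.
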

\begin{proof}[Proof of Lemma \ref{lem: convergence to PWIT by weighted regular graph sequences}] 
	For positive integers $a,b\geq1$, let $\mathscr{N}_{a,b}(U,\emptyset,W)$ be the minimal $a$-ary subtree of height $b$ in the PWIT $(U,\emptyset,W)$; i.e., it is the finite weighted tree with vertex set $\{\emptyset\}\cup \{(i_1,\ldots,i_k)\colon 1\leq i_j\leq a \textnormal{ and }k\leq b\}$. For a regular graph $(G,\rho,\ell)\in \Gs$ with distinct edge weights and degree $\deg(G)\geq a^{b+1}>1+a+\cdots+a^b$, one can define a similar 
	minimal $a$-ary subtree $\mathscr{N}_{a,b}(G,\rho,\ell)$ of height $b$ inside $(G,\rho,\ell)$ with vertex set $\{v_\emptyset \}\cup \{v_{i_1,\ldots,i_k}\colon k\leq b,1\leq i_j\leq a\}$ determined by the following exploration process:
	\begin{enumerate}
		\item Initially let $v_\emptyset:=\rho$, the set of explored vertices $\mathscr{E}$  be empty and the set of active vertices $\mathscr{A}=\{v_\emptyset\}$.
		
		\item Explore the edges incident to $v_\emptyset$ in increasing order of weights and stop after exploring the first $a$ such edges. Let $v_1,\ldots,v_a$ be the other endpoints of these edges in that order. Add $v_1,\ldots,v_a$ to $\mathscr{A}$ and $\mathscr{E}$. Then delete $v_\emptyset$ from $\mathscr{A}$.

		\item If $\mathscr{A}$ is not empty, let $v_{i_1,\ldots,i_k}$ be the  vertex in $\mathscr{A}$ with the smallest index in the lexicographic order. If $k\geq b$, then we stop the entire exploration process. Otherwise, explore the edges from  $v_{i_1,\ldots,i_k}$ to $V(G)\setminus \mathscr{E}$ in  increasing order of weights and stop after exploring $a$ such edges. Let $v_{i_1,\ldots,i_k,1},\cdots,v_{i_1,\ldots,i_k,a}$ be the other endpoints in that order and add these $a$ vertices to $\mathscr{A}$ and $\mathscr{E}$. Delete $v_{i_1,\ldots,i_k}$ from $\mathscr{A}$. Repeat this step until we stop the entire exploration process. 
		
	\end{enumerate}
	We write $\Phi_n:\mathscr{N}_{a,b}(U,\emptyset,W)\to \mathscr{N}_{a,b}(G_n,\rho_n,\ell_n)$ for the natural map given by $\Phi_n(\emptyset)=\rho_n$ and $\Phi_n\big((i_1,\ldots,i_k)\big)=v_{i_1,\ldots,i_k}$. We first observe that for any integers $a,b\geq1$, with probability $1-o(1)$ one can couple $\mathscr{N}_{a,b}(G_n,\rho_n,\ell_n)$ and $\mathscr{N}_{a,b}(U,\emptyset,W)$ so that $\big|W(e)-\ell_n(\Phi_n(e)) \big|=o(1)$ for all $e\in \mathscr{N}_{a,b}(U,\emptyset,W)$. Indeed, this follows immediately from an observation from \cite[Section 1.1]{Addario_Griffiths_Kang2012invasion_percolation_PWIT}: if $\mathbf{X}=\big(X_1^*,\ldots,X_n^*\big)$ denotes the order statistics of $n$ independent uniform$[0,n]$ random variables, then the vector $\big(X_1^*,\ldots,X_{\lfloor \sqrt{n} \rfloor }^*\big)$ has total variation distance $O(n^{-1/2})$ (we only need here that it is $o(1)$) from the first $\lfloor \sqrt{n} \rfloor$ points of a homogeneous rate one Poisson process on $[0,\infty)$.
	%	 i.e., 
	%	\[
	%	C_n:=\min_{C}\left\{ \sum_{e\in C}\ell_n(e)\colon C \textnormal{ is a cycle of }G_n \textnormal{ that contains the root }\rho_n   \right\}.
	%	\]	
	Secondly we show that for any $T>0$, one has that 
	\be\label{eq:nolightcycles}
	\lim_{n\to\infty}\mathbb{P}(C_n\leq T)=0,
	\ee
	where $C_n$ is the minimal weighted length of a cycle containing the root. Indeed, by the first observation we may choose $a,b$ large enough so that with probability at least $1-\eps$, any path from $\rho_n$ to the leaves of the tree $\mathscr{N}_{a,b}(G_n,\rho_n,\ell_n)$ has weight at least $T$ and any edge not belonging to $\mathscr{N}_{a,b}(G_n,\rho_n,\ell_n)$ that has precisely one vertex in $\mathscr{N}_{a,b}(G_n,\rho_n,\ell_n)$ that is \emph{not} a leaf (at height $b$) has weight at least $T$. Furthermore, conditioned on $\mathscr{N}_{a,b}(G_n,\rho_n,\ell_n)$, any edge of $G_n$ between two vertices of this tree that is not in the tree has not exposed its weights so the probability that it has weight at least $T$ is $1-T/\deg(G_n)$. Since there are only finitely many such edges ($G_n$ is simple), depending only on the choice of $a$ and $b$, we can also deduce that with probability at least $1-\eps$ all such edges have weight at least $T$. 
	
	On these two events, any cycle containing $\rho_n$ must have weight at least $T$. Indeed, either the cycle contains a path from $\rho_n$ to the leaves of $\mathscr{N}_{a,b}(G_n,\rho_n,\ell_n)$ or it ``exits'' $\mathscr{N}_{a,b}(G_n,\rho_n,\ell_n)$ before reaching the leaves and therefore must traverse an edge with weight at least $T$. This establishes \eqref{eq:nolightcycles} and  together with the observation preceding it our proof is concluded.
\end{proof}

With these three ingredients in place (\cref{thm: AS2004}, \cref{thm:WeightedUnweighted} and Lemma \ref{lem: convergence to PWIT by weighted regular graph sequences}) our main result \cref{thm: unweight local limit for regular graph sequence} easily follows.

\begin{proof}[Proof of \cref{thm: unweight local limit for regular graph sequence}]
	Since multiplying a same constant to all the edge weights does not change the  MST, we can use i.i.d.\ uniform$[0,\deg(G_n)]$ weights $\ell_n$ to sample $M_n$.
	Lemma \ref{lem: convergence to PWIT by weighted regular graph sequences} immediately shows that assumption \eqref{eq: 5.2} holds. The other assumptions of \cref{thm: AS2004} hold almost surely by definition. We therefore obtain that \eqref{eq: 5.3}, \eqref{eq: 5.4} and \eqref{eq: 5.5} hold. Thus the assumptions of \cref{thm:WeightedUnweighted} hold with $(H_n, \ell_n) = \mathrm{MST}(G_n, \ell_n)$. The conclusion of \cref{thm:WeightedUnweighted} finishes the proof.
\end{proof}

\subsection{Proof of Theorem \ref{thm:WeightedUnweighted}}
Before the proof, let us consider some examples showing that the conditions of the theorems are necessary and that one cannot simply argue by ``forgetting'' the weights. 

\begin{example}[Unimodularity is necessary] Consider two disjoint copies of the complete graph on $n$ vertices connected by a single edge. Put i.i.d.~weights of exponential distribution with mean $n$ on all the edges. If we root the graphs in one of the two vertices of the special edge connecting the two complete graphs, then the unweighted local limit seen from this vertex is two disjoint independent WMSF on the PWIT with their roots connected by an edge. The weighted local limit, however, is just a copy of the WMSF on the PWIT, since the special edge has weight of order $n$ with high probability. 
\end{example}

\begin{example}[$L_n \stackrel{d}{\longrightarrow} L$ is necessary] Consider two disjoint cycles of length $n$ connected by a matching; put i.i.d.~uniform $[0,1]$ weights on the two cycles, and weight $n$ on the edges of the matching. The weighted local limit is then clearly a copy of $\mathbb{Z}$ with i.i.d.~uniform $[0,1]$ weights while the unweighted limit is two copies of $\mathbb{Z}$ connected by a matching.
\end{example}

For simplicity we use the following notations throughout the proof of \cref{thm:WeightedUnweighted}:
$$B_n(r):=B_{H_n}(\rho_n,r)\, , \enspace B_n^W(t):=B_{H_n}^W(\rho_n,t)\, , \enspace B_\infty(r):=B_{H}(\rho,r)\, , \enspace B_\infty^W(t):=B_{H}^W(\rho,t) \, ,
$$
where we use the letter $r$ for the the unweighted distance and $t$ for the weighted one.

The overall strategy is simple: we will show that with high probability unweighted balls of radius $r$ are contained in weighted balls of some large weighted radius $T$. By the weighted convergence, with high probability $B_n^W(T)$ is isomorphic to $B_\infty^W(T)$ (with close edge weights) which implies that the unweighted balls are isomorphic as well. Technically it will amount to a few applications of the mass-transport principle \eqref{eq: MTP}. We first prove some preparatory lemmas.%show that $H$ is a.s.~locally finite as an unweighted graph.
\begin{lemma}\label{lem: locally finite of the limiting graph}
	All vertex degrees of the limiting graph $H$ in Theorem \ref{thm:WeightedUnweighted} are a.s.~finite. 
\end{lemma}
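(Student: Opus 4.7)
The plan is to exploit the unimodularity of the law of $(H, \rho, \ell)$. Since each $(H_n, \ell_n)$ is finite with a uniformly chosen root, its law on $\Gs$ is unimodular, and unimodularity passes to the weak limit; hence the mass-transport principle \eqref{eq: MTP} is available for $(H, \rho, \ell)$.

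For each integer $k \geq 1$ I will consider the Borel measurable transport
\[
f_k(G, o, x) := \mathbf{1}\{d_G(o, x) = k\} \cdot \mathbf{1}\{\deg_G(x) = \infty\},
\]
with the convention that $\mathbf{1}\{\deg_G(x) = \infty\}$ vanishes whenever $\deg_G(x)$ is finite. Summing over $x$ one gets
\[
\sum_{x} f_k(H, \rho, x) = \big|\{x \in V(H) : d_H(\rho, x) = k, \, \deg_H(x) = \infty\}\big|
\]
and
\[
\sum_{x} f_k(H, x, \rho) = \mathbf{1}\{\deg_H(\rho) = \infty\} \cdot \big|S_H(\rho, k)\big|,
\]
where $S_H(\rho, k)$ denotes the set of vertices at graph distance exactly $k$ from $\rho$. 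Because $\deg_H(\rho) = N < \infty$ almost surely by hypothesis, the second expression vanishes almost surely under the convention $0 \cdot \infty = 0$, so its expectation is zero. The mass-transport principle then forces the expectation of the first sum to be zero as well, whence almost surely no vertex at graph distance exactly $k$ from $\rho$ has infinite degree.

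Only the component of $\rho$ is visible in the local-limit topology, so we may restrict attention to that component, in which every vertex lies at some finite distance $k$ from $\rho$. Taking a countable union over $k \geq 0$, where the base case $k = 0$ reduces to the hypothesis $N < \infty$, yields that almost surely every vertex of $H$ has finite degree. I do not expect a serious obstacle here: the only delicate points are the transfer of unimodularity to the limit and the use of $[0, \infty]$-valued transports in \eqref{eq: MTP}, both of which are standard.
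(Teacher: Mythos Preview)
Your proof is correct and uses the same mass-transport idea as the paper: route mass toward vertices of infinite degree, observe that the root receives nothing since $N<\infty$, and conclude via \eqref{eq: MTP}. The only difference is cosmetic: the paper sends unit mass from every vertex to every infinite-degree vertex in one shot, while you stratify by the graph-distance $k$ and then take a countable union; both arguments go through for the same reason.
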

\begin{proof}
	Consider the mass transport in which
	each vertex of $H$ sends unit mass to every vertex of $H$ of infinite degree. Since $N<\infty$, the expected mass entering the root is zero. By the mass-transport principle \eqref{eq: MTP} the expected mass sent from the root is also zero, so there are no vertices of infinite degree almost surely.
\end{proof}

\begin{lemma} \label{lem:BallsCannotTouchLargeEdges} For any $\eps>0$ and $t>0$ there exists $T=T(\eps,t)\geq t$ such that for all $n\geq 1$ the probability that some vertex of $B_n^W(t)$ is incident to an edge with weight at least $T$ is at most $\eps$. 
\end{lemma}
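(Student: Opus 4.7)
The plan is to show that the real-valued random variables $Y_n := \max\{\ell_n(e) : e \text{ incident to } B_n^W(t)\}$ form a tight family in $n$; since each $H_n$ is a.s.~finite each individual $Y_n$ is a.s.~finite, so the content is the uniformity in $n$.

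First I would establish the analogous finiteness in the limit: $Y_\infty := \max\{\ell(e) : e \text{ incident to } B_\infty^W(t)\}$ is a.s.~finite. Indeed, \cref{lem: locally finite of the limiting graph} gives that all vertices of $H$ have a.s.~finite degree, and since $H$ is a geometric graph, $B_\infty^W(t)$ is a.s.~a finite vertex set. Consequently only finitely many edges are incident to $B_\infty^W(t)$ a.s., so $Y_\infty<\infty$ a.s.

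Next, I would apply the mass-transport principle to the unimodular law of $(H_n,\rho_n,\ell_n)$ (unimodular because $\rho_n$ is uniform on $V(H_n)$) with transport function $f(G,o,x) = \mathbf{1}\{x\in B_G^W(o,t),\ x \text{ is incident to an edge of weight}\geq T\}$ to obtain
\[
\mathbb{P}(Y_n\geq T) \;\leq\; \mathbb{E}\bigl[\#\{v\in B_n^W(t) : v\text{ is incident to an edge of weight}\geq T\}\bigr] \;=\; \mathbb{E}\bigl[\mathbf{1}\{\rho_n \text{ is incident to an edge of weight}\geq T\}\cdot |B_n^W(t)|\bigr].
\]
The indicator is dominated by $\mathbf{1}\{L_n\geq T\}$, since having an incident edge of weight $\geq T$ forces $L_n\geq T$. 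Both factors now admit tightness statements: $\{L_n\}$ is tight by the hypothesis $L_n\stackrel{d}{\to}L$ with $L<\infty$ a.s., while $\{|B_n^W(t)|\}$ is tight because the weighted convergence gives $|B_n^W(t)| \stackrel{d}{\to} |B_\infty^W(t)|$ (at a continuity radius just above $t$, dominating) which is a.s.~finite.

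To combine the two tightness properties into smallness of the expected product, I would truncate:
\[
\mathbb{E}\bigl[\mathbf{1}\{L_n\geq T\}\,|B_n^W(t)|\bigr] \;\leq\; K\,\mathbb{P}(L_n\geq T) + \mathbb{E}\bigl[|B_n^W(t)|\,\mathbf{1}\{|B_n^W(t)|>K\}\bigr],
\]
choose $K$ so that the second summand is at most $\eps/2$ uniformly in $n$, then choose $T$ so that the first is at most $\eps/2$ uniformly in $n$ and take $T\geq t$. Finitely many small $n$ for which the uniform estimate does not yet apply are handled individually, using $Y_n<\infty$ a.s.~on the finite graph $H_n$ to find an $n$-specific threshold, and taking $T$ to be the maximum. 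The hard part will be upgrading the tightness of $|B_n^W(t)|$ to the uniform-integrability-type control needed for the second summand; I expect this to follow from a further application of the mass-transport principle, leveraging the joint distribution of $(L_n,|B_n^W(t)|)$ and the hypothesis $L_n\to L$ in distribution.
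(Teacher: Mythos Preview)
Your overall strategy---mass-transport plus tightness of $L_n$ and of $|B_n^W(t)|$---is exactly the paper's, and your transport identity
\[
\mathbb{E}\big[\#\{v\in B_n^W(t):\ v\text{ touches an edge of weight}\geq T\}\big]
=\mathbb{E}\big[\mathbf{1}\{L_n\geq T\}\cdot |B_n^W(t)|\big]
\]
is correct. The gap is precisely where you flagged it: the truncation
\[
\mathbb{E}\big[\mathbf{1}\{L_n\geq T\}\,|B_n^W(t)|\big]\;\leq\;K\,\mathbb{P}(L_n\geq T)+\mathbb{E}\big[|B_n^W(t)|\,\mathbf{1}\{|B_n^W(t)|>K\}\big]
\]
requires uniform integrability of $\{|B_n^W(t)|\}_n$, and you only have tightness. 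Tightness does \emph{not} imply uniform integrability, and no further application of the mass-transport principle will manufacture a first-moment bound on $|B_n^W(t)|$ from the stated hypotheses: any natural transport whose out-mass is $|B_n^W(t)|\mathbf{1}\{|B_n^W(t)|>K\}$ has in-mass bounded only by $|B_n^W(t)|$ again, so you go in circles. The ``joint distribution of $(L_n,|B_n^W(t)|)$'' does not help either, since the two factors need not be negatively associated.

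The fix---and this is the paper's key device---is to avoid uniform integrability altogether by building the truncation \emph{into the transport function}. Send mass $1$ from $u$ to $v$ if $d_W(u,v)\leq t$, $v$ touches an edge of weight $\geq T$, \emph{and} $|B_{H_n}^W(u,2t)|\leq M$. Then any receiver $\rho_n$ has at most $M$ senders (since each sender $u$ lies in $B_{H_n}^W(\rho_n,t)\subset B_{H_n}^W(u,2t)$), so the expected in-mass is at most $M\,\mathbb{P}(L_n\geq T)$. The expected out-mass is at least $\mathbb{P}\big(|B_n^W(2t)|\leq M\text{ and some }v\in B_n^W(t)\text{ touches an edge of weight}\geq T\big)$. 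Now only tightness of $|B_n^W(2t)|$ and of $L_n$ is needed: choose $M$ so that $\mathbb{P}(|B_n^W(2t)|>M)\leq\eps/2$ uniformly in $n$, then $T$ so that $M\,\mathbb{P}(L_n\geq T)\leq\eps/2$ uniformly in $n$. Replacing your inequality chain with this modified transport closes the argument completely.
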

\begin{proof} 
	By Skorohod's representation theorem (\cite[Theorem 3.1.1]{Skorokhod1956limit} or \cite[Theorem 6.7]{Bill99}), we may suppose the $\Gs$-valued r.v.'s $ \big(H_n,\rho_n,\ell_n\big)$ and $\big(H,\rho,\ell\big)$ are all defined on a common probability space and that almost surely
	\[
	\big(H_n,\rho_n,\ell_n\big)\to \big(H,\rho,\ell\big) \textnormal{ as }n\to\infty.
	\]
	Since the set of non-continuity points for a fixed realization of $\big(H,\rho,\ell\big)$ has no accumulation points, one can pick a random and measurable continuity point $T=T(\omega) \geq 2t$ of $\big(H,\rho,\ell\big)$ (for instance, if $2t$ is a continuity point, then let $T=2t$, otherwise let $T$ be the average of $2t$ and the smallest non-continuity point larger than $2t$). By the definition of convergence in $\Gs$ there exists a random integer $K=K(\omega)>0$ such that for all $n\geq K$ there is a root-preserving isomorphism $\phi_n$ from  $B_{\infty}^W(T)$ onto $B_{n}^W(T)$. Hence the sequence  $\{|B_n^W(2t)|\}_{n\geq 1}$ is bounded by a sequence of random variables $\{|B_n^W(T)|\}_{n\geq 1}$ that almost surely converges to  $|B_{\infty}^W(T)|<\infty$. Thus $\{|B_n^W(2t)|\}_{n\geq 1}$ is a tight sequence, so there is some $M=M(\eps,t)$ such that $\sup_n \P(|B_n^W(2t)|> M)\leq \eps/2$. Similarly $\{L_n\}$ is tight so there exists $T=T(\eps,t)$ such that $\sup_n \P(L_n\geq T)\leq \eps/2M$.

	% {\color{red}
	% 	Since $\{|B_n^W(2t)|\}_{n\geq 1}$ converges in distribution to a random variable that is finite a.s., it is a tight sequence. (incorrect, to be deleted)\\
	%   Reason for this: $\{|B_n^W(2t)|\}_{n\geq 1}$ doesn't necessarily converge in distribution. For example, suppose $H=\mathbb{Z}$, $H_n=\{-n,\ldots,n\}$.  The weights $\ell(e)\equiv 1$ while $\ell_n(e)\equiv 1+\frac{1}{n}$ if $n$ is odd, and $\ell_n(e)\equiv 1-\frac{1}{n}$ if $n$ is even. Then for $t$ being a positive integer (non-continuity point for $(H,\rho_,\ell)$), $\{B_n^W(2t)\}$ oscillates between odd and even, hence doesn't converge. 
	% }	

	We now define a mass transport (note that here the mass-transport principle is unnecessary since the graphs are finite; we use the mass transport terminology instead of double counting for convenience). Each vertex $u$ sends unit mass to a vertex $v$ if $d_W(u,v) \leq t$ and $v$ is incident to an edge $e$ of weight $\ell_n(e) \geq T$ and $|B_{H_n}^W(u,2t)|\leq M$; otherwise zero mass is sent to $v$. Positive mass enters $\rho_n$ only when $L_n \geq T$ and only from vertices $u \in B_{H_n}^W(\rho_n,t)$ such that $|B_{H_n}^W(u,2t)|\leq M$. Hence the expected mass entering $\rho_n$ is at most $M \P( L_n \geq T)\leq \eps/2$. By the mass-transport principle \eqref{eq: MTP}, the expected mass sent from $\rho_n$ is at most $\eps/2$. On the other hand it is at least the probability that $|B_n^W(2t)|\leq M$ and that there exists a vertex of $B_n^W(t)$ that is incident to an edge of weight at least $T$. Since $\P(|B_n^W(2t)|\leq M)\geq 1-\eps/2$, we deduce that the probability that some vertex of $B_n^W(t)$ is incident to an edge with weight at least $T$ is at most $\eps$. 
\end{proof}
%The next lemma is the above  ``if" part. 
\begin{lemma}\label{lem: unweighted balls contained in large weighted balls}
	For any integer $r>0$ and $\varepsilon>0$, there exists a real number $T=T(r,\varepsilon)>0$ such that 
	$$\inf_n \mathbb{P}\big[  B_n(r)\subset B_n^W(T) \big]\geq 1-\varepsilon \, .
	$$
	%	where the containment is understood as induced subgraphs of $G_n$. 
\end{lemma}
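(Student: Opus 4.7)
My plan is to prove the lemma by induction on $r$, using Lemma \ref{lem:BallsCannotTouchLargeEdges} to propagate the containment one graph step at a time. For the base case $r=0$, one has $B_n(0) = \{\rho_n\} \subset B_n^W(0)$, so $T=0$ works for any $\eps$.

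For the inductive step, given $\eps>0$, I would first apply the inductive hypothesis with error $\eps/2$ to obtain $T_r = T(r,\eps/2)$ satisfying $\inf_n \mathbb{P}[B_n(r) \subset B_n^W(T_r)] \geq 1-\eps/2$, and then apply Lemma \ref{lem:BallsCannotTouchLargeEdges} with weighted radius $t=T_r$ and error $\eps/2$ to obtain $T'=T'(T_r,\eps/2)$ such that with probability at least $1-\eps/2$ no edge incident to a vertex of $B_n^W(T_r)$ carries weight $\geq T'$. On the intersection of these two events, whose probability is at least $1-\eps$, any vertex $v\in B_n(r+1)$ is either already in $B_n(r) \subset B_n^W(T_r)$, or is adjacent to some $u\in B_n(r) \subset B_n^W(T_r)$ via an edge of weight strictly less than $T'$; in the latter case
\[
d_W(\rho_n,v) \leq d_W(\rho_n,u) + \ell_n(u,v) < T_r + T'.
\]
Hence $B_n(r+1) \subset B_n^W(T_r + T')$, and setting $T_{r+1} := T_r + T'$ completes the induction.

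I do not foresee any substantive obstacle: the genuine probabilistic content is already isolated in Lemma \ref{lem:BallsCannotTouchLargeEdges} (which was itself proved via a mass-transport argument using tightness of $L_n$ and of $|B_n^W(2t)|$). The present inductive step merely upgrades a weighted-ball containment at weighted radius $T_r$ to the next unweighted level by absorbing a single bounded additional edge weight, so the only thing to track is that at each of the finitely many induction steps a bit of error budget $\eps/2$ is consumed.
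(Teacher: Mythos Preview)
Your proof is correct and follows essentially the same route as the paper: both iterate Lemma~\ref{lem:BallsCannotTouchLargeEdges} a total of $r$ times, starting from $T_0=0$, to propagate the containment one graph step at a time. The only cosmetic differences are that the paper splits the error budget as $\varepsilon/r$ per step rather than $\varepsilon/2$, and that your write-up is actually more explicit than the paper's in tracking that the final radius is the \emph{sum} $T_r + T'$ (the paper's terse phrasing ``$B_n(r)\subset B_n^W(T)$'' leaves this bookkeeping implicit).
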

\begin{proof} We apply  Lemma~\ref{lem:BallsCannotTouchLargeEdges} $r$ times (with $\eps = \varepsilon/r$) to obtain real numbers $T(0,\eps) \leq T(1,\eps)\leq \ldots \leq T(r,\eps)$ depending only on $r$ and $\eps$ such that $T(0,\eps)=0$ and with probability at least $1-\eps$, for all $i=0,\ldots,r-1$, the vertices of the ball $B_n^W(T(i,\eps))$ are not incident to any edge of weight at least $T(i+1,\eps)$. On this event, $B_n(r)\subset B_n^W(T(r,\eps))$. 
\end{proof}

% \begin{lemma}\label{lem: locally finite of the limiting graph}
% 	The limiting graph $H$ in Theorem \ref{thm:WeightedUnweighted} is almost surely locally finite as an unweighted graph.
% \end{lemma}
% \begin{proof}
% 	 Each vertex of $H$ sends unit mass to every vertex of $H$ of infinite degree (the weights of the edges incident to such a vertex must be summable since $H\in \Gs$). Since $N<\infty$, the expected mass entering the root is zero. By the mass-transport principle \eqref{eq: MTP} the expected mass sent from the root is also zero, so there are no vertices of infinite degree almost surely.
% \end{proof}

We now prove Theorem \ref{thm:WeightedUnweighted}. % assuming Lemma \ref{lem: unweighted balls contained in large weighted balls} for now.

\begin{proof}[Proof of \cref{thm:WeightedUnweighted}]
	%By definition of the local limit, all one needs to show is that for each integer $r\geq 1$, 
	%\be\label{eq: unweighted graph distance balls converge in distribution}
	%B_n(r)\stackrel{d}{\longrightarrow} B_{\infty}(r) \textnormal{ as }n\to\infty.
	%\ee	
	By Lemma \ref{lem: locally finite of the limiting graph} for any $r$ the ball $B_\infty(r)$ is almost surely finite. Hence the maximal weight of a simple path in the ball is an almost surely finite random variable, so for any $\varepsilon>0$ there exists $T_1=T_1(r,\varepsilon)<\infty$ such that
	\be\label{eq: choice of large T}
	\mathbb{P}\Big[B_{\infty}(r)\subset B_{\infty}^W(T_1) \Big]\geq 1-\varepsilon \, .
	\ee
	By Lemma \ref{lem: unweighted balls contained in large weighted balls} there exist $T_2=T_2(r,\eps)$ such that for all $n\geq 1$
	\be\label{eq: choice of large T_1}
	\mathbb{P}\Big[B_{n}(r)\subset B_{n}^W(T_2) \Big]\geq 1-\varepsilon \, .
	\ee
	
	%{\color{blue}
	%By Skorohod's embedding theorem, we may suppose the $\Gs$-valued r.v.'s $ \big(H_n,\rho_n,\ell_n\big)$ and $\big(H,\rho,\ell\big)$ are all defined on a common probability space and almost surely
	%\[
	%\big(H_n,\rho_n,\ell_n\big)\to \big(H,\rho,\ell\big) \textnormal{ as }n\to\infty.
	%\]
	%
	%Since the non-continuity points for a fixed realization of $\big(H,\rho,\ell\big)$ is countable, 
	
	Using Skorohod's representation theorem as in the proof of Lemma \ref{lem:BallsCannotTouchLargeEdges} again, one can pick a random and measurable continuity point $T_3=T_3(\omega) \geq \max(T_1,T_2)$ of $\big(H,\rho,\ell\big)$. By the definition of  convergence in $\Gs$ there exists a random integer $K_1=K_1(\omega)>0$ such that for all $n\geq K_1$ there is a root-preserving isomorphism $\phi_n$ from  $B_{\infty}^W(T_3)$ onto $B_{n}^W(T_3)$. Choose a large constant $K>0$ so that $\mathbb{P}[K_1\geq K]\leq \varepsilon$. Then, by \eqref{eq: choice of large T} and \eqref{eq: choice of large T_1}, for any $n\geq K$, with probability at least $1-3\eps$, the unweighted balls $B_{n}(r)$
	and $B_{\infty}(r)$ are subgraphs of $B_{n}^W(T_3)$ and $B_{\infty}^W(T_3)$, respectively. On this event, the restriction of the isomorphism $\phi_n$ gives an isomorphism from 
	$B_{\infty}(r)$ to $B_{n}(r)$. We conclude that for any $n \geq K$, the probability that $B_n(r)$ is isomorphic to $B_\infty(r)$ is at least $1-3\eps$, concluding the proof.%showing \eqref{eq: unweighted graph distance balls converge in distribution}. 
	%This coupling implies that the total variation distance between $B_{\infty}(r)$
	%and $B_{n}(r)$ is at most $3\varepsilon$ for $n\geq N\vee N'$. Hence one has the desired convergence in distribution \eqref{eq: unweighted graph distance balls converge in distribution}.
\end{proof}

\begin{acknowledgements} This research is supported by ERC consolidator grant 101001124 (UniversalMap), and by ISF grant 1294/19. 
	The authors would like to thank the referee for careful reading and many helpful comments. 
	
\end{acknowledgements}

\printnomenclature[2cm]

\bibliography{spec_diff_ref}
\bibliographystyle{alpha}

\end{document}